\theoremstyle{plain}
\newtheorem{theorem}[subsection]{Theorem}
\newtheorem{proposition}[subsection]{Proposition}
\newtheorem{corollary}[subsection]{Corollary}	
\newtheorem{lemma}[subsection]{Lemma}
\theoremstyle{definition}
\newtheorem{definition}[subsection]{Definition}
\theoremstyle{remark}
\newtheorem{remark}[subsection]{Remark}
\newtheorem{example}[subsection]{Example}
\newtheorem{examples}[subsection]{Examples}
\numberwithin{equation}{section}
\newenvironment{tfae}
{
	\begin{enumerate}}
	{\end{enumerate}}
\newcommand{\defn}{\textbf}
\newcommand{\noproof}{\hfil\qed}
\newcommand{\ot}{\leftarrow}
\newcommand{\tensor}{\otimes}
\newcommand{\C}{\ensuremath{\mathcal{C}}}
\newcommand{\F}{\mathbb{F}}         
\newcommand{\cV}{\mathcal{V}}
\newcommand{\E}{\mathcal{E}}
\newcommand{\N}{\mathbb{N}}
\newcommand{\V}{\mathcal{V}}
\newcommand{\Z}{\mathbb{Z}}
\DeclareMathOperator{\bchar}{char}
\DeclareMathOperator{\Ker}{Ker}
\DeclareMathOperator{\Aut}{Aut}
\DeclareMathOperator{\op}{op}
\DeclareMathOperator{\Act}{Act}
\DeclareMathOperator{\Hom}{Hom}
\DeclareMathOperator{\Bim}{Bim}
\DeclareMathOperator{\SplExt}{SplExt}
\DeclareMathOperator{\Imm}{Im}
\DeclareMathOperator{\End}{End}
\DeclareMathOperator{\Der}{Der}
\DeclareMathOperator{\ADer}{ADer}
\DeclareMathOperator{\Bider}{Bider}
\DeclareMathOperator{\Inn}{Inn}
\DeclareMathOperator{\M}{M}
\DeclareMathOperator{\USGA}{USGA}
\DeclareMathOperator{\Ann}{Ann}
\DeclareMathOperator{\FM}{FM}
\newcommand{\Gp}{\ensuremath{\mathbf{Grp}}}
\newcommand{\Set}{\ensuremath{\mathbf{Set}}}
\newcommand{\Rng}{\ensuremath{\mathbf{Rng}}}
\newcommand{\Mag}{\ensuremath{\mathbf{Mag}}}
\newcommand{\AAlg}{\ensuremath{\mathbf{Assoc}}}
\newcommand{\AntiAAlg}{\ensuremath{\mathbf{AAssoc}}}
\newcommand{\LieAlg}{\ensuremath{\mathbf{Lie}}}
\newcommand{\LeibAlg}{\ensuremath{\mathbf{Leib}}}
\newcommand{\PoisAlg}{\ensuremath{\mathbf{Pois}}}
\newcommand{\CPoisAlg}{\ensuremath{\mathbf{CPois}}}
\newcommand{\Bool}{\ensuremath{\mathbf{Bool}}}
\newcommand{\CAAlg}{\ensuremath{\mathbf{CAssoc}}}
\newcommand{\ACAntiAAlg}{\ensuremath{\mathbf{ACAAssoc}}}
\newcommand{\CAlg}{\ensuremath{\mathbf{Com}}}
\newcommand{\ACAlg}{\ensuremath{\mathbf{ACom}}}
\newcommand{\JJAlg}{\ensuremath{\mathbf{JJord}}}
\newcommand{\AbAlg}{\ensuremath{\mathbf{AbAlg}}}
\newcommand{\NAlg}{\ensuremath{\mathbf{Alg}}}
\newcommand{\PAlg}{\ensuremath{\mathbf{PAlg}}}
\newcommand{\Nil}{\ensuremath{\mathbf{Nil}}}
\newcommand{\Alt}{\ensuremath{\mathbf{Alt}}}
\newcommand{\Vect}{\ensuremath{\mathbf{Vec}}}
\newcommand{\SSS}{\ensuremath{\mathbb{S}}}
\newcommand{\SplExtX}{\ensuremath{\mathbf{SplExt}(X)}}
\begin{document}

\title[Weak Representability of Actions of Non-Associative Algebras]{Weak Representability of Actions\\of Non-Associative Algebras}

\author{J.~Brox}
\author{X.~García-Martínez}
\author{M.~Mancini}
\author{T.~Van der Linden}
\author{C.~Vienne}

\email{josebrox@uva.es}
\email{xabier.garcia.martinez@uvigo.gal}
\email{manuel.mancini@unipa.it; manuel.mancini@uclouvain.be}
\email{tim.vanderlinden@uclouvain.be; tim.van.der.linden@vub.be}
\email{corentin.vienne@uclouvain.be}

\address[Jose Brox]{Departamento de Álgebra, Análisis Matemático, Geometría y Topología, Universidad de Valladolid, Palacio de Santa Cruz, E--47002 Valladolid, Spain.}
\address[Xabier García-Martínez]{CITMAga \& Universidade de Vigo, Departamento de Ma\-temáticas, Esc.\ Sup.\ de Enx.\ Informática, Campus de Ourense, E--32004 Ourense, Spain.}
\address[Manuel Mancini]{Dipartimento di Matematica e Informatica, Universit\`a degli Studi di Palermo, Via Archirafi 34, 90123 Palermo, Italy.}
\address[Manuel Mancini, Tim Van der Linden, Corentin Vienne]{Institut de Recherche en Mathématique et Physique, Université catholique de Louvain, chemin du cyclotron 2 bte L7.01.02, B--1348 Louvain-la-Neuve, Belgium.}
\address[Tim Van~der Linden]{Mathematics and Data Science, Vrije Universiteit Brussel, Pleinlaan 2, B--1050 Brussel, Belgium}

\thanks{The first author is supported by a postdoctoral fellowship ``Convocatoria 2021'' funded by Universidad de Valladolid, and partially supported by grant PID2022-137283NB-C22 funded by MCIN/AEI/10.13039/501100011033 and ERDF ``A way of making Europe''. The second author is supported by Ministerio de Economía y Competitividad (Spain) with grant number PID2021-127075NA-I00. The third author is supported by the University of Palermo, by the ``National Group for Algebraic and Geometric Structures, and their Applications'' (GNSAGA -- INdAM), by the National Recovery and Resilience Plan (NRRP), Mission 4, Component 2, Investment 1.1, Call for tender No.~1409 published on 14/09/2022 by the Italian Ministry of University and Research (MUR), funded by the European Union -- NextGenerationEU -- Project Title Quantum Models for Logic, Computation and Natural Processes (QM4NP) -- CUP B53D23030160001 -- Grant Assignment Decree No.~1371 adopted on 01/09/2023 by the Italian Ministry of Ministry of University and Research (MUR), by the SDF Sustainability Decision Framework Research Project -- MISE decree of 31/12/2021 (MIMIT Dipartimento per le politiche per le imprese -- Direzione generale per gli incentivi alle imprese) -- CUP:~B79J23000530005, COR:~14019279, Lead Partner:~TD Group Italia Srl, Partner:~Università degli Studi di Palermo, and he is a postdoctoral researcher of the Fonds de la Recherche Scientifique--FNRS. The fourth author is a Senior Research Associate of the Fonds de la Recherche Scientifique--FNRS. The fifth author is supported by the Fonds Thelam of the Fondation Roi Baudouin.}

\begin{abstract}
We study the categorical-algebraic condition that \emph{internal actions are weakly representable} (WRA) in the context of varieties of (non-associative) algebras over a field.

Our first aim is to give a complete characterization of action accessible, operadic quadratic varieties of non-associative algebras which satisfy an identity of degree two and to study the representability of actions for them. Here we prove that the varieties of two-step nilpotent (anti-)commutative algebras and that of commutative associative algebras are weakly action representable, and we explain that the condition (WRA) is closely connected to the existence of a so-called \emph{amalgam}.

Our second aim is to work towards the construction, still within the context of algebras over a field, of a weakly representing object $\E(X)$ for the actions on (or split extensions of) an object $X$. We actually obtain a \emph{partial} algebra $\E(X)$, which we call \emph{external weak actor} of $X$, together with a monomorphism of functors ${\SplExt(-,X) \rightarrowtail \Hom(U(-),\E(X))}$, which we study in detail in the case of quadratic varieties. Furthermore, the relations between the construction of the \emph{universal strict general actor} $\USGA(X)$ and that of $\mathcal{E}(X)$ are described in detail. We end with some open questions.
\end{abstract}

\subjclass[2020]{08A35; 08C05; 16B50; 16W25; 17A32; 17A36; 18C05; 18E13}
\keywords{Action representable category, amalgamation property, split extension, non-associative algebra, partial algebra, quadratic operad}

\maketitle

\section*{Introduction}\label{Introduction}
In the article~\cite{IntAct}, F.~Borceux, G.~Janelidze and G.~M.~Kelly introduce the concept of an \emph{internal object action}, with the aim of extending the correspondence between actions and split extensions from the context of groups and Lie algebras to arbitrary semi-abelian categories~\cite{Semi-Ab}. In certain of those categories, internal actions are exceptionally well behaved, in the sense that the actions on each object~$X$ are \emph{representable}: this means that there exists an object $[X]$, called the \emph{actor} of $X$, such that the functor $\Act(-,X)\cong \SplExt(-,X)$, which sends an object $B$ to the set of actions/split extensions of $B$ on/by~$X$, is naturally isomorphic to the functor $\Hom(-,[X])$. The context of action representable semi-abelian categories is further studied in~\cite{BJK2}, where it is for instance explained that the category of commutative associative algebras over a field is not action representable. Later it was shown that the only action representable variety of non-associative algebras over an infinite field $\F$ of characteristic different from $2$ is the variety of Lie algebras~\cite{Tim}. The relative strength of the notion naturally led to the definition of closely related weaker notions.

The first of these was the concept of an \emph{action accessible} category due to D.~Bourn and G.~Janelidze~\cite{act_accessible}: it is weak enough to include all \emph{Orzech categories of interest}~\cite{Orzech}, as proved by A.~Montoli in~\cite{Montoli}.

Alternatively, the properties of the representing object $[X]$ may be weakened; this is the aim in \cite{Casas}, where it is shown that each Orzech category of interest admits a so-called \emph{universal strict general actor} (USGA for short).

Our present article focuses on a concept which was more recently introduced, by G.~Janelidze in~\cite{WAR}: \emph{weak representability of actions} (WRA). Instead of asking that for each object $X$ in a semi-abelian category $\C$ we have an object $[X]$ and a natural isomorphism $\SplExt(-,X)\cong\Hom_\C(-,[X])$, we require the existence of an object $T$ and a monomorphism of functors
\[
	\tau\colon\SplExt(-,X)\rightarrowtail\Hom_\C(-,T).
\]
Such an object $T$ is then called a \emph{weak actor} of $X$, and when each $X$ admits a weak actor, $\C$ is said to be \emph{weakly action representable}. For instance, if in an Orzech category of interest, each $\USGA(X)$ is an object of the category, then this category is weakly action representable~\cite{CigoliManciniMetere}. This is the case of the category $\AAlg$ of associative algebras~\cite{WAR} or the category $\LeibAlg$ of Leibniz algebras~\cite{CigoliManciniMetere} over a field.

J.~R.~A.~Gray observed in \cite{Gray} that an Orzech category of interest need not be weakly action representable. One of our aims in the present article is to study the condition (WRA) in the context of varieties of (non-associative) algebras over a field. (We recall basic definitions and results concerning this setting in \Cref{SecPrel}.) It is known that such a variety is action accessible if and only if it is algebraically coherent~\cite{Tim}, and it is also known~\cite{WAR} that action accessibility is implied by (WRA).
In~\Cref{SecComm} we give a complete classification of the action accessible, operadic quadratic varieties of non-associative algebras with an identity of degree $2$ (so commutative or anti-commutative algebras) and we study the representability of actions of each of them. Moreover, we prove that the variety of commutative associative algebras, the variety of two-step nilpotent commutative algebras and that of two-step nilpotent anti-commutative algebras are weakly action representable categories. For the variety of commutative associative algebras, we show that the existence of a weak representation is closely connected to the \emph{amalgamation property} (AP) \cite{kiss} which already appeared in~\cite{BJK2} in relation to action representability. In Section~\ref{Section General Representability} the study of (WRA) and its relations with the condition (AP) is extended to a general variety of algebras over a field.

Our second aim is to work towards the construction, still within the context of algebras over a field, of a weakly representing object~$\E(X)$ for the actions on/split extensions of an object $X$ of a variety of non-associative algebras $\V$. We believe that in certain settings, this object may be easier to work with than the more abstract weak actor. In \Cref{E(X)} we actually obtain a \emph{partial} algebra $\E(X)$, which we call \emph{external weak actor} of $X$, together with a monomorphism of functors ${\SplExt(-,X) \rightarrowtail \Hom(U(-),\E(X))}$, where $U$ is the forgetful functor from $\V$ to the category of partial algebras, which we study in detail in the case of quadratic varieties of algebras (\Cref{Section Quadratic}).

We end the article with some open questions (\Cref{Section Questions}).

\section{Preliminaries}\label{SecPrel}

The present work takes place in \emph{semi-abelian categories} which were introduced in \cite{Semi-Ab} in order to capture categorical-algebraic properties of non-abelian algebraic structures. A category is \emph{semi-abelian} if it is pointed, admits binary coproducts, is protomodular and Barr-exact. Well-known examples are the category $\Gp$ of groups, the category $\Rng$ of not necessarily unitary rings, any variety $\V$ of non-associative algebras over a field $\F$, as well as all abelian categories. Throughout the remainder of the paper, when we consider a category $\C$, we assume it to be semi-abelian; when we consider a variety $\V$, we assume that $\V$ is a variety of non-associative algebra over a field $\F$. We fix the field $\F$, so that we may drop it from our notation.

\subsection*{Internal actions and their representability}\label{SecAct}

A central notion which appears in the semi-abelian context is that of \emph{split extensions}. Let $X$, $B$ be objects of a semi-abelian category $\C$; a \emph{split extension} of $B$ by $X$ is a diagram
\begin{equation}\label{eq:split_ext}
	\begin{tikzcd}
		0\ar[r]
		&X\arrow [r, "k"]
		&A \arrow[r, shift left, "\alpha"] &
		B\ar[r]\ar[l, shift left, "\beta"]
		&0
	\end{tikzcd}
\end{equation}
in $\C$ such that $\alpha \circ \beta = 1_B$ and $(X,k)$ is a kernel of $\alpha$. We observe that since protomodularity implies that the pair $(k,\beta)$ is jointly strongly epic, the morphism $\alpha$ is indeed the cokernel of $k$ and diagram (\ref{eq:split_ext}) represents an extension of $B$ by~$X$ in the usual sense. Morphisms of split extensions are morphisms of extensions that commute  with the sections. Let us observe that, again by protomodularity, a morphism of split extensions fixing $X$ and $B$ is necessarily an isomorphism. For an object $X$ of~$\C$, we define the functor
\[
	\SplExt_\C(-,X)\colon \C^{\op} \rightarrow \Set
\]
which assigns to any object $B$ of $\C$, the set $\SplExt_\C(B,X)$ of isomorphism classes of split extensions of $B$ by $X$ in $\C$, and to any arrow $f\colon B'\to B$ the \emph{change of base} function $f^*\colon \SplExt_\C(B,X) \to \SplExt_\C(B',X)$ given by pulling back along $f$. When there is no ambiguity on the category $\C$, we will use the notation $\SplExt(-,X)$.

A feature of semi-abelian categories is that one can define a notion of \emph{internal action}. Internal actions correspond to split extensions via a semidirect product construction; it turns out that, as a result, for our purposes we need no explicit description of what is an internal action. We refer the interested reader to \cite{BJK2}, where the equivalence between the two concepts is described in detail. For us here, it suffices to note that if we fix an object $X$, internal actions on $X$ in $\C$ give rise to a functor
\[
	\Act(-,X)=\Act_\C(-,X) \colon \C^{\op} \rightarrow \Set
\]
and a natural isomorphism of functors $\Act(-,X)\cong \SplExt(-,X)$.  This justifies the terminology in the definition that follows.

\begin{definition}[\cite{BJK2}]
	A semi-abelian category~$\C$ is said to be \defn{action representable} if for every object $X$ in it the functor $\Act(-,X)$ is representable. In other words, there exists an object $[ X]$ in $\C$, called the \defn{actor} of $X$, and a natural isomorphism
	\begin{align*}
		\SplExt(-,X) \cong \Hom_\C(-,[X]).
	\end{align*}
\end{definition}

Basic examples of semi-abelian categories which satisfy action representability are the category $\Gp$ of groups with the actor of $X$ being the group of automorphisms $\Aut(X)$, the category~$\LieAlg$ of Lie algebras with the actor of $X$ being the Lie algebra of derivations $\Der(X)$, and any abelian category with the actor of $X$ being the zero object. For the categories~$\AAlg$ of associative algebras and $\CAAlg$ of commutative associative algebras, representability of actions was studied in~\cite{BJK2}, where the authors proved that they are not action representable.

It is explained in \cite{IntAct} that action representability is equivalent to the condition that for every object $X$ in $\C$ the category $\SplExtX$ of split extensions in $\C$ with kernel $X$ has a terminal object
\[
	\xymatrix{0 \ar[r] & X \ar[r] & [X] \ltimes X \ar@<.5ex>[r] & [X] \ar@<.5ex>[l] \ar[r] & 0.}
\]
We can weaken this condition assuming instead that for any $X$, every object in~$\SplExtX$ is accessible (i.e.\ it has a morphism into a subterminal or so-called \emph{faithful} object, see \cite{act_accessible}). In this way, we encompass a wider class of examples that did not satisfy representability of actions such as the category $\PoisAlg$ of (non-commutative) Poisson algebras, the category $\AAlg$ of associative algebras or the category $\CAAlg$ of commutative associative algebras. This notion called \defn{action accessibility} was introduced by D.~Bourn and G.~Janelidze \cite{act_accessible} in order to calculate centralisers of normal subobjects or of equivalence relations. It was then shown by A.~Montoli that any Orzech category of interest is an action accessible category~\cite{Montoli}. This explains why all of the varieties of non-associative algebras mentioned above are action accessible.

Since by definition the existence of a terminal object in $\SplExtX$ is stronger than every object being accessible, it is immediate that
\[
	\textit{action representability}\Rightarrow\textit{action accessibility.}
\]
Recently, in \cite{WAR}, G.~Janelidze introduced an intermediate notion: \emph{weak representability of actions}.

\begin{definition}
	A semi-abelian category $\C$ is said to be \defn{weakly action representable (WRA)} if for every object $X$ in it, there exists an object $T$ of $\C$ and a monomorphism of functors
	\[
		\tau\colon\SplExt(-,X)\rightarrowtail\Hom_\C(-,T).
	\]
	We call such an object $T$ a \defn{weak actor} of $X$, and a morphism $\varphi \colon B \rightarrow T$ in the image of $\tau_B$ an \defn{acting morphism}.
\end{definition}

It is clear from the definitions that every action representable category is weakly action representable. Also in~\cite{WAR}, it is proven that the category $\AAlg$ is weakly action representable with a weak actor of $X$ given by the associative algebra
\begin{multline*}
	\Bim(X)=\lbrace (f*-,-*f) \in \End(X)\times \End(X)^{\op}\mid f*(xy)=(f*x)y,\\ (xy)*f=x(y*f),\; x(f*y)=(x*f)y, \; \forall x,y \in X \rbrace
\end{multline*}
of \defn{bimultipliers} of $X$ (see~\cite{MacLane58}). The case of the category $\LeibAlg$ of Leibniz algebras was studied in \cite{CigoliManciniMetere}. There the authors showed that a weak actor of a Leibniz algebra~$X$ is the Leibniz algebra
\begin{multline*}
	\Bider(X)=\lbrace (d,D) \in \End(X)^2\mid d(xy)=d(x)y+xd(y), \\
	D(xy)=D(x)y-D(y)x, \; xd(y)=xD(y), \; \forall x,y \in X \rbrace
\end{multline*}
of \defn{biderivations} of $X$ (see~\cite{loday1993version} and \cite{ManciniBider}), where the bilinear operation is defined by
\[
	[(d,D),(d',D')]=(d \circ d' - d' \circ d, D \circ d' - d' \circ D).
\]
In the same paper, the representability of actions in the categories~$\PoisAlg$ and $\CPoisAlg$ of (commutative) Poisson algebras was studied.

Another important observation made by G.~Janelidze is that every weakly action representable category is action accessible. We thus have that
\[
	\textit{action representability}\Rightarrow\textit{weak action representability}\Rightarrow\textit{action accessibility.}
\]
J.~R.~A.\ Gray proved in \cite{Gray} that the varieties of $n$-solvable groups where $n > 3$ are action accessible but not weakly action representable. This partially answers a question asked by G.\ Janelidze in~\cite{WAR}, whether reasonably mild conditions may be found on a semi-abelian category under which the second implication may be reversed: already it makes clear that not all action accessible semi-abelian varieties are weakly action representable. Our aim here is to study what happens for a different class of categories, namely varieties of not necessarily associative algebras over a field.

\subsection*{Varieties of non-associative algebras}\label{SecVar}
We now recall the algebraic setting we are working in: \emph{varieties of non-associative algebras} over a field $\F$. We think of those as collections of algebras satisfying a chosen set of polynomial equations. The interested reader can find a more detailed presentation of the subject in \cite{VdL-NAA}.

By a \defn{(non-associative) algebra} $A$ we mean a vector space $A$ equipped with a bilinear operation $A \times A\rightarrow A\colon (x,y)\mapsto xy$ which we call the \defn{multiplication}. The existence of a unit element is not assumed, nor are any other conditions on the multiplication besides its bilinearity. Let $\NAlg$ denote the category of non-associative algebras, where morphisms are linear maps which preserve the multiplication.

We consider the \emph{free algebra} functor $\Set \rightarrow \NAlg$ which sends a set $S$ to the \emph{free algebra} generated by elements of $S$. This functor has the forgetful functor as a right adjoint. Moreover, it factorises through the \emph{free magma functor} $\FM\colon\Set \rightarrow \Mag$, which sends a set $S$ to the magma $\FM(S)$ of non-associative words in $S$, and the \emph{magma algebra functor} $\F [-] \colon\Mag \rightarrow \NAlg$.

Let $S$ be a set. An element $\varphi$ of $\F[\FM(S)]$ is called a \defn{non-associative polynomial} on $S$. We say that such a polynomial is a \defn{monomial} when it is a scalar multiple of an element in $\FM(S)$. For example, if $S=\lbrace x,y,z,t \rbrace$, then $(xy)t+(zy)x$, $xx+yz$ and $(xt)(yz)$ are polynomials in $S$ and only the last one is a monomial. For a monomial~$\varphi$ on a set $\lbrace x_1, \dots, x_n \rbrace$, we define its \defn{type} as the $n$-tuple $(k_1, \dots , k_n)\in \N^n$ where $k_i$ is the number of times $x_i$ appears in $\varphi$, and its \defn{degree} as the natural number $k_1 + \dots + k_n$. A polynomial is said to be \defn{multilinear} if all monomials composing it have the same type of the form $(1, \dots , 1)$. Among the examples above, only the last one is multilinear.

\begin{definition}\label{def identity variety}
	An \defn{identity} of an algebra $A$ is a non-associative polynomial $\varphi=\varphi(x_1,\dots ,x_n)$ such that $\varphi(a_1, \dots , a_n)=0$ for all $a_1$, \dots, $a_n \in A$. We say that the algebra $A$ \emph{satisfies} the identity $\varphi$.

	Let $I$ be a subset of $\F[\FM(S)]$ with $S$ being a set of variables. The \defn{variety of algebras} determined by $I$ is the class of all algebras which satisfy all the identities in $I$. We say that a variety \defn{satisfies the identities in $I$} if all algebras in this variety satisfy the given identities. In particular, if the variety is determined by a set of multilinear polynomials, then we say that the variety is \defn{operadic}. If there exists a set of identities of degree~$2$ or~$3$ that generate all the identities of~$\V$, we say that the variety is \defn{quadratic}. Recall---see for instance~\cite{DGM} where this is explained in detail---that an operadic, quadratic variety of algebras over a field can be viewed as a variety determined by a quadratic operad.
\end{definition}

Any variety of non-associative algebras can, of course, be seen as a category where the morphisms are the same as in $\NAlg$. In particular, any such variety is a semi-abelian category.

\begin{remark}
	Whenever the characteristic of the field $\F$ is zero, any variety of non-associative algebras over $\F$ is operadic. This is due to the well-known multilinearisation process, see \cite[Corollary~3.7]{Osborn}. The reason behind the name ``operadic'' is explained in~\cite[Section 2]{Cosmash}.
\end{remark}

\begin{examples}\label{Examples varieties}
	\begin{enumerate}
		\item We write $\AbAlg$ for the variety of \emph{abelian} algebras determined by the identity $xy=0$. Seen as a category, this variety is isomorphic to the category $\Vect$ of vector spaces over $\F$. It is the only non-trivial variety of non-associative algebras which is an abelian category; this explains the terminology.

		\item We write $\AAlg$ for the variety of \emph{associative} algebras determined by the identity of \emph{associativity} which is $x(yz)-(xy)z=0$, or equivalently $x(yz)=(xy)z$.

		\item We write $\AntiAAlg$ for the variety of \emph{anti-associative} algebras, determined by the \emph{anti-associative} identity $x(yz)=-(xy)z$.

		\item We write $\CAlg$ for the variety of \emph{commutative} algebras determined by the identity of \emph{commutativity} which is $xy-yx=0$, or equivalently $xy=yx$.

		\item We write $\ACAlg$ for the variety of \emph{anti-commutative} algebras determined by \emph{anti-commutativity} which is $xy+yx=0$, or equivalently $xy=-yx$.

		\item We write $\CAAlg$ for the variety of commutative associative algebras.

		\item We write $\ACAntiAAlg$ for the variety of anti-commutative anti-associative algebras.

		\item We write $\LieAlg$ for the variety of \emph{Lie algebras} determined by \emph{anti-commut\-ativity} and the \emph{Jacobi identity}, which respectively are $xy+yx=0$ and $x(yz)+y(zx)+z(xy)=0$.

		\item One can see that all the previous examples are operadic varieties. Let us provide a non-operadic example: the variety $\Bool$ of \emph{Boolean rings}, which may be seen as associative $\Z_2$-algebras satisfying $xx=x$. This variety is action representable.

		\item We write $\JJAlg$ for the variety of \emph{Jacobi--Jordan algebras} which is determined by commutativity and the Jacobi identity. Jacobi--Jordan algebras, also known as \emph{mock-Lie} algebras, are the commutative counterpart of Lie algebras. The name of Jordan in the definition is justified by the fact that every Jacobi--Jordan algebra is a Jordan algebra (see \cite{JJAlg}).

		\item We write $\LeibAlg$ for the variety of \emph{(right) Leibniz algebras} determined by the \emph{(right) Leibniz identity} which is $(xy)z-(xz)y-x(yz)=0$.

		\item We write $\Alt$ for the variety of \emph{alternative algebras}, which is determined by the identities $(yx)x-yx^2=0$ and $x(xy)-x^2y=0$ . Every associative algebra is obviously alternative and an example of an alternative algebra which is not associative is given by the \emph{octonions} $\mathbb{O}$, that is the eight-dimensional algebra with basis $\lbrace e_1,e_2,e_3,e_4,e_5,e_6,e_7,e_8 \rbrace$ and multiplication table
		      \[
			      e_i e_j  =
			      \begin{cases}
				      e_j ,                                      & \text{if }i = 1   \\
				      e_i ,                                      & \text{if }j = 1   \\
				      - \delta_{ij}e_1 + \varepsilon _{ijk} e_k, & \text{otherwise},
			      \end{cases}
		      \]
		      where $\delta_{ij}$ is the \emph{Kronecker delta} and $\varepsilon _{ijk}$ a \emph{completely antisymmetric tensor} with value 1 when $ijk = 123, 145, 176, 246, 257, 347, 365$. Notice that $e_1$ is the unit of the algebra $\mathbb{O}$.

		      When $\bchar(\F) \neq 2$, the multilinearisation process shows that $\Alt$ is equivalent to the variety defined by
		      \[
			      (xy)z+(xz)y-x(yz)-x(zy)=0
		      \]
		      and
		      \[
			      (xy)z+(yx)z-x(yz)-y(xz)=0.
		      \]
		\item Taking any variety $\V$, one can look at a subvariety of it by adding further identities to be satisfied. For example, let $\V$ be a variety determined by a set of identities $I$ and let $k$ be any positive natural number, then we write~$\Nil_k(\V)$ for the variety of \emph{$k$-step nilpotent algebras in $\V$} determined by the identities in $I$ and the identities of the form $x_1 \cdots x_{k+1}=0$ with all possible choices of parentheses.
	\end{enumerate}
\end{examples}

We now want to explain how we may describe actions in a variety of non-associative algebras. As we already mentioned before, in a semi-abelian category, actions are split extensions.
\begin{definition}\label{Rem:SplitV}
	Let
	\begin{equation}\label{diag:SplV}
		\begin{tikzcd}
			0\ar[r]
			&X \arrow [r, "i"]
			&A \arrow[r, shift left, "\pi"] &
			B \ar[r]\ar[l, shift left, "s"]
			&0
		\end{tikzcd}
	\end{equation}
	be a split extension in the variety $\cV$. The pair of bilinear maps
	\[
		l\colon B \times X \rightarrow X, \qquad r \colon X \times B \rightarrow X
	\]
	defined by
	\[
		b \ast x=s(b)i(x), \quad x \ast b=i(x)s(b), \quad \forall b \in B, \; \forall x \in X
	\]
	where $b \ast -=l(b,-)$ and $-\ast b=r(-,b)$, is called the \emph{derived action} of $B$ on $X$ associated with~\eqref{diag:SplV}.
\end{definition}

Given a pair of bilinear maps
\[
	l\colon B \times X \rightarrow X,\qquad r \colon X \times B \rightarrow X
\]
with $B$, $X$ objects of $\V$, we may define a multiplication on the direct sum of vector spaces $B \oplus X$ by
\begin{equation}\label{product on semi direct}
	(b,x) \cdot (b',x')=(bb',xx'+b \ast x' + x \ast b')
\end{equation}
with $ b \ast x' \coloneqq l(b,x')$ and $x\ast b' \coloneqq r(x,b')$. This construction allows us to build the split extension in $\NAlg$
\begin{equation}
	\begin{tikzcd}
		0\ar[r]
		&X \arrow [r, "i_2"]
		& B\oplus X \arrow[r, shift left, "\pi_1"] &
		B \ar[r]\ar[l, shift left, "i_1"]
		&0
	\end{tikzcd}
\end{equation}
with $i_2(x)=(0,x)$, $i_1(b)=(b,0)$ and $\pi_1(b,x)=b$. This is a split extension in $\V$ if and only if $(B\oplus X,\cdot)$ is an object of $\V$, i.e.\ it satisfies the identities which determine~$\V$. In other words, we have the following result analogous to~\cite[Theorem 2.4]{Orzech} and~\cite[Lemma 1.8]{Tim}:

\begin{lemma}\label{lemma derived action}
	In a variety of non-associative algebras $\V$, given a pair of bilinear maps
	\[
		l\colon B \times X \rightarrow X,\qquad r \colon X \times B \rightarrow X,
	\]
	we define the multiplication on $B \oplus X$ as above in~\eqref{product on semi direct}. Then, the pair $(l,r)$ is a derived action of $B$ on $X$ if and only if $(B \oplus X,\cdot)$ is in $\V$. In this case, we call~$B\oplus X$ the \defn{semi-direct product} of $B$ and $X$ (with respect to the derived action) and we denote it by $B\ltimes X$.
\end{lemma}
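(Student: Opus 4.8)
The plan is to establish the two implications separately, using as a hinge the defining property that a pair $(l,r)$ of bilinear maps is a derived action of $B$ on $X$ exactly when it is obtained, as in \Cref{Rem:SplitV}, from some split extension of $B$ by $X$ in $\V$. Observe first that, since the multiplication \eqref{product on semi direct} is bilinear whenever $l$ and $r$ are, the pair $(B\oplus X,\cdot)$ is always an object of $\NAlg$; what is in question is only whether it satisfies the defining identities of $\V$.

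For the ``if'' direction, assume $(B\oplus X,\cdot)$ lies in $\V$. A direct check---using the bilinearity of $l$ and $r$, so that $l(0,x)=0=r(x,0)$---shows that $i_2$ and $i_1$ are algebra morphisms, that $\pi_1$ is an algebra morphism, that $\pi_1 i_1=1_B$, and that $(X,i_2)$ is the kernel of $\pi_1$; hence the displayed diagram is a split extension in $\NAlg$, and since its middle object is assumed to be in $\V$, it is a split extension in $\V$. Its derived action is then computed from \Cref{Rem:SplitV}: $i_1(b)\cdot i_2(x)=(b,0)\cdot(0,x)=(0,l(b,x))=i_2(l(b,x))$ and, symmetrically, $i_2(x)\cdot i_1(b)=i_2(r(x,b))$, so the derived action of this split extension is precisely $(l,r)$. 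Thus $(l,r)$ is a derived action.

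For the ``only if'' direction, suppose $(l,r)$ is the derived action associated with a split extension \eqref{diag:SplV} in $\V$. The underlying short exact sequence of vector spaces splits via $s$, so $A=i(X)\oplus s(B)$ and $\theta\colon B\oplus X\to A$, $(b,x)\mapsto s(b)+i(x)$, is a linear isomorphism. Because $i$ and $s$ are algebra morphisms we have $i(x)i(x')=i(xx')$ and $s(b)s(b')=s(bb')$, while by definition of the derived action $s(b)i(x')=i(l(b,x'))$ and $i(x)s(b')=i(r(x,b'))$. Expanding $\theta(b,x)\cdot\theta(b',x')$ in $A$ by bilinearity and collecting these four contributions yields $s(bb')+i\bigl(xx'+l(b,x')+r(x,b')\bigr)=\theta\bigl((b,x)\cdot(b',x')\bigr)$; hence $\theta$ is an isomorphism of algebras, and since $A\in\V$ we conclude $B\oplus X\in\V$.

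The bookkeeping in these two computations is routine; the only points needing care are in the ``only if'' direction, namely that the section $s$ of a split extension is a genuine morphism of $\V$ (so that $s(b)s(b')=s(bb')$) and that $A$ decomposes as $i(X)\oplus s(B)$---the latter being, in categorical terms, the fact recalled in \Cref{SecPrel} that $(i,s)$ is jointly strongly epic, whence a morphism of split extensions fixing $X$ and $B$ is an isomorphism. Once $\theta$ is available, matching its multiplicative behaviour with \eqref{product on semi direct} reduces to lining up the four terms of the bilinear expansion with the four terms of that formula.
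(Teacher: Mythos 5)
Your proof is correct and follows essentially the same route the paper intends: the lemma is stated without a formal proof because the preceding discussion already sets up the canonical split extension on $B\oplus X$, and the isomorphism $\theta(b,x)=s(b)+i(x)$ you use for the converse is exactly the one the paper records in \Cref{rem_b*}. Your write-up merely makes the routine verifications explicit (one minor remark: the decomposition $A=i(X)\oplus s(B)$ is already immediate from the linear splitting $a=s(\pi(a))+(a-s(\pi(a)))$, so the appeal to joint strong epicness is not needed).
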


\begin{remark}\label{rem_b*}
	Notice that, for any split extension~\eqref{diag:SplV} and the corresponding derived action $(l,r)$, there is an isomorphism of split extensions
	\begin{equation*}
		\begin{tikzcd}
			0\ar[r]
			&X \arrow [r, "i_2"] \ar[d, "1_{X}"']
			&B \ltimes X \arrow[r, shift left, "\pi_1"] \ar[d, "\theta"]&
			B \ar[r]\ar[l, shift left, "i_1"]\ar[d, "1_{B}"]
			&0 \\
			0\ar[r]
			&X \arrow [r, "i"]
			&A \arrow[r, shift left, "\pi"] &
			B \ar[r]\ar[l, shift left, "s"]
			&0
		\end{tikzcd}
	\end{equation*}
	where $\theta \colon B \ltimes X \rightarrow A\colon (b,x)\mapsto s(b)+i(x)$. 	Thus, when we write $b \ast x$ (resp.\ $x \ast b$), one can think of it as the multiplication $(b,0)\cdot (0,x)$ (resp.\ $(0,x)\cdot(b,0$)) in $B \ltimes X$.
\end{remark}

\subsection*{Categorical consequences}

Let $\V$ be an operadic variety of non-associative algebras. We recall two results which will be useful for understanding the rest of the paper.

\begin{theorem}[\cite{GM-VdL2,GM-VdL3}]\label{Theorem AC iff Orzech}
	The following conditions are equivalent:
	\begin{tfae}
		\item $\V$ is \defn{algebraically coherent}~\cite{acc};
		\item $\V$ is an Orzech category of interest;
		\item $\V$ is action accessible;
		\item there exist $\lambda_{1}$, \dots, $\lambda_{8}$, $\mu_{1}$, \dots, $\mu_{8}$ in $\F$ such that
		\begin{align*}
			x(yz)=\lambda_1(xy)z & +\lambda_2(yx)z+\lambda_3z(xy) + \lambda_4 z(yx)                        \\
			                     & + \lambda_5 (xz)y + \lambda_6 (zx)y + \lambda_7 y(xz) + \lambda_8 y(zx)
			\intertext{and}
			(yz)x=\mu_1(xy)z     & +\mu_2(yx)z+\mu_3z(xy) + \mu_4 z(yx)                                    \\
			                     & + \mu_5 (xz)y + \mu_6 (zx)y + \mu_7 y(xz) + \mu_8 y(zx)
		\end{align*}
		are identities in $\V$. \noproof
	\end{tfae}
\end{theorem}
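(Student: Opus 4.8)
The plan is to establish the cycle of implications $(i)\Rightarrow(iii)\Rightarrow(iv)\Rightarrow(ii)\Rightarrow(i)$, most of whose arrows are already on record and only one of which carries real weight. Two of them come straight from \cite{acc}: every algebraically coherent semi-abelian category is action accessible, which gives $(i)\Rightarrow(iii)$, and every Orzech category of interest is algebraically coherent, which gives $(ii)\Rightarrow(i)$. (As a consistency check, $(ii)\Rightarrow(iii)$ is also Montoli's theorem \cite{Montoli}, and for operadic varieties $(iii)\Rightarrow(i)$ is \cite{Tim}, so $(i)$, $(ii)$ and $(iii)$ are in fact already known to coincide.) The substance of the statement is therefore to fit the explicit condition $(iv)$ into this circle, and the economical way to do so is to prove $(iv)\Rightarrow(ii)$ and $(iii)\Rightarrow(iv)$; chaining these with the two arrows above closes the loop through all four conditions.

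The implication $(iv)\Rightarrow(ii)$ is essentially a reformulation. Up to relabelling the variables, $(iv)$ is exactly Orzech's defining closure condition (his axioms (Cat\,1)--(Cat\,2)) for a ``category of interest'', applied to the single bilinear operation $xy$ together with its opposite $(x,y)\mapsto yx$. Indeed, among the twelve multilinear monomials of degree $3$ in $x$, $y$, $z$, the eight occurring on the right-hand sides of $(iv)$---the terms of shape $(x\star y)\star' z$ and $(x\star z)\star' y$ with $\star$, $\star'$ ranging over the multiplication and its opposite---are precisely those in which $x$ is bracketed with a single other variable inside the innermost parenthesis, whereas the four that are \emph{absent}, namely $x(yz)$, $x(zy)$, $(yz)x$ and $(zy)x$, are precisely the terms $x\star(y\star' z)$ and $(y\star' z)\star x$ in which $x$ stands against the compound factor. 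So $(iv)$ asserts exactly that every ``outer-$x$'' ternary term reduces to a linear combination of ``inner-$x$'' ternary terms, which is Orzech's nesting axiom. The remaining data in the definition of an Orzech category of interest---a variety of $\Omega$-groups whose distinguished group operation is the underlying vector-space addition, with the set of binary operations closed under the relevant duality---are automatic for any variety of non-associative algebras over $\F$. Hence an operadic variety satisfying $(iv)$ is an Orzech category of interest.

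The core of the proof is $(iii)\Rightarrow(iv)$, for which I would argue by contraposition. Suppose no scalars $\lambda_1,\dots,\lambda_8$, $\mu_1,\dots,\mu_8$ as in $(iv)$ exist. Since $\V$ is operadic one may multilinearise, so that this failure is witnessed already in the degree-$3$ component of the free $\V$-algebra on $\{x,y,z\}$: the class of $x(yz)$ (or of $(yz)x$) there fails to be a linear combination of the eight ``inner-$x$'' monomials. From such a genuine linear obstruction one manufactures an explicit split extension
\[
	\begin{tikzcd}
		0\ar[r] & X\ar[r,"k"] & A\ar[r,shift left,"\alpha"] & B\ar[r]\ar[l,shift left,"\beta"] & 0
	\end{tikzcd}
\]
in $\V$---with $X$ and $B$ small, built out of a few copies of $\F$ carrying the ``generic'' multiplication still permitted by the identities of $\V$ in degrees $\le 3$---whose associated point in $\Pt_B(\V)$ admits no morphism to a faithful object: one checks that the failure of $(iv)$ obstructs the existence of a faithful object dominating this point, contradicting action accessibility. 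The main obstacle is precisely this construction: one must choose $X$, $B$ and the derived action so that the specific non-vanishing combination of degree-$3$ monomials translates into a failure of accessibility, and this forces a case analysis according to which identities of degree $2$ (commutativity, anti-commutativity, or none) hold in $\V$. A variant of the same argument instead shows directly that a suitable change-of-base functor between categories of points is not coherent, giving $\neg(iv)\Rightarrow\neg(i)$; either route completes the cycle.

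Assembling $(i)\Rightarrow(iii)\Rightarrow(iv)\Rightarrow(ii)\Rightarrow(i)$ then yields the fourfold equivalence. We note that the operadic hypothesis is exactly what permits the reduction to the multilinear degree-$3$ situation, and hence guarantees that the sixteen scalars $\lambda_1,\dots,\lambda_8,\mu_1,\dots,\mu_8$ suffice to encode the entire condition; the full verification underlying $(iii)\Rightarrow(iv)$ is carried out in \cite{GM-VdL2,GM-VdL3}.
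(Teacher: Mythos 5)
The paper states this theorem without proof: it is imported from \cite{GM-VdL2,GM-VdL3} (together with \cite{acc,Montoli,Tim}) and marked \emph{noproof}, so there is no in-paper argument to compare yours against. Your assembly of the cycle is consistent with how those references organise the result: $(i)\Rightarrow(iii)$ and $(ii)\Rightarrow(i)$ are indeed theorems of \cite{acc}, and your reading of $(iv)$ as Orzech's nesting axiom for the multiplication together with its opposite --- with the eight ``inner-$x$'' monomials on the right-hand sides and the four ``outer-$x$'' monomials $x(yz)$, $x(zy)$, $(yz)x$, $(zy)x$ excluded --- is the correct way to see $(iv)\Rightarrow(ii)$, the remaining axioms of a category of interest being automatic for a variety of algebras over a field.

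The one implication carrying all the mathematical weight, $(iii)\Rightarrow(iv)$, is however not actually proved in your attempt. You state a strategy (contraposition, multilinearisation to the degree-$3$ multilinear component of the free algebra on three generators, construction of a split extension witnessing inaccessibility from the linear obstruction), but the construction itself --- the explicit choice of $B$, $X$ and the derived action that turns a failure of the $\lambda/\mu$-rules into an object of $\SplExt(B,X)$ admitting no morphism to a faithful point, together with the case analysis over the possible degree-$2$ identities --- is left as ``one manufactures\dots\ one checks\dots'' and explicitly deferred to \cite{GM-VdL2,GM-VdL3}. As a reduction of the theorem to its single hard step plus a citation for that step, this is legitimate and is effectively what the paper itself does; but it is not a self-contained proof, and the deferred step is not a routine verification but the substance of the characterisation.
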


We call the two previous identities together the \defn{$\lambda/\mu$-rules}. Since (WRA) implies action accessibility in general, the existence of the $\lambda/\mu$-rules is a necessary condition for the variety $\V$ to be weakly action representable.

\begin{theorem}[\cite{Tim}]\label{AR implies Lie}
	The following conditions are equivalent:
	\begin{tfae}
		\item $\V$ is action representable;
		\item $\V$ is either the variety $\LieAlg$ or the variety $\AbAlg$.\noproof
	\end{tfae}
\end{theorem}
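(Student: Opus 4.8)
The statement is an equivalence whose implication (ii)$\Rightarrow$(i) is the classical one, which I would merely recall: a Lie algebra $X$ has actor $\Der(X)$, the canonical split extension being the holomorph $\Der(X)\ltimes X$, while $\AbAlg\cong\Vect$ is an abelian category, in which each functor $\SplExt(-,X)$ is constant at the one-point set and so is represented by $[X]=0$. All the content is in proving (i)$\Rightarrow$(ii), and the plan is to take the bare hypothesis that $\V$ is action representable and wring from it either the defining identities of $\LieAlg$ or the identity $xy=0$.

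The first step is a cheap reduction. Action representability implies action accessibility, so by \Cref{Theorem AC iff Orzech} the variety $\V$ satisfies the $\lambda/\mu$-rules for some scalars $\lambda_i$, $\mu_i$; from here on we are looking at the finite-parameter family of operadic varieties cut out by those rules (possibly together with extra degree-$2$ identities), and the goal becomes to show that action representability singles out exactly the parameter values giving $\LieAlg$ and $\AbAlg$. (Arguments that specialise variables and invoke genericity of coefficients tacitly take place over an infinite field of characteristic different from $2$.)

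The heart of the argument is to play the universal property of the actor against explicit small test algebras. Fix a finite-dimensional $X$. By \Cref{lemma derived action}, $\SplExt(B,X)$ is the set of pairs of bilinear maps $(l,r)$, with $l\colon B\times X\to X$ and $r\colon X\times B\to X$, cut out inside $\Hom_\F(B\otimes X,X)\oplus\Hom_\F(X\otimes B,X)$ by the equations---linear, or quadratic when $\V$ has degree-$3$ identities---obtained by imposing the identities of $\V$ on the semidirect product $B\oplus X$. Representability forces $B\mapsto\SplExt(B,X)$ to coincide with $\Hom_\V(B,[X])$; evaluating this natural isomorphism, via the Yoneda lemma, at the free $\V$-algebra on one, then two, then three generators reveals in turn the underlying vector space of $[X]$, its multiplication, and the precise manner in which an arbitrary derived action must factor through $[X]$. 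Matching this against the explicit linear-algebraic description of the left-hand side yields equations on the $\lambda_i$, $\mu_i$ and on the admissible degree-$2$ identities. I expect the outcome to be a dichotomy: either the multiplication of $\V$ is forced to vanish, so $\V=\AbAlg$, or its symmetric part is forced to vanish, so $\V$ is anti-commutative. In the anti-commutative case the operators $b\mapsto l(b,-)$ become (generalised) derivations, so $[X]$ is driven into $\Der(X)$ and must contain the inner derivations; one last confrontation of the $\lambda/\mu$-rules with the requirement that \emph{every} derived action embed into $\Hom_\V(-,[X])$ should then collapse the surviving freedom exactly onto the Jacobi identity, giving $\V=\LieAlg$.

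The main obstacle is this middle step, and the difficulty is one of sharpness rather than of principle: one must extract \emph{enough} independent equations on the structure constants from the single hypothesis that $\SplExt(-,X)$ is representable, and do so uniformly across the whole $\lambda/\mu$-family rather than sink into an unmanageable case analysis. Choosing the test pair $(B,X)$ is the delicate point; it must be small enough to compute with yet rich enough to detect non-Lie, non-abelian behaviour. The example of $\AAlg$, which is not action representable even though the functor of actions on the one-dimensional abelian algebra \emph{is} representable---by $\F\times\F$ with componentwise product---is the pertinent warning: a single one-dimensional test object is too coarse, and one must work with an $X$ on which the symmetric part of the multiplication of $\V$ is genuinely visible. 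Once anti-commutativity is secured, the passage to the Jacobi identity is comparatively soft.
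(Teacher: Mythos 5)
The paper contains no proof of this statement to compare against: it is imported wholesale from \cite{Tim} (hence the \noproof), so your attempt has to stand entirely on its own. The direction (ii)$\Rightarrow$(i) is fine and standard. The problem is (i)$\Rightarrow$(ii): what you have written is a programme, not a proof. The reduction to the $\lambda/\mu$-rules via \Cref{Theorem AC iff Orzech} is legitimate, and testing the natural isomorphism $\SplExt(-,X)\cong\Hom_\V(-,[X])$ against free algebras is the right kind of move; but the entire content of the theorem is concentrated in the step you introduce with ``I expect the outcome to be a dichotomy'' and close with ``should then collapse the surviving freedom exactly onto the Jacobi identity''. The proposal never produces a single equation on the sixteen parameters $\lambda_1,\dots,\lambda_8,\mu_1,\dots,\mu_8$, and never rules out any concrete non-Lie, non-abelian member of the family --- associative, Leibniz, Jacobi--Jordan, two-step nilpotent algebras all satisfy $\lambda/\mu$-rules, and several of them are shown in this very paper to be \emph{weakly} action representable, so the obstruction to full representability is delicate and cannot be waved through. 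You correctly diagnose the danger (the $\AAlg$ example with $X=\F$ is a good warning that one-dimensional test objects are too coarse), but diagnosing the obstacle is not the same as overcoming it.

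Two further remarks. First, the hypotheses you relegate to a parenthesis --- infinite field, characteristic different from $2$, operadic variety --- are genuine hypotheses of the theorem as proved in \cite{Tim}; any specialisation-of-variables or genericity argument depends on them, so they belong in the statement of what you are proving, not in an aside. Second, the route actually taken in \cite{Tim} does not attack all split extensions at once: it first restricts to kernels $X$ that are \emph{abelian} algebras, i.e.\ to representations (Beck modules), whose representability is already forced by action representability and for which the conditions of \Cref{lemma derived action} become linear in the action. That restriction is precisely what makes the confrontation with small test algebras computable uniformly over the $\lambda/\mu$-family; without some such device, your middle step is not merely unfinished but, as set up, exactly the ``unmanageable case analysis'' you say you wish to avoid.
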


\Cref{AR implies Lie} helps motivating our interest in the condition (WRA). In fact, in our context, there is only one non-trivial example of a variety which is action representable. This suggests to study a generalisation of the notion of representability of actions. On the other hand, action accessibility may not be enough to study some kind of (weak) actor. The next result, which is closely related to \cite[Proposition 4.5]{WAR}, explains one way of understanding weak action representability for any variety of non-associative algebras over a field.

\begin{proposition}
	A variety of non-associative algebras $\V$ is weakly action representable if and only if for any object $X$ in it, there exists an object $T$ of $\V$ such that for every derived action of an object $B$ of $\V$ on $X$
	\[
		l\colon B \times X \rightarrow X, \qquad r \colon X \times B \rightarrow X,
	\]
	there exists a unique morphism $\varphi \in \Hom_\V(B,T)$ and a derived action $(l',r')$ of~$\varphi(B)$ on $X$ such that
	\[
		l'(\varphi(b),x)= l(b,x), \quad r'(x,\varphi(b))= r(x,b),
	\]
	for every $b \in B$ and for every $x \in X$.
\end{proposition}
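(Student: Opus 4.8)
The plan is to translate the abstract condition (WRA) into the explicit language of derived actions via the dictionary provided by \Cref{lemma derived action} and \Cref{rem_b*}. The key observation is that by the Yoneda lemma, a monomorphism of functors $\tau\colon\SplExt(-,X)\rightarrowtail\Hom_\V(-,T)$ is the same thing as a rule which, to each split extension of $B$ by $X$ (equivalently, each derived action $(l,r)$ of $B$ on $X$), assigns a morphism $\varphi=\tau_B([l,r])\colon B\to T$, naturally in $B$, in such a way that the assignment is injective for every fixed $B$.

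First I would unpack the structure of $\tau$ being a natural transformation: for $f\colon B'\to B$, naturality says $\tau_{B'}\circ f^* = (\Hom_\V(f,T))\circ\tau_B$, i.e.\ pulling back a split extension along $f$ and then applying $\tau_{B'}$ gives the same as precomposing $\tau_B$ of the original extension with $f$. Using \Cref{rem_b*}, the pullback of $B\ltimes X$ along $f$ is (up to isomorphism) $B'\ltimes X$ with the derived action $(l\circ(f\times 1_X),\, r\circ(1_X\times f))$. Hence naturality forces: if $\varphi=\tau_B([l,r])$, then $\tau_{B'}$ of the restricted action must equal $\varphi\circ f$. Applying this with $B'=\varphi(B)\hookrightarrow T$... — more usefully, I would note that naturality pins down $\tau$ on \emph{all} of $\SplExt(-,X)$ once we understand which morphisms into $T$ arise, and it forces a compatibility of the derived action on $B$ with a derived action on the image $\varphi(B)$, namely $l$ factors through $\varphi$ in the first variable and $r$ through $\varphi$ in the second. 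That is exactly the $(l',r')$ in the statement. Conversely, given such data $(T,\varphi,l',r')$ with uniqueness, I would \emph{define} $\tau_B([l,r])\coloneqq\varphi$ and check this is well-defined on isomorphism classes, natural, and monic.

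The two directions then run as follows. For ($\Rightarrow$): assume (WRA), take $T$ a weak actor of $X$. Given a derived action $(l,r)$ of $B$ on $X$, it corresponds to an element of $\SplExt_\V(B,X)$; set $\varphi\coloneqq\tau_B([l,r])\in\Hom_\V(B,T)$. Factoring $\varphi$ as $B\twoheadrightarrow\varphi(B)\hookrightarrow T$ and using naturality of $\tau$ along the inclusion $\varphi(B)\hookrightarrow T$ together with the fact that $\tau_{\varphi(B)}$ must send \emph{some} split extension (the one representing $\varphi(B)\hookrightarrow T$ as an acting morphism) to that inclusion, one extracts a derived action $(l',r')$ of $\varphi(B)$ on $X$ whose restriction along $B\twoheadrightarrow\varphi(B)$ is $(l,r)$; this gives the displayed equations $l'(\varphi(b),x)=l(b,x)$, $r'(x,\varphi(b))=r(x,b)$. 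Uniqueness of $\varphi$ is precisely injectivity of $\tau_B$ (two derived actions inducing the same $\varphi$ give split extensions mapping to the same morphism, hence are isomorphic, hence equal as derived actions by \Cref{rem_b*} — here one uses that a morphism of split extensions fixing $X$ and $B$ is an isomorphism, as recalled in \Cref{SecPrel}). For ($\Leftarrow$): given the stated condition, define $\tau_B\colon\SplExt_\V(B,X)\to\Hom_\V(B,T)$ by sending the class of a split extension to its associated $\varphi$; well-definedness on isomorphism classes is immediate since isomorphic split extensions have equal derived actions; injectivity of $\tau_B$ is the uniqueness clause; naturality in $B$ follows because restricting a derived action along $f\colon B'\to B$ and the compatibility equations force the associated morphism of $B'$ to be $\varphi\circ f$ (again by the uniqueness clause, applied in $B'$).

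The main obstacle I anticipate is the bookkeeping in the forward direction: extracting the derived action $(l',r')$ on the image $\varphi(B)$ requires knowing that the inclusion $\varphi(B)\hookrightarrow T$ is itself an \emph{acting morphism}, i.e.\ lies in the image of $\tau_{\varphi(B)}$, so that it is backed by an actual split extension of $\varphi(B)$ by $X$ whose derived action one can restrict. This is not automatic from a bare monomorphism of functors and is the technical heart of \cite[Proposition 4.5]{WAR}; the cleanest route is to observe that $\varphi$ factors as $B\twoheadrightarrow\varphi(B)\hookrightarrow T$ with $B\twoheadrightarrow\varphi(B)$ a regular epimorphism, and then to use that $\varphi\in\mathrm{Im}(\tau_B)$ together with naturality of $\tau$ along the regular epi to transport the witnessing split extension down to one over $\varphi(B)$ — invoking regularity of the semi-abelian variety $\V$ to split/lift appropriately. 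Once that subterminal-type argument is in place, the matching of the two derived actions and the uniqueness statement are routine.
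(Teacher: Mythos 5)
Your proposal is correct and follows essentially the same route as the paper: in the forward direction both arguments define $(l',r')$ on $\varphi(B)$ by descending the derived action of $B$ along ${B\twoheadrightarrow\varphi(B)}$ and defer the well-definedness (i.e.\ that $l(b,x)$ and $r(x,b)$ depend only on $\varphi(b)$) to \cite[Proposition~4.5]{WAR}, exactly the technical point you flag as the heart of the matter; in the converse direction both define $\tau_B$ by sending an action to its associated $\varphi$, with injectivity given by the uniqueness clause. The only difference is presentational: you make the Yoneda dictionary and the naturality checks more explicit than the paper does.
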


\begin{proof}
	($\Rightarrow$) If $\V$ is weakly action representable, then for any object $X$ in it there exists a weak representation $(T,\tau)$. Let $B$ be an object of $\V$ which acts on $X$ and let $\varphi \colon B \rightarrow T$ be the corresponding acting morphism. Consider the split extension diagram
	\begin{equation*}
		\begin{tikzcd}
			0\ar[r]
			&X \arrow [r, "i"] \ar[d, "1_{X}"']
			&B \ltimes X \arrow[r, shift left, "\pi"] \ar[d, dashrightarrow, "\exists ! f"]&
			B \ar[r]\ar[l, shift left, "s"]\ar[d, "\widetilde{\varphi}"]
			&0 \\
			0\ar[r]
			&X \arrow [r, "i'"]
			&\varphi(B) \ltimes X \arrow[r, shift left, "\pi'"] &
			\varphi(B) \ar[r]\ar[l, shift left, "s'"]
			&0
		\end{tikzcd}
	\end{equation*}
	where $\widetilde{\varphi}$ is the corestriction of $\varphi$ to its image, $i'(x)=(0,x)$, $s'(\varphi(b))=(\varphi(c),0)$, where $(c,0)=s(b)$, and $f(b,x)=(\varphi(b),x)$. Then the action of $\varphi(B)$ on $X$ is defined by the pair of bilinear maps
	\[
		l'\colon \varphi(B) \times X \rightarrow X, \qquad r' \colon X \times \varphi(B) \rightarrow X
	\]
	where
	\[
		l'(\varphi(b),x)=s'(\varphi(b)) i'(x) = s(b) i(x) = l(b,x)
	\]
	and
	\[
		r'(\varphi(b),x)=i(x) s'(\varphi(b)) = i(x) s(b) = r(b,x),
	\]
	for every $b \in B$ and for every $x \in X$ (we use \cite[Proposition 4.5]{WAR} to see that $l'$ and $r'$ are well defined).

	($\Leftarrow$) Conversely, given an object $X$ of $\V$, a weak representation of $\SplExt(-,X)$ is given by $(T,\tau)$, where the component
	\[
		\tau_B \colon \SplExt(B,X) \rightarrowtail \Hom_\V(B,T)
	\]
	sends every action of $B$ on $X$ to the corresponding morphism $\varphi$. Moreover, $\tau_B$ is an injection since the morphism $\varphi$ is uniquely determined by the action of $B$ on $X$. Thus $\tau$ is a monomorphism of functors.
\end{proof}

\subsection*{Partial Algebras}

We end this chapter with a notion we shall use throughout the text.

\begin{definition}
	Let $X$ be an $\F$-vector space. A \emph{bilinear partial operation} on $X$ is a map
	\[
		\cdot \colon \Omega \rightarrow X,
	\]
	where $\Omega$ is a vector subspace of $X \times X$, which is bilinear on $\Omega$, i.e.\
	\[
		(\alpha_1 x_1 + \alpha_2 x_2) \cdot y = \alpha_1 x_1 \cdot y + \alpha_2 x_2 \cdot y
	\]
	for any  $\alpha_1, \alpha_2 \in \F$ and $x_1, x_2, y \in X$ such that $(x_1,y),(x_2,y) \in \Omega$ and
	\[
		x \cdot (\beta_1 y_1 + \beta_2 y_2) = \beta_1 x \cdot y_1 + \beta_2 x \cdot y_2
	\]
	for any $\beta_1, \beta_2 \in \F$ and $x, y_1, y_2 \in X$ such that $(x,y_1),(x,y_2) \in \Omega$.
\end{definition}

\begin{definition}
	A \emph{partial algebra} over $\F$ is an $\F$-vector space $X$ endowed with a bilinear partial operation
	\[
		\cdot \colon \Omega \rightarrow X.
	\]
	We denote it by $(X,\cdot, \Omega)$. When $\Omega = X \times X$ we say that the algebra is \emph{total}.
\end{definition}

Let $(X,\cdot,\Omega)$ and $(X',\ast,\Omega')$ be partial algebras over $\F$. A homomorphism of partial algebras is an $\F$-linear map $f\colon X \rightarrow X'$ such that $f(x\cdot y)=f(x)\ast f(y)$ whenever $(x,y)\in \Omega$, which tacitly implies that $(f(x),f(y))\in \Omega'$ (i.e.\ both $x\cdot y$ and $f(x)\ast f(y)$ are defined). We denote by $\PAlg$ the category whose objects are partial algebras and whose morphisms are partial algebra homomorphisms.

\begin{definition}
	We say that a partial algebra $(X, \cdot, \Omega)$ \emph{satisfies an identity} when that identity holds wherever the bilinear partial operation is well defined.
\end{definition}

For instance, a partial algebra $(X, \cdot, \Omega)$ is associative if
\[
	x \cdot (y \cdot z)=(x \cdot y) \cdot z
\]
for every $x$, $y$, $z \in X$ such that $(x,y)$, $(y,z)$, $(x,yz)$, $(xy,z) \in \Omega$.

\begin{remark}
	We observe that any variety of non-associative algebras $\cV$ has an obvious forgetful functor $U \colon \cV \rightarrow \PAlg$.
\end{remark}

\section{Commutative and anti-commutative algebras}\label{SecComm}

In this section we aim to study the (weak) representability of actions of some varieties of non-associative algebras which satisfy the commutative law or the anti-commutative law. As explained in Section \ref{SecPrel}, we may assume our variety satisfies the $\lambda/\mu$-rules, or equivalently is action accessible.

When $\mathcal{V}$ is either a variety of commutative or anti-commutative algebras, i.e.\ $xy=\varepsilon yx$ is an identity of $\cV$, with $\varepsilon = \pm 1$, the $\lambda/\mu$-rules reduce to
\[
	x(yz)=\alpha(xy)z+\beta(xz)y,
\]
for some $\alpha$, $\beta \in \F$. The following proposition is a representation theory exercise:

\begin{proposition}\label{prop1}
	Let $\cV$ be a non-abelian, action accessible, operadic variety of non-associative algebras.
	\begin{enumerate}
		\item If $\cV$ is a variety of commutative algebras, then $\cV$ is a either a subvariety of $\CAAlg$ or a subvariety of $\JJAlg$.
		\item If $\cV$ is a variety of anti-commutative algebras, then $\cV$ is either a subvariety of $\LieAlg$ or a subvariety of $\ACAntiAAlg$.\noproof
	\end{enumerate}
\end{proposition}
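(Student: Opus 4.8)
The plan is to reduce the statement to a short computation with degree-$3$ multilinear identities, read off via the representation theory of the symmetric group $S_3$. By \Cref{Theorem AC iff Orzech} and the discussion preceding the statement, action accessibility together with the (anti-)commutative law $xy=\varepsilon yx$ already forces $\cV$ to satisfy an identity $(\star)\colon x(yz)=\alpha(xy)z+\beta(xz)y$ for suitable $\alpha$, $\beta\in\F$; so it suffices to determine which classical degree-$3$ identities follow from $(\star)$ and the (anti-)commutative law, and to check that each possibility lands inside one of the claimed varieties. Over a field of characteristic different from $2$ and $3$, the degree-$3$ multilinear component $M$ of the free (anti-)commutative algebra on $\{x,y,z\}$ is a $3$-dimensional $\F[S_3]$-module, spanned by the classes of $(xy)z$, $(xz)y$ and $(yz)x$; it decomposes as $\mathrm{triv}\oplus\mathrm{std}$ in the commutative case and as $\mathrm{sgn}\oplus\mathrm{std}$ in the anti-commutative case, where $\mathrm{std}$ is the $2$-dimensional irreducible. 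The degree-$3$ multilinear consequences of $(\star)$ are exactly the $S_3$-submodule $R\subseteq M$ generated by the single vector $v\in M$ determined by $(\star)$.

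I would then argue as follows. Since $v\neq 0$, at least one of its isotypic projections $p_{\mathrm{triv}}(v)$ (resp.\ $p_{\mathrm{sgn}}(v)$) and $p_{\mathrm{std}}(v)$ is nonzero, and each of them lies in $R$. If the one-dimensional projection is nonzero, then $R$ contains the Jacobi expression $x(yz)+y(zx)+z(xy)$, so $\cV$ satisfies the Jacobi identity, hence $\cV\subseteq\JJAlg$ in the commutative case and $\cV\subseteq\LieAlg$ in the anti-commutative case. If instead $p_{\mathrm{std}}(v)\neq 0$, then by irreducibility $R\supseteq\mathrm{std}$, and since $\mathrm{std}$ is spanned by the (anti-)associativity relations, $\cV$ satisfies associativity, hence $\cV\subseteq\CAAlg$, resp.\ anti-associativity, hence $\cV\subseteq\ACAntiAAlg$. (When both projections are nonzero, $R=M$ and $\cV$ is two-step nilpotent, which sits in both of the relevant varieties; the non-abelian hypothesis serves only to keep the commutative and anti-commutative cases disjoint.) Concretely, the one-dimensional projection is produced by summing $(\star)$ over the three cyclic permutations of $x$, $y$, $z$ and simplifying with the (anti-)commutative law, which gives $(1-\alpha-\beta)\bigl(x(yz)+y(zx)+z(xy)\bigr)=0$ in the commutative case and $(1+\alpha-\beta)\bigl((xy)z-(xz)y+(yz)x\bigr)=0$ in the anti-commutative case; the standard component is obtained from suitable $\F$-linear combinations of the permuted copies of $(\star)$.

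The computations are routine; the points requiring attention are the sign bookkeeping in the anti-commutative case and the correct matching of each classical relation with an isotypic component of $M$. The main obstacle is really only the behaviour in characteristics $2$ and $3$, where $M$ need not split and $\mathrm{std}$ need not be irreducible: there one argues directly from $(\star)$ for the finitely many exceptional pairs $(\alpha,\beta)$ that the generic argument does not immediately cover, verifying by hand that $\cV$ still satisfies associativity or the Jacobi identity in the commutative case, and anti-associativity or the Jacobi identity in the anti-commutative case.
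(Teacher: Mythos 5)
Your argument is correct and is precisely the ``representation theory exercise'' that the paper has in mind: the proposition is stated there with no proof at all, and the intended argument is exactly the $\SSS_3$-analysis of the degree-$3$ multilinear component of the free (anti-)commutative algebra that you carry out, with the orbit of the single relator $v$ coming from $x(yz)=\alpha(xy)z+\beta(xz)y$ forced to contain either the Jacobi element or the full sum-zero (standard) subspace. The only remark worth adding is that your fallback for small characteristics is even easier than you suggest: working directly with the $\SSS_3$-orbit of $v$ (the difference $v-(y\,z)\cdot v$, the cyclic sum, and the difference of two cyclic translates) gives the dichotomy uniformly in \emph{every} characteristic, the Jacobi branch occurring exactly for $(\alpha,\beta)=(-1,-1)$ in the commutative case and $(\alpha,\beta)=(1,-1)$ in the anti-commutative case --- where $x(yz)=\alpha(xy)z+\beta(xz)y$ \emph{is} the Jacobi identity --- and the (anti-)associativity branch in all other cases.
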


\begin{remark}
	We observe that $\Nil_2(\CAlg)$ is a subvariety of both $\CAAlg$ and $\JJAlg$: in fact, from $x(yz)=(xy)z=0$ we may deduce that associativity holds and the Jacobi identity is satisfied:
	\[
		x(yz)+y(zx)+z(xy)=0+0+0=0.
	\]
	If $\bchar(\F) \neq 3$, then $\Nil_2(\CAlg)$ is precisely the intersection of the varieties $\CAAlg$ and $\JJAlg$. Indeed, let $\cV$ be a subvariety of both $\CAAlg$ and $\JJAlg$. Since commutativity, associativity and the Jacobi identity hold in $\cV$, we have
	\[
		(xy)z=x(yz)=-y(zx)-z(xy)=-x(yz)-(xy)z=-2(xy)z
	\]
	and thus $3(xy)z=3x(yz)=0$.

	An example of an algebra which lies in the intersection of $\CAAlg$ and $\JJAlg$ but which is not two-step nilpotent is the two-dimensional $\F_3$-algebra with basis $\lbrace e_1,e_2 \rbrace$ and bilinear multiplication determined by
	\[
		e_1^2=e_1e_2=e_2e_1=e_2^2=e_2.
	\]

	Likewise, $\Nil_2(\ACAlg)$ is a subvariety of both $\LieAlg$ and $\ACAntiAAlg$: from $x(yz)=(xy)z=0$ we may deduce anti-associativity and the Jacobi identity. If $\bchar(\F) \neq 3$, then $\Nil_2(\ACAlg)$ coincides with the intersection of the varieties $\LieAlg$ and $\ACAntiAAlg$. Indeed, let $\cV$ be a subvariety of both $\LieAlg$ and $\ACAntiAAlg$. Since anti-commutativity, anti-associativity and the Jacobi identity hold in $\cV$, we have
	\[
		(xy)z=-x(yz)=-(xy)z-y(xz)=-(xy)z+(yx)z =-2(xy)z
	\]
	and thus $3(xy)z=-3x(yz)=0$.

	When $\bchar(\F)=3$, it is possible to construct an algebra that lies in the intersection of $\LieAlg$ and $\ACAntiAAlg$ but which is not two-step nilpotent. Let $X$ be the algebra of dimension $7$ over $\F_3$ with basis
	\[
		\lbrace e_1,e_2,e_3,e_4,e_5,e_6,e_7 \rbrace
	\]
	and bilinear multiplication determined by
	\[
		e_1 e_2=-e_2 e_1=e_4, \; e_1 e_3=-e_3 e_1=-e_6, \; e_2 e_3=-e_3 e_2=e_5
	\]
	and
	\[
		e_1 e_5=-e_5 e_1 = e_2 e_6 =- e_6 e_2=e_3,e_4=-e_4 e_3=e_7.
	\]
	Then $X$ is a Lie algebra such that
	\[
		x(xy)=0
	\]
	for any $x$, $y \in X$ and, using the multi-linearisation process, one can check this identity is equivalent to anti-associativity if the characteristic of the field is different from $2$. This $X$ is not two-step nilpotent, since
	\[
		e_1(e_2 e_3)=e_1 e_5 =e_7.
	\]
\end{remark}

\begin{corollary}
	Let $\cV$ be an action accessible, operadic, quadratic variety of non-associative algebras and suppose that $\cV$ is not the variety $\AbAlg$ of abelian algebras.
	\begin{enumerate}
		\item[(1)] If $\cV$ is commutative, then it has to be one of the following varieties:  $\JJAlg$, $\CAAlg$, their intersection, or $\Nil_2(\CAlg)$.
		\item[(2)] If $\cV$ is anti-commutative, then it has to be one of the following varieties: $\LieAlg$, $\ACAntiAAlg$, their intersection, or $\Nil_2(\ACAlg)$.	\noproof
	\end{enumerate}
\end{corollary}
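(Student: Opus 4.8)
The plan is to feed the conclusion of \Cref{prop1} into the quadratic hypothesis together with a short computation with the multilinear degree-$3$ component of the relevant free algebra, regarded as an $S_3$-module. I will treat the commutative case; the anti-commutative case is obtained by replacing $\CAAlg$ by $\LieAlg$, $\JJAlg$ by $\ACAntiAAlg$, associativity by anti-associativity, $\Nil_2(\CAlg)$ by $\Nil_2(\ACAlg)$, and the permutation $S_3$-module below by its twist by the sign representation. By \Cref{prop1}, since $\cV$ is non-abelian, action accessible and operadic, a commutative such $\cV$ is a subvariety of $\CAAlg$ or of $\JJAlg$.

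Since $\cV$ is operadic and quadratic, it is determined by its multilinear identities in degrees $2$ and $3$. In degree $2$ the only possibility, $\cV$ being commutative but not $\AbAlg$, is commutativity itself; so $\cV$ is determined by commutativity together with its space $R$ of multilinear identities in the variables $x$, $y$, $z$. Using commutativity, every multilinear degree-$3$ monomial rewrites as one of $x(yz)$, $y(zx)$, $z(xy)$; in the free commutative algebra these three are linearly independent, so they form a basis of the ambient module $M$, on which $S_3$ acts by permuting them. Thus $R$ is an $S_3$-submodule of the permutation module $M$, and $\cV$ is recovered from $R$ in an inclusion-reversing, injective way within our list of candidates.

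Next I would enumerate the submodules of $M$. Distinguishing the cases $\bchar(\F)=3$ and $\bchar(\F)\neq 3$, one checks that they are exactly $0$, the line $L=\langle x(yz)+y(zx)+z(xy)\rangle$, the hyperplane $H=\{a\,x(yz)+b\,y(zx)+c\,z(xy) : a+b+c=0\}$, and $M$ itself. Here $L$ is spanned by the Jacobi element, so imposing it amounts to $\JJAlg$; $H$ is the kernel of evaluation into the free commutative associative algebra, whose multilinear degree-$3$ part is one-dimensional, so imposing it amounts to $\CAAlg$; all of $M$ corresponds to the identity $x(yz)=0$ and hence to $\Nil_2(\CAlg)$; and $R=0$ would give $\cV=\CAlg$, which is excluded because it fails the $\lambda/\mu$-rules and so is not action accessible by \Cref{Theorem AC iff Orzech}. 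Now \Cref{prop1} forces $R$ to contain $L$ (if $\cV\subseteq\JJAlg$) or $H$ (if $\cV\subseteq\CAAlg$); the submodules with this property are precisely $L$, $H$ and $M$, so $\cV\in\{\JJAlg,\CAAlg,\Nil_2(\CAlg)\}$. Since $\CAAlg\cap\JJAlg$ is the variety whose multilinear degree-$3$ identity space is $L+H$, which is again one of these three, it too appears in the list, proving item (1); the anti-commutative case is identical and proves item (2).

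The step needing care is the submodule computation in small characteristic. When $\bchar(\F)=3$ the permutation module (and its sign-twist) is not semisimple: one has $L\subseteq H$, so $\CAAlg$ and $\JJAlg$ become comparable and ``their intersection'' collapses onto one of the other entries; the preceding Remark records exactly this phenomenon (together with the genuinely distinct behaviour in $\bchar(\F)=3$, via explicit $\F_3$-examples) and, for $\bchar(\F)\neq 3$, the identification $\Nil_2(\CAlg)=\CAAlg\cap\JJAlg$. When $\bchar(\F)=2$, anti-commutativity degenerates to commutativity, so item (2) reduces to item (1) after relabeling. In all cases the submodules of $M$ are the four listed above, so the classification — and hence the statement, which already lists the intersection explicitly — stands.
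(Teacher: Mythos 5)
Your proof is correct and is essentially the argument the paper intends: the corollary is stated there without proof, as an immediate consequence of Proposition~\ref{prop1} together with the fact that an operadic quadratic (anti-)commutative variety is completely determined by its $\SSS_3$-submodule of multilinear degree-$3$ identities inside the three-dimensional space spanned by $x(yz)$, $y(zx)$, $z(xy)$. Your explicit enumeration of the submodules $0$, $L$, $H$, $M$ (including the characteristic-$3$ degeneration $L\subseteq H$, which matches the Remark preceding the corollary) is exactly the ``representation theory exercise'' the paper alludes to, carried out in full.
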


We already know that~$\LieAlg$ is action representable and that the actor of a Lie algebra $X$ is the Lie algebra $\Der(X)$ of derivations of~$X$. Therefore, we shall study the representability of actions of the varieties $\CAAlg$, $\JJAlg$, $\Nil_2(\CAlg)$, $\ACAntiAAlg$ and $\Nil_2(\ACAlg)$.

\subsection*{Commutative associative algebras}

The representability of actions of the variety of commutative associative algebras over a field was studied in~\cite{BJK2}, where F.~Borceux, G.~Janelidze and G.~M.~Kelly proved that it is not action representable. We want to extend this result proving that the variety $\CAAlg$ is weakly action representable. In Section~\ref{Section General Representability} this is further extended to general algebras over a field. We start by recalling the following result, where $U\colon \CAAlg\to \AAlg$ denotes the forgetful functor.

\begin{lemma}[\cite{BJK2}, proof of Theorem~2.6]\label{RemCAAlg}
	Let $X$ be a commutative associative algebra. There exists a natural isomorphism of functors from $\CAAlg^{\op}\to\Set$ which we denote
	\[
		\rho \colon \SplExt(-,X) \cong \Hom_{\AAlg}(U(-),\M(X)),
	\]
	where $\SplExt(-,X)=\SplExt_\CAAlg(-,X)$ and
	\[
		\M(X)=\lbrace f \in \End(X) \mid f(xy)=f(x)y, \quad \forall x,y \in X \rbrace
	\]
	is the associative algebra of \emph{multipliers} of $X$, endowed with the product induced by the usual composition of functions (see~\cite{Casas, MacLane58}).\noproof
\end{lemma}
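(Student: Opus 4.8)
The statement to prove is \Cref{RemCAAlg}: for a commutative associative algebra $X$, there is a natural isomorphism $\rho\colon \SplExt_{\CAAlg}(-,X)\cong\Hom_{\AAlg}(U(-),\M(X))$, where $\M(X)$ is the multiplier algebra. The plan is to make the correspondence between split extensions and derived actions (\Cref{lemma derived action}) completely explicit in the commutative associative setting, and then observe that a derived action is precisely an associative algebra morphism into $\M(X)$.

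First I would unwind what a derived action of $B$ on $X$ looks like when both $B$ and $X$ are commutative associative and $B\oplus X$ must again be commutative associative. By commutativity the two bilinear maps $l\colon B\times X\to X$ and $r\colon X\times B\to X$ satisfy $l(b,x)=r(x,b)$, so a derived action is a single bilinear map $B\times X\to X$, $(b,x)\mapsto b\ast x$. Writing out the associativity identity $(b,x)\cdot((b',x')\cdot(b'',x''))=((b,x)\cdot(b',x'))\cdot(b'',x'')$ for the multiplication~\eqref{product on semi direct} and extracting the $X$-component, the conditions that survive (using that $X$ is already associative and commutative, and $B$ is already associative) are exactly
\[
	b\ast(b'\ast x)=(bb')\ast x,\qquad b\ast(xx')=(b\ast x)x',\qquad (b\ast x)x' = x(b\ast x').
\]
The last of these is automatic from commutativity and the second, so the real content is: the map $\lambda\colon B\to\End(X)$, $b\mapsto(b\ast-)$, lands in $\M(X)$ (that is $b\ast(xx')=(b\ast x)x'$), and it is an algebra homomorphism ($\lambda(bb')=\lambda(b)\lambda(b')$). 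Conversely any associative algebra homomorphism $\varphi\colon U(B)\to\M(X)$ defines a derived action by $b\ast x\coloneqq\varphi(b)(x)$, and one checks the reverse implications hold. This sets up a bijection $\rho_B\colon \SplExt_{\CAAlg}(B,X)\xrightarrow{\ \sim\ }\Hom_{\AAlg}(U(B),\M(X))$ after passing through the isomorphism between split extensions and derived actions from \Cref{lemma derived action} and \Cref{rem_b*}.

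Next I would check naturality in $B$. Given $f\colon B'\to B$ in $\CAAlg$, the change-of-base $f^*$ on $\SplExt(-,X)$ corresponds, under the derived-action description, to precomposing the action $(b,x)\mapsto b\ast x$ with $f$, i.e.\ to the action $b'\ast'x\coloneqq f(b')\ast x$; on the other side, the induced map $\Hom_{\AAlg}(U(-),\M(X))$ applied to $U(f)$ is precomposition $\varphi\mapsto\varphi\circ U(f)$. Since $\rho$ sends $b\ast-$ to the homomorphism $b\mapsto(b\ast-)$, these precompositions match on the nose, so the naturality square commutes. Well-definedness (that $\rho_B$ does not depend on the chosen representative of an isomorphism class of split extensions) follows because isomorphic split extensions with the same kernel and cokernel give the same derived action, by protomodularity as noted after \Cref{eq:split_ext}.

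\textbf{Main obstacle.} The only genuinely delicate point is the bookkeeping in extracting the $X$-component of the associativity identity for~\eqref{product on semi direct} and verifying that the resulting system of conditions on $(l,r)$ is equivalent to ``$\lambda(B)\subseteq\M(X)$ and $\lambda$ is a homomorphism'' — in particular one must be careful that the mixed terms involving two elements of $B$ and one of $X$, and one element of $B$ and two of $X$, all collapse correctly using the commutativity and associativity already available in $B$ and in $X$. This is a finite and routine computation, so I would carry it out once in full and then simply record the equivalence; everything else is formal. No separate argument is needed for the claim in \Cref{RemCAAlg} that $\M(X)$ is an associative algebra under composition, since $\M(X)$ is visibly a subalgebra of $\End(X)$ and closure under composition is immediate from $f,g\in\M(X)\Rightarrow (fg)(xx')=f(g(x)x')=(fg)(x)x'$.
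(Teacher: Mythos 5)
Your proposal is correct, and since the paper itself gives no proof of Lemma~\ref{RemCAAlg} (it is delegated to the proof of Theorem~2.6 in~\cite{BJK2}), your argument supplies exactly the standard computation that the citation refers to: extracting from commutativity and associativity of $B\oplus X$ the conditions $l(b,x)=r(x,b)$, $b\ast(xx')=(b\ast x)x'$ and $(bb')\ast x=b\ast(b'\ast x)$, which together say precisely that $b\mapsto(b\ast-)$ is an associative algebra homomorphism into $\M(X)$. The details check out (in particular the redundancy of the mixed conditions via commutativity of $X$ and of $B$), so this is essentially the same approach as the cited source.
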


We recall that $\M(X)$ in general does not need to be a commutative algebra. For instance, let $X=\F^2$ be the abelian two-dimensional algebra, then~$\M(X) = \End(X)$ which is not commutative. However there are special cases where $\M(X)$ is an object of $\CAAlg$, such as when the \emph{annihilator} of~$X$ (which coincides with the categorical notion of center)
\[
	\Ann(X)=\lbrace x \in X \mid xy=0, \; \forall y \in X \rbrace
\]
is trivial or when $X^2=X$, where $X^2$ denotes the subalgebra of $X$ generated by the products $xy$ where $x$, $y\in X$. We refer the reader to~\cite{Casas} for further details.

\begin{theorem}[\cite{BJK2}, Theorem~2.6]\label{characterisation1}
	Let $X$ be a commutative associative algebra. The following statements are equivalent:
	\begin{tfae}
		\item $\M(X)$ is a commutative associative algebra;
		\item the functor $\SplExt(-,X)$ is representable.\noproof
	\end{tfae}
\end{theorem}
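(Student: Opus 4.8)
The plan is to run everything through the natural isomorphism $\rho\colon\SplExt(-,X)\cong\Hom_{\AAlg}(U(-),\M(X))$ supplied by \Cref{RemCAAlg}, where $U\colon\CAAlg\to\AAlg$ is the inclusion of the full subcategory of commutative algebras. The first thing to record is that $\M(X)$ is \emph{always} an associative algebra: it is closed under composition inside $\End(X)$, and composition is associative. So the real content of the statement is the equivalence between representability of $\SplExt(-,X)$ inside $\CAAlg$ and commutativity of $\M(X)$. The implication (i)$\Rightarrow$(ii) is then immediate: if $\M(X)$ is commutative it is an object of the full subcategory $\CAAlg\subseteq\AAlg$, so $\Hom_{\AAlg}(U(-),\M(X))=\Hom_{\CAAlg}(-,\M(X))$ as functors $\CAAlg^{\op}\to\Set$, and composing with $\rho$ exhibits $\M(X)$ as an actor of $X$.

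For (ii)$\Rightarrow$(i), assume $\SplExt(-,X)\cong\Hom_{\CAAlg}(-,[X])$ for some $[X]\in\CAAlg$, and compose with $\rho$ to obtain a natural isomorphism $\Phi\colon\Hom_{\CAAlg}(-,[X])\xrightarrow{\ \cong\ }\Hom_{\AAlg}(U(-),\M(X))$ of functors $\CAAlg^{\op}\to\Set$. Since the source is representable, the Yoneda lemma says $\Phi$ is post-composition with the morphism $u:=\Phi_{[X]}(1_{[X]})\in\Hom_{\AAlg}(U([X]),\M(X))$; that is, $\Phi_B(\varphi)=u\circ U(\varphi)$ for every $\varphi\colon B\to[X]$. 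In particular $u$ is a homomorphism of associative algebras, and it suffices to prove that $u$ is bijective: then $\M(X)\cong U([X])$ in $\AAlg$, which forces $\M(X)$ to be commutative, so $[X]\cong\M(X)$ and (ii) holds with the same actor as before.

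To prove $u$ bijective I would test $\Phi$ on the free one-generated object. Let $C$ be the free commutative associative algebra on a single generator $t$; since $C$ is spanned by $t,t^{2},t^{3},\dots$, it is simultaneously the free associative algebra on $t$, so evaluation at $t$ identifies $\Hom_{\CAAlg}(C,[X])$ with the underlying set of $[X]$ and $\Hom_{\AAlg}(U(C),\M(X))$ with the underlying set of $\M(X)$. Under these identifications the bijection $\Phi_C$ becomes exactly the underlying map of $u$, because $(u\circ U(\varphi))(t)=u(\varphi(t))$; hence $u$ is bijective. (Alternatively, surjectivity can be checked directly: for $f\in\M(X)$ the subalgebra of $\M(X)$ generated by $f$ is commutative, its inclusion into $\M(X)$ has the form $u\circ U(\varphi)$ for some $\varphi$, and evaluating at $f$ gives $f\in\Imm(u)$.)

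I expect the only delicate point to be this last step of (ii)$\Rightarrow$(i): recognising that the abstract representing object $[X]$, should it exist, is forced to have $\M(X)$ as its underlying associative algebra, and pinning this down by probing the Yoneda isomorphism with the free algebra on one generator. Everything else is routine translation through \Cref{RemCAAlg} together with the fact that $\CAAlg$ is a full subcategory of $\AAlg$.
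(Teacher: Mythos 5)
The paper itself offers no proof of this theorem: it is imported from \cite{BJK2} (Theorem~2.6) and marked \emph{noproof}, so there is nothing internal to compare your argument against. That said, your proof is correct and complete. You rightly note that associativity of $\M(X)$ is automatic (it is a subalgebra of $\End(X)$ under composition), so the equivalence is really about commutativity; (i)$\Rightarrow$(ii) follows at once from Lemma~\ref{RemCAAlg} together with the fullness of $\CAAlg$ in $\AAlg$; and in (ii)$\Rightarrow$(i) the Yoneda reduction to a single $\AAlg$-morphism $u\colon U([X])\to \M(X)$, followed by evaluation at the free one-generator algebra (the non-constant polynomials in one variable, which form the free object on one generator in both $\AAlg$ and $\CAAlg$), correctly yields that $u$ is a bijective algebra homomorphism, hence an isomorphism, forcing $\M(X)\cong U([X])$ to be commutative. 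It is worth pointing out that this probing device --- exploiting that the free one-generator algebra is the same in both categories --- is precisely the one the paper deploys in the unnamed lemma immediately following this theorem, where the injection $j\colon \M(X)\to T$ is extracted from a weak representation by evaluating at that free algebra; so your reconstruction is very much in the spirit of the surrounding text, and the delicate step you flag (pinning down the underlying algebra of the abstract representing object) is exactly where the work lies.
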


Since we have examples where $\M(X)$ is not commutative, we conclude that $\CAAlg$ is not action representable. We now want to prove that it is a weakly action representable category. We analyse what this means and then prove that the category does indeed fulfil these requirements.

For any commutative associative algebra $T$, the fully faithful embedding $U$ of the category $\CAAlg$ into $\AAlg$ induces a natural isomorphism
\[
	i\colon \Hom_{\CAAlg}(-,T) \cong \Hom_{\AAlg}(U(-),U(T))\colon \CAAlg^{\op}\to \Set\text{.}
\]

\begin{lemma}
	If the functor $\SplExt(-,X)$ admits a weak representation $(T,\tau)$, then there exists an injective function $j\colon \M(X)\to T$ such that for each commutative associative algebra $B$, the square
	\[
		\xymatrix{\SplExt(B,X) \ar[d]_-{\rho_B} \ar[r]^-{\tau_B} & \Hom_{\CAAlg}(B,T) \ar[d]_-{\cong}^{i_B}  \\
		\Hom_{\AAlg}(U(B),\M(X)) \ar[r]_-{j\circ(-)} & \Hom_{\AAlg}(U(B),U(T))
		}
	\]
	commutes.
\end{lemma}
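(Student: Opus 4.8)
The plan is to bundle the three natural transformations at hand into a single one and then extract $j$ by a Yoneda-style evaluation at a free object. First I would form the natural monomorphism of functors $\CAAlg^{\op}\to\Set$
\[
	\Psi \;=\; i\circ\tau\circ\rho^{-1}\colon\ \Hom_{\AAlg}(U(-),\M(X))\ \rightarrowtail\ \Hom_{\AAlg}(U(-),U(T)),
\]
using \Cref{RemCAAlg} for $\rho$, the hypothesis $(T,\tau)$ for $\tau$, and the isomorphism $i$ coming from full faithfulness of $U$. Each component $\Psi_B$ is injective, since $\tau_B$ is (a monomorphism of $\Set$-valued functors is a pointwise monomorphism) while $\rho_B$ and $i_B$ are bijections. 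Unwinding the definitions, the square in the statement commutes for every $B$ precisely when $\Psi_B$ equals post-composition with $j$, i.e.\ $\Psi_B(g)=j\circ g$ for every $\AAlg$-morphism $g\colon U(B)\to\M(X)$; so it suffices to produce an injective $j\colon\M(X)\to T$ with this property.

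To define $j$, I would evaluate $\Psi$ at the free commutative associative algebra $F$ on a single generator $t$ (concretely, the non-unital polynomial algebra $t\F[t]$). Since with one generator the free associative and the free commutative associative algebras coincide, $U(F)$ is still free on $\{t\}$ in $\AAlg$; hence evaluation at $t$ gives bijections $\Hom_{\AAlg}(F,\M(X))\cong\M(X)$ and $\Hom_{\AAlg}(F,U(T))\cong T$. Transporting $\Psi_F$ along these two bijections yields the desired function $j$, explicitly $j(f)=\Psi_F(g_f)(t)$ where $g_f\colon F\to\M(X)$ is the unique $\AAlg$-morphism with $g_f(t)=f$. Injectivity of $j$ is then immediate, being a composite of the injection $\Psi_F$ with two bijections.

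It remains to verify $\Psi_B(g)=j\circ g$. Fix $B$ in $\CAAlg$, an $\AAlg$-morphism $g\colon U(B)\to\M(X)$, and an element $b\in B$; let $b^{\sharp}\colon F\to B$ be the unique $\CAAlg$-morphism with $b^{\sharp}(t)=b$. Applying the naturality square of $\Psi$ along $b^{\sharp}$ to the element $g$ and then evaluating at $t$, the left-hand side is $\Psi_F\bigl(g\circ U(b^{\sharp})\bigr)(t)=j(g(b))$, because $g\circ U(b^{\sharp})$ sends $t$ to $g(b)$ and so corresponds to $g(b)$ under the evaluation bijection; the right-hand side is $\bigl(\Psi_B(g)\circ U(b^{\sharp})\bigr)(t)=\Psi_B(g)(b)$. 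As $b$ was arbitrary this gives $\Psi_B(g)=j\circ g$; in particular this common value is an $\AAlg$-morphism, so the bottom arrow $j\circ(-)$ of the square indeed lands in $\Hom_{\AAlg}(U(B),U(T))$, and the square commutes by construction of $\Psi$.

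I expect the only genuinely subtle point — rather than a hard one — to be the observation that the free object on a single generator agrees in $\AAlg$ and in $\CAAlg$, which is what makes $U(F)$ free and the two evaluation maps bijective; without it one could neither read off $j$ on all of $\M(X)$ nor conclude injectivity. Everything else is routine bookkeeping with one naturality square.
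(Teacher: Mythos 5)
Your proposal is correct and follows essentially the same route as the paper: both define $j$ as the component of $i\circ\tau\circ\rho^{-1}$ at the free (commutative) associative algebra on one generator, transported along the evaluation bijections, and both verify the square by naturality along the morphisms $\F[x]\to B$ induced by elements $b\in B$. Your element-wise evaluation at $t$ is just a rephrasing of the paper's appeal to the joint epimorphy of the family $(b\colon\F[x]\to B)_{b\in B}$.
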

\begin{proof}
	The free associative $\F$-algebra on a single generator is the algebra $\F[x]$ of non-constant polynomials in a single variable $x$, which since it is commutative is also the free algebra on a single generator in $\CAAlg$. We find $j$ as the injective function
	\[
		V(\M(X))\cong \Hom_{\AAlg}(\F[x],\M(X))\to \Hom_{\AAlg}(\F[x],U(T))\cong V(U(T))
	\]
	where $V(A)$ denotes the underlying set of an algebra $A$ and the function in the middle is the $\F[x]$-component of the monomorphism of functors
	\[
		i \circ \tau \circ \rho^{-1}\colon \Hom_{\AAlg}(U(-),\M(X)) \rightarrowtail \Hom_{\AAlg}(U(-),U(T)).
	\]
	Now each $b\in B$ induces a morphism $b\colon \F[x]\to B$, and the collection of morphisms ${(b\colon \F[x]\to B)_{b\in B}}$ is jointly epic. Hence its image
	\[
		(\Hom_{\AAlg}(U(B),U(T))\to \Hom_{\AAlg}(U(\F[x]),U(T)))_{b\in B}
	\]
	through the contravariant functor $\Hom_{\AAlg}(U(-),U(T))$ is a jointly monic collection of arrows. It thus suffices that for each $b\in B$, the outer rectangle in the diagram
	\[
		\xymatrix@C=5em{\SplExt(B,X) \ar[d]_-{\rho_B} \ar[r]^-{\tau_B} & \Hom_{\CAAlg}(B,T) \ar[d]_-{\cong}^{i_B}  \\
		\Hom_{\AAlg}(U(B),\M(X)) \ar[d]_-{(-)\circ U(b)} \ar[r]_-{j\circ(-)} & \Hom_{\AAlg}(U(B),U(T)) \ar[d]^-{(-)\circ U(b)} \\
		\Hom_{\AAlg}(U(\F[x]),\M(X)) \ar[d]_\cong \ar[r]_-{i_{\F[x]} \circ \tau_{\F[x]} \circ \rho^{-1}_{\F[x]}} & \Hom_{\AAlg}(U(\F[x]),U(T)) \ar[d]^\cong\\
		V(\M(X)) \ar[r]_-{j} & V(U(T))
		}
	\]
	commutes in $\Set$. This is an immediate consequence of the naturality of the transformations involved.
\end{proof}

\begin{remark}
	The above proof can be modified to show that the function $j$ is in fact a vector space monomorphism. If it were moreover an algebra monomorphism, then this would yield a proof that all $\M(X)$ are commutative, which is false by the above-mentioned example. Thus we would be able to conclude that $\CAAlg$ is not weakly action representable. Theorem~\ref{Theorem CAAlg WRA} below proves that this is wrong. Hence~$j$ cannot preserve the algebra multiplication in general.
\end{remark}

Each action $\xi$ of a commutative associative algebra $B$ on $X$ gives rise to a morphism $\rho_B(\xi)\colon U(B)\to \M(X)$ in $\AAlg$. If the actions in $\CAAlg$ are weakly representable, then $\xi$ is also determined by a morphism of commutative associative algebras $\tau_B(\xi)\colon B\to T$. The above lemma tells us that $j\circ \rho_B(\xi)=\tau_B(\xi)$. Note that here we drop the $i_B$ for the sake of clarity.

Each $\rho_B(\xi)\colon U(B)\to \M(X)$ is the composite of a surjective commutative associative algebra map $\rho'_B(\xi)\colon U(B)\to M_\xi$ and the canonical inclusion of a subalgebra $M_\xi$ of $\M(X)$. We find a diagram of subalgebras of $\M(X)$ indexed over the commutative associative algebra actions on $X$. Note that since trivial actions exist, the image in $\AAlg$ of the diagram $(M_\xi)_\xi$ actually consists of all commutative subalgebras of $\M(X)$, with the canonical inclusions between them. We may re-index and view $(M_\xi)_\xi$ as a diagram in $\AAlg$ over the thin category of commutative subalgebras of $\M(X)$.

By the above, the $M_\xi$ further include into $T$ via $j$. For each $\xi$, an image factorisation of $\tau_B(\xi)\colon B\to T$ is given by the surjective algebra map $\rho'_B(\xi)\colon U(B)\to M_\xi$ followed by the inclusion of $M_\xi$ into $\M(X)$ composed with~$j$. We denote this function $\mu_{M_\xi}\colon M_\xi\to T$ and note that it only depends on the object~$M_\xi$. (That is to say, if $\xi$ and $\psi$ are two $B$-actions such that $M_\xi=M_\psi$, then the induced inclusions into $T$ coincide as well.) \emph{A~priori} this $\mu_{M_\xi}$ is only an injective \emph{map}, but since $\tau_B(\xi)$ and $\rho'_B(\xi)$ are morphisms of algebras and $\rho'_B(\xi)$ is a surjection, that injection is a monomorphism of commutative associative algebras. Furthermore, the $\mu_{M_\xi}$ form a cocone on the diagram of all commutative subalgebras of $\M(X)$ with vertex $T$.

Recall that for a diagram in a category, an \emph{amalgam} is a monic cocone, i.e.\ a cocone which is a monomorphic natural transformation. This means that each component of that cocone is a monomorphism, which implies that all the morphisms of the given diagram were monomorphisms to begin with. Note that in a category with colimits, an amalgam for a diagram exists if and only if its colimit cocone is such an amalgam. A category is said to have the \emph{amalgamation property} (AP) when each span of monomorphisms admits an amalgam; equivalently, for each pushout square
\[
	\xymatrix{I \ar[r]^t \ar[d]_s & T\ar[d]^-{\iota_T}\\
	S \ar[r]_-{\iota_S} & S+_IT}
\]
if $s$ and $t$ are monomorphisms then so are $\iota_S$ and $\iota_T$. It is known that neither the category of associative algebras, nor the category of commutative associative algebras satisfies the condition (AP)---see~\cite{kiss} for an overview of examples and references to the rich literature on the subject.

This is as follows related to the problem at hand. The associative algebra $\M(X)$ is an amalgam in $\AAlg$ of the diagram consisting of the commutative subalgebras $M_\xi$ of $\M(X)$. So if the functor $\SplExt(-,X)$ admits a weak representation $(T,\tau)$, then the natural transformation $\tau$ factors through the diagram $(M_\xi)_\xi$ as explained above, and we see that $(T,\tau)$ restricts to an amalgam of that diagram in the category $\CAAlg$.

Thus we find a necessary condition for weak representability of actions in the category $\CAAlg$: we need that for each commutative associative algebra $X$, the diagram $(M_\xi)_\xi$ of commutative subalgebras of the associative algebra $\M(X)$ not only admits the amalgam $\M(X)$ in the category $\AAlg$; it should also admit an amalgam $T$ in~$\CAAlg$. Actually, the converse also holds:

\begin{proposition}\label{Proposition Amalgam}
	For a commutative associative algebra $X$, a weak representation $(T,\tau)$ of $\SplExt(-,X)$ exists if and only if an amalgam in $\CAAlg$ exists for the diagram of commutative subalgebras of $\M(X)$.
\end{proposition}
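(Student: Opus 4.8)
The forward direction has essentially been established in the discussion preceding the statement, so the work is to package it cleanly and then prove the converse. For the forward direction, I would argue as follows. Suppose $(T,\tau)$ is a weak representation of $\SplExt(-,X)$. As explained above, every action $\xi$ of a commutative associative algebra $B$ on $X$ factors through a surjection $\rho'_B(\xi)\colon U(B)\twoheadrightarrow M_\xi$ onto a commutative subalgebra $M_\xi\leq \M(X)$, and the acting morphism $\tau_B(\xi)$ factors as $\mu_{M_\xi}\circ \rho'_B(\xi)$ for a monomorphism of commutative associative algebras $\mu_{M_\xi}\colon M_\xi\rightarrowtail T$ depending only on the object $M_\xi$. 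Taking trivial actions into account, as $\xi$ ranges over all actions and all $B$ the subalgebras $M_\xi$ range over \emph{all} commutative subalgebras of $\M(X)$, with the canonical inclusions between them, and the maps $\mu_{M_\xi}$ assemble into a cocone with vertex $T$. Compatibility of these maps with inclusions follows from naturality of $\tau$ together with the fact that, by \Cref{RemCAAlg}, $\rho$ identifies pulling back split extensions with precomposition of maps into $\M(X)$. Since each $\mu_{M_\xi}$ is a monomorphism, this cocone is an amalgam in $\CAAlg$ of the diagram of commutative subalgebras of $\M(X)$.

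For the converse, suppose such an amalgam $(T,(\mu_M)_M)$ exists, where $M$ ranges over commutative subalgebras of $\M(X)$. I would define $\tau_B\colon \SplExt(B,X)\to \Hom_{\CAAlg}(B,T)$ by sending (the isomorphism class of) a split extension with derived action $\xi$ to the composite
\[
	B \xrightarrow{\;\widetilde{\rho_B(\xi)}\;} M_\xi \xrightarrow{\;\mu_{M_\xi}\;} T,
\]
where $\widetilde{\rho_B(\xi)}$ is the corestriction of $\rho_B(\xi)\colon U(B)\to \M(X)$ to its image $M_\xi$ (which is automatically a morphism in $\CAAlg$, since $U$ is fully faithful and $B$ is commutative). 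One must check three things. First, \emph{well-definedness}: isomorphic split extensions have the same derived action up to the identification of \Cref{RemCAAlg}, so $\tau_B$ is well defined on isomorphism classes. Second, \emph{naturality} in $B$: given $f\colon B'\to B$, the change of base $f^*$ corresponds under $\rho$ to precomposition with $U(f)$, so $\rho_{B'}(f^*\xi)=\rho_B(\xi)\circ U(f)$; hence $M_{f^*\xi}\leq M_\xi$ and the required square commutes because $\mu$ is a cocone, i.e.\ $\mu_{M_\xi}$ restricted to $M_{f^*\xi}$ equals $\mu_{M_{f^*\xi}}$. Third, \emph{injectivity} of each $\tau_B$: since $\widetilde{\rho_B(\xi)}$ is surjective and $\mu_{M_\xi}$ is a monomorphism, the composite $\mu_{M_\xi}\circ\widetilde{\rho_B(\xi)}$ determines $\rho_B(\xi)$, and $\rho_B$ is a bijection by \Cref{RemCAAlg}; so $\tau_B$ is injective. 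This exhibits $\tau\colon \SplExt(-,X)\rightarrowtail \Hom_{\CAAlg}(-,T)$ as a monomorphism of functors, i.e.\ $(T,\tau)$ is a weak representation.

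The only genuinely delicate point — and the step I expect to require the most care in writing up — is the bookkeeping that makes $(\mu_M)_M$ a genuine cocone on the full diagram of commutative subalgebras, indexed as a thin category, rather than an incoherent family: one must verify that the inclusion $M_\xi\rightarrowtail T$ depends only on the subalgebra $M_\xi$ and not on the action $\xi$ or the algebra $B$ producing it, and that the triangles over all inclusions $M'\leq M$ of commutative subalgebras commute. In the forward direction this coherence is squeezed out of naturality of $\tau$ applied to the morphisms of split extensions coming from algebra maps between the various semi-direct products; in the converse direction it is exactly the cocone hypothesis that is being used. Everything else is a routine translation through the natural isomorphism $\rho$ of \Cref{RemCAAlg} between split extensions and maps into $\M(X)$.
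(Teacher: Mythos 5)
Your proposal follows essentially the same route as the paper: the forward direction is read off from the preceding discussion (the monomorphisms $\mu_{M_\xi}$ assemble into a monic cocone with vertex $T$), and the converse defines $\tau_B(\xi)=\mu_{M_\xi}\circ\rho'_B(\xi)$ and checks naturality via the cocone property and injectivity via the monomorphy of the $\mu_{M_\xi}$. The only place where you are quicker than the paper is the injectivity of $\tau_B$: asserting that the composite $\mu_{M_\xi}\circ\widetilde{\rho_B(\xi)}$ ``determines'' $\rho_B(\xi)$ presupposes that $\tau_B(\xi)=\tau_B(\psi)$ already forces $M_\xi=M_\psi$ (otherwise you are cancelling two \emph{different} monomorphisms with the same image in $T$); the paper closes this by invoking uniqueness of image factorisations together with the fact that the diagram of commutative subalgebras of $\M(X)$ is a thin category, and only then cancels $\mu_{M_\xi}$ to get $\rho'_B(\xi)=\rho'_B(\psi)$. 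You correctly flag the cocone coherence as the delicate bookkeeping point, so this is a minor elision rather than a wrong turn, but it should be spelled out.
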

\begin{proof}
	We already explained that any weak representation of $\SplExt(-,X)$ restricts to such an amalgam. So let us assume that a commutative amalgam for the diagram of commutative subalgebras of $\M(X)$ exists. For each commutative associative algebra action $\xi$ of an object~$B$ on $X$, we let $\tau_B(\xi)\colon B\to T$ be the composite of $\rho'_B(\xi)\colon U(B)\to M_\xi$ with the inclusion $\mu_{M_\xi}\colon M_\xi\to T$ of $M_\xi$ into the amalgam $T$.

	The thus defined $\tau$ is a natural transformation by the naturality of both $\rho'$ and the cocone components in the amalgam. Note that if two maps, say $\rho_B(\xi)$ and $\rho_C(\psi)$, to $\M(X)$ have the same image subalgebra $M_\xi=M_\psi$ of $\M(X)$, then by naturality of $\rho$ and the fact that the inclusion of $M_\xi$ into $\M(X)$ is a monomorphism, for any equivariant map $f\colon B\to C$ we have that the square on the left
	\[
		\xymatrix{B \ar[d]_-{f} \ar[r]^-{\rho'_B(\xi)} & M_\xi \ar@{=}[d] \ar[r]^-{\mu_{M_\xi}} & T \ar@{=}[d]\\
		C \ar[r]_-{\rho'_C(\psi)}  & M_\psi \ar[r]_-{\mu_{M_\psi}} & T}
	\]
	commutes. The commutativity of the entire diagram proves naturality of $\tau$.

	We still have to prove that the components of $\tau$ are monomorphisms: two different actions $\xi$ and $\psi$ of $B$ on $X$ give rise to two different maps $\tau_B(\xi), \tau_B(\psi)\colon B\to T$. Suppose, on the contrary, that $\tau_B(\xi)=\tau_B(\psi)$. Then by uniqueness of image factorisations, the images of $\mu_{M_\xi}\colon M_\xi\to T$ and  $\mu_{M_\psi}\colon M_\psi\to T$ are isomorphic subobjects of $T$. Now the image in $\AAlg$ of the diagram $(M_\xi)_\xi$ is a thin category, so that $M_\xi=M_\psi$. Hence $\mu_{M_\xi}\circ \rho'_B(\xi)=\tau_B(\xi)=\tau_B(\psi)=\mu_{M_\psi}\circ\rho'_B(\psi)=\mu_{M_\xi}\circ\rho'_B(\psi)$, which implies $\rho'_B(\xi)=\rho'_B(\psi)$. But then the actions $\xi$ and $\psi$ are equal, since $\rho$ is a natural isomorphism by Lemma~\ref{RemCAAlg}.
\end{proof}

Thus we see that the problem of weak representability of actions of $\CAAlg$ amounts to proving that an amalgam in $\CAAlg$ exists for the diagram of commutative subalgebras of $\M(X)$ for any object $X$. We are actually going to prove something a bit stronger: namely, that an amalgam in $\CAAlg$ exists for any diagram of commutative associative algebras for which an amalgam exists in $\AAlg$. The essence of the proof is contained in the following special case.

\begin{theorem}\label{Thm Amalgam}
	If $S\ot I \to T$ is a span of commutative associative algebras for which an amalgam exists in $\AAlg$, then it has an amalgam in $\CAAlg$.
\end{theorem}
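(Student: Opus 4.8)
The plan is to manufacture a commutative amalgam out of the given associative one. The first move is a reduction to the universal case. Since \emph{some} amalgam $A$ of $S\ot I\to T$ exists in $\AAlg$, the pushout $P:=S+_I T$ taken in $\AAlg$ is itself an amalgam: the universal property yields a comparison morphism $P\to A$ under $S$ and $T$, and since the two coprojections $S\to A$ and $T\to A$ are monomorphisms (they are the legs of the amalgam $A$), the coprojections $S\to P$ and $T\to P$ are monomorphisms as well. So we may as well take $A=P$, the amalgamated free product of $S$ and $T$ over $I$, which contains $S$ and $T$ as subalgebras that generate it.

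The candidate commutative amalgam is $Q:=P/[P,P]$, the abelianisation of $P$. Since abelianisation is left adjoint to the inclusion $\CAAlg\hookrightarrow\AAlg$ it preserves colimits, so $Q$ is exactly the pushout $S+_I T$ formed in $\CAAlg$, and it comes equipped with a cocone $\bar\iota_S\colon S\to Q$, $\bar\iota_T\colon T\to Q$ over the span. It then remains to prove that $\bar\iota_S$ and $\bar\iota_T$ are monomorphisms, i.e.\ that $S\cap[P,P]=0=T\cap[P,P]$ inside $P$; everything follows from this, since a monic cocone over the span is precisely an amalgam in $\CAAlg$.

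To establish $S\cap[P,P]=0$ I would exploit the structure of the amalgamated free product. Because $S$ and $T$ are commutative and the common image of $I$ lies in $S\cap T$, all commutators internal to $S$, to $T$, or to $I$ vanish, so the two-sided ideal $[P,P]$ is generated by the mixed commutators $[s,t]=st-ts$ with $s\in S$, $t\in T$; choosing $\F$-linear splittings $S=I\oplus S_0$ and $T=I\oplus T_0$ one reduces further to generators with $s\in S_0$, $t\in T_0$. Equip $P$ with the word-length filtration $P_1\subseteq P_2\subseteq\cdots$, where $P_1=S+T$ and $P_mP_n\subseteq P_{m+n}$; the generating commutators lie in $P_2$. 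The heart of the argument is a normal-form analysis of $P=S*_I T$ — a Bergman diamond-lemma or Cohn reduced-word computation — showing that such a commutator has nonzero image in $\mathrm{gr}_2P=P_2/P_1$ and that the ideal it generates meets $P_1$ trivially; since $S,T\subseteq P_1$ this yields $S\cap[P,P]=T\cap[P,P]=0$, and $(Q;\bar\iota_S,\bar\iota_T)$ is the desired amalgam.

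The main obstacle is exactly this last analysis. Over a common subalgebra $I$ that is neither central nor, in general, an $I$-module direct summand of $S$ or of $T$, the amalgamated free product carries only a filtration (no grading), and setting up a well-behaved normal form is the delicate point of Cohn's amalgamation theory: one must check that a product $s_0t_0$ with $s_0\in S_0$, $t_0\in T_0$ genuinely has filtration degree $2$ and does not collapse into $P_1$. Once the normal form is secured the bookkeeping is routine, because a degree-$\geq 2$ ideal cannot reach the degree-$1$ part $P_1$. (Should the naive choice $Q=P/[P,P]$ unexpectedly lose part of $S$ or $T$, the same length filtration indicates which commutative subquotient of $P$ to use instead; but I expect the abelianisation of the pushout to already work.)
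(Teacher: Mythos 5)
Your reduction is sound as far as it goes: replacing the given amalgam by the pushout $P=S+_I^{\AAlg}T$ itself, identifying the pushout in $\CAAlg$ with the commutative reflection $P/\langle[P,P]\rangle$ (the reflection is a left adjoint and fixes $S$, $T$, $I$), and observing that everything reduces to $S\cap\langle[P,P]\rangle=0=T\cap\langle[P,P]\rangle$ is all correct, and the commutator ideal is indeed generated by the mixed commutators $[s,t]$. The gap is that the entire burden of the proof now sits in the ``normal-form analysis'' of $S*_IT$, which you assert rather than carry out, and which fails on two counts. First, no reduced-word normal form for the amalgamated free product is available here: $I$ is an arbitrary common subalgebra, not assumed to be an $I$-bimodule direct summand of $S$ or $T$ (nor flat), and this is exactly the situation where Cohn's theory breaks down --- the hypothesis that $S$ and $T$ embed in $P$ is strictly weaker than having a basis of reduced words for $P$. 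Second, the principle you lean on, that ``a degree-$\geq 2$ ideal cannot reach the degree-$1$ part,'' is true for graded algebras but false for merely filtered ones: an element of $\langle[P,P]\rangle$ is a sum $\sum p_i[s_i,t_i]q_i$ whose top filtration components may cancel, so the sum can drop into $P_1$ even though each generator lies in $P_2$; ruling this out is a Gr\"obner-basis/diamond-lemma statement about the generators $[s_0,t_0]$ that is essentially equivalent to what you are trying to prove. Your closing caveat (``should $Q=P/[P,P]$ unexpectedly lose part of $S$ or $T$\dots'') concedes that the key point is open.

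The paper's proof sidesteps the amalgamated free product entirely by working one level up, in the coproducts over $\F$, where explicit vector-space descriptions exist: $S+^{\AAlg}T=S\oplus T\oplus U$ (all mixed tensors) and $S+^{\CAAlg}T=S\oplus T\oplus(S\tensor T)$. The reflection unit $\eta_{S+T}$ then has a canonical \emph{linear} splitting $\sigma$, and a short direct computation (using commutativity of $S$ and $T$ twice) shows that $\sigma$ carries the commutative amalgamation ideal $K=\langle\underline{i}-\overline{i}\rangle$ into the associative one $J$. Since $S\cap J=0$ is precisely the hypothesis that the amalgam exists in $\AAlg$, \Cref{Lemma Mono} finishes the argument with no normal forms and no associated graded. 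If you want to rescue your approach, the lesson is to transport the problem from $P=S*_IT$ back to the free product $S+T$, where $S\cap(J+C')=0$ (with $C'$ the commutator ideal of $S+T$) is the statement actually needed --- and at that point you will find yourself constructing something very much like the paper's splitting $\sigma$.
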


The proof depends on the following lemma.

\begin{lemma}\label{Lemma Mono}
	Let $f\colon X\to Y$ and $g\colon Y\to Z$ be morphisms in a semi-abelian category. The composite $g\circ f\colon X\to Z$ is a monomorphism if and only if $f$ is a monomorphism and $\Imm(f)\cap \Ker(g)$ is trivial.
\end{lemma}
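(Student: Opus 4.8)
The plan is to reduce the statement to a computation with kernels, using that in a semi-abelian (indeed, pointed protomodular) category a morphism is a monomorphism precisely when its kernel is the zero object, and that the kernel of a composite $g\circ f\colon X\to Z$ is the pullback of $\ker(g)\colon\Ker(g)\to Y$ along $f$. Concretely, writing $k\colon K\to X$ for $\ker(g\circ f)$, the equality $g\circ f\circ k=0$ forces $f\circ k$ to factor through $\ker(g)$, say $f\circ k=\ker(g)\circ\ell$ for a unique $\ell\colon K\to\Ker(g)$. With this set-up, the lemma amounts to showing that $K=0$ if and only if $f$ is a monomorphism and $\Imm(f)\cap\Ker(g)=0$.

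For the ``if'' direction, assume $f$ is monic and $\Imm(f)\cap\Ker(g)=0$. Then $f\circ k\colon K\to Y$ is a composite of monomorphisms ($k$ being a kernel), hence monic, so it exhibits $K$ as a subobject of $Y$; by construction this subobject is contained in $\Imm(f)$ (via $k$, and using that $f$ monic means $\Imm(f)$ is represented by $f$ itself) and in $\Ker(g)$ (via $\ell$), hence in $\Imm(f)\cap\Ker(g)=0$. Therefore $f\circ k=0$, and since $f$ is monic, $k=0$; a monomorphism that is a zero morphism has zero domain, so $K=0$ and $g\circ f$ is a monomorphism.

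For the ``only if'' direction, if $g\circ f$ is monic then so is $f$, since $f\circ u=f\circ v$ implies $g\circ f\circ u=g\circ f\circ v$ and hence $u=v$. Now $\Imm(f)\cap\Ker(g)$ is a subobject of $Y$ contained in $\Imm(f)$, and because $f$ monic identifies $\Imm(f)$ with $X$, it lifts to a monomorphism $m\colon N\to X$ with $f\circ m$ the inclusion $\Imm(f)\cap\Ker(g)\rightarrowtail Y$. Since that inclusion also factors through $\Ker(g)$, we get $g\circ f\circ m=0=g\circ f\circ 0$, so $m=0$ as $g\circ f$ is monic, whence $N=0$ and $\Imm(f)\cap\Ker(g)=0$.

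The argument is essentially a diagram chase, so I do not expect a genuine obstacle; the only points that need care are the two standard categorical inputs recalled at the outset --- that $\ker(g\circ f)$ is the pullback of $\ker(g)$ along $f$, and that ``trivial kernel'' is equivalent to ``monomorphism'' in our setting (this is where protomodularity is used) --- together with the subobject bookkeeping, in particular the implicit identification of $f\bigl(f^{-1}(\Ker(g))\bigr)$ with $\Imm(f)\cap\Ker(g)$ underlying the ``if'' direction, which relies on the regularity of the ambient category. All of these hold in any semi-abelian category, and the remaining facts used (a composite of monomorphisms is a monomorphism, a monic zero morphism has zero domain) are immediate.
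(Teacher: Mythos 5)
Your proof is correct and takes essentially the same approach as the paper's: both arguments reduce the statement to the triviality of $\Ker(g\circ f)$, identify that kernel as the pullback of $\Ker(g)$ along $f$, and recognise this pullback as $\Imm(f)\cap\Ker(g)$ once $f$ is known to be monic. The paper compresses this into three lines, whereas you unpack the two directions as explicit subobject chases, but the content is identical.
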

\begin{proof}
	Note that if $g\circ f$ is a monomorphism, then so is $f$. So we may assume that $f$ is a monomorphism in either case. The composite $g \circ f$ is a monomorphism precisely when $\Ker(g \circ f)$ is trivial. Now this kernel is a pullback of $\Ker(g)$ along~$f$. Since $f$ is a monomorphism, that pullback is $\Imm(f)\cap \Ker(g)$. So $\Ker(g\circ f)$ is zero if and only if $\Imm(f)\cap \Ker(g)$ is zero.
\end{proof}

\begin{proof}[Proof of Theorem~\ref{Thm Amalgam}]
	Let $S\ot I \to T$ be such a span. Recall that an amalgam in either category exists if and only if the $S$ and $T$ components of the induced pushout cocone in either category are monic. We focus on the case $\iota_S\colon S\to S+_IT=S+^{\AAlg}_IT$ which we assume to be monic. The question is whether its composite with the reflection unit $\eta_{S+_IT}\colon S+^{\AAlg}_IT\to S+^{\CAAlg}_IT$ is still monic. We are going to prove that the answer is yes, indeed it is.

	Consider the following morphism of short exact sequences in $\AAlg$:
	\[
		\xymatrix{0 \ar[r] & J \ar[d]_{\eta_{S+T}|_J} \ar[d] \ar[r] & S+T \ar[d]_{\eta_{S+T}} \ar[r] & S+_IT \ar[d]^{\eta_{S+_IT}} \ar[r] & 0\\
		0 \ar[r] & K \ar[r] & S+^{\CAAlg}T \ar[r] & S+^{\CAAlg}_IT \ar[r] & 0}
	\]
	As a vector space, the coproduct $S+T$ of $S$ and $T$ in $\AAlg$ is $S\oplus T\oplus U$ where $U=(S\tensor T)\oplus (T\tensor S)\oplus (S\tensor T\tensor S)\oplus \cdots$ contains all the tensors. The coproduct $S+^{\CAAlg}T$ of $S$ and $T$ in $\CAAlg$ is $S\oplus T\oplus (S\tensor T)$, so that $\eta_{S+T}$ admits a canonical splitting $\sigma\colon S+^{\CAAlg}T\to S+T$ in $\Vect$ which commutes with the inclusions of $S$, $T$ and $S\tensor T$.

	We note that $K$ is the ideal of $S+^{\CAAlg}T=S\oplus T\oplus (S\tensor T)$ generated by the elements of the form $\underline{i}-\overline{i}$, where $\underline{i}$ is $i\in I$ viewed as an element of~$S$, while $\overline{i}$ is $i$ viewed as an element of $T$. Let $G$ denote the set of generators $\{\underline{i}-\overline{i}\mid i\in I\}$. The algebra $J$ is generated by $G$ as well, but now as an ideal of $S+T=S\oplus T\oplus U$. It follows that $\sigma(K)\subseteq J$---even though $\sigma$ is not a morphism of algebras. We give a detailed proof of this claim. We know that $K$ consists of all elements of the form $x_1g_1+\cdots +x_ng_n$ where $x_1$, \dots, $x_n\in S+^{\CAAlg}T$ and $g_1$, \dots, $g_n\in G$. Since $\sigma$ is a morphism of abelian groups, it suffices that $\sigma(xg)$ belongs to $J$ for all $x\in S+^{\CAAlg}T$ and $g\in G$. Now each $x\in S+^{\CAAlg}T$ is of the form $(s_1,t_1,s'_1\tensor t'_1)+\cdots+(s_n,t_n,s'_n\tensor t'_n)$ with $s_1$, \dots, $s_n$, $s'_1$, \dots, $s'_n\in S$ and $t_1$, \dots, $t_n$, $t'_1$, \dots, $t'_n\in T$. Hence it suffices to prove that  $\sigma((s,t,s'\tensor t')g)$ belongs to $J$ for all $s$, $s'\in S$, $t$, $t'\in T$ and $g\in G$. Next, we see that $(s,t,s'\tensor t')=(s,0,0)+(0,t,0)+(0,0,s'\tensor t')$. As a consequence, it suffices to prove that $\sigma((s,0,0)g)$, $\sigma((0,t,0)g)$ and $\sigma((0,0,s\tensor t)g)$ belong to $J$ for all $s\in S$ and $t\in T$. Writing $g=(i,-i,0)\in S+^{\CAAlg}T=S\oplus T\oplus (S\tensor T)$ and $S+T=S\oplus T\oplus U$, we calculate what happens in each of these three cases:
	\begin{align*}
		\sigma((s,0,0)g) & =\sigma((s,0,0)(i,-i,0))=\sigma(si,0,-s\tensor i)=(si,0,-s\tensor i) \\&=(s,0,0)(i,-i,0)\in J,
	\end{align*}
	\begin{align*}
		\sigma((0,t,0)g) & =\sigma((0,t,0)(i,-i,0))=\sigma(0,-it,i\tensor t)=(0,-it,i\tensor t) \\&=(i,-i,0)(0,t,0)\in J,
	\end{align*}
	\begin{align*}
		\sigma((0,0,s\tensor t)g) & =\sigma((0,0,s\tensor t)(i,-i,0))=\sigma(0,0,si\tensor t-s\tensor it) \\&=(0,0,si\tensor t-s\tensor it) =(si,0,-s\tensor i)(0,t,0)\\&=(s,0,0)(i,-i,0)(0,t,0)\in J.
	\end{align*}
	Note that in the above calculations, we used commutativity twice: in the second equality of the second and third cases.

	Thanks to Lemma~\ref{Lemma Mono}, inside the vector space $S+T$, the intersection $S\cap J$ is trivial, by the assumption that the composite ${\iota_S\colon S\to S+T\to S+_IT}$ is monic. But then the smaller space $S\cap \sigma(K)$ is trivial as well, so that the composite
	\[
		S\to S+^{\CAAlg}T\to S+^{\CAAlg}_IT
	\]
	is a monomorphism by Lemma~\ref{Lemma Mono}.
\end{proof}

For arbitrary diagrams of monomorphisms of commutative associative algebras, the proof stays essentially the same. This allows us to conclude:

\begin{theorem}\label{Theorem CAAlg WRA}
	The category $\CAAlg$ of commutative associative algebras is weakly action representable.\noproof
\end{theorem}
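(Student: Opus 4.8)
By Proposition~\ref{Proposition Amalgam}, it suffices to prove that for every commutative associative algebra $X$ the diagram of commutative subalgebras of $\M(X)$ admits an amalgam in $\CAAlg$. As noted in the text, this diagram already admits an amalgam in $\AAlg$, namely $\M(X)$ itself together with the inclusions of its commutative subalgebras; so the theorem will follow from the stronger assertion that \emph{every diagram of commutative associative algebras which admits an amalgam in $\AAlg$ admits an amalgam in $\CAAlg$}. The plan is to obtain this by running the argument of Theorem~\ref{Thm Amalgam} with an arbitrary diagram in place of a span.

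First I would reduce to a statement about colimits. Both $\AAlg$ and $\CAAlg$ are cocomplete, and (as recalled above) a diagram $D$ admits an amalgam in such a category precisely when its colimit cocone is monic, i.e.\ every coprojection $D_i\to \operatorname{colim} D$ is a monomorphism. So I may assume that each coprojection $D_i\to C$ is a monomorphism, where $C\coloneqq\operatorname{colim}^{\AAlg}D$, and I must show that each coprojection $D_i\to C'$ is a monomorphism, where $C'\coloneqq\operatorname{colim}^{\CAAlg}D$. Since $\CAAlg$ is reflective in $\AAlg$, the algebra $C'$ is the reflection of $C$, and the $\CAAlg$-coprojection factors as $D_i\to C\xrightarrow{\eta_C}C'$ with $\eta_C$ the reflection unit. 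By Lemma~\ref{Lemma Mono} it is then enough to show that $\Imm(D_i\to C)\cap\Ker(\eta_C)$ is trivial.

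For this I would mimic the computation in the proof of Theorem~\ref{Thm Amalgam}. Let $P\coloneqq\coprod^{\AAlg}D_i$ and $P'\coloneqq\coprod^{\CAAlg}D_i$; as vector spaces $P$ is the direct sum of the mixed tensor products $D_{i_1}\tensor\cdots\tensor D_{i_n}$ indexed by reduced words, while $P'$ is the smaller direct sum of tensor products indexed by finite nonempty subsets of the index set (relative to a fixed total order on it). Consequently the reflection unit $\eta_P\colon P\to P'$ admits a canonical $\Vect$-splitting $\sigma\colon P'\to P$ compatible with the coprojections. Now $C$ and $C'$ are the quotients of $P$ and $P'$ by the ideals $J$ and $K$ generated by the \emph{same} family of gluing relations associated with the arrows of $D$. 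The crux is to verify that $\sigma(K)\subseteq J$: arguing as in the three displayed cases of Theorem~\ref{Thm Amalgam}, one checks that $\sigma$ sends each product $xg$ --- with $x$ a homogeneous element of $P'$ and $g$ a gluing generator --- into $J$, using commutativity to rewrite the mixed tensors that appear, and then passes to a general element of $K$ by additivity. Granting $\sigma(K)\subseteq J$, the assumed triviality of $\Imm(D_i\to C)\cap\Ker(\eta_C)=D_i\cap J$ inside $P$ forces $D_i\cap\sigma(K)=0$, hence $\Imm(D_i\to C')\cap\Ker(\eta_C)=0$ after applying $\sigma$, and a second use of Lemma~\ref{Lemma Mono} shows that $D_i\to C'$ is a monomorphism. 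This proves the general claim, and with it the theorem.

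I expect the only genuine obstacle to be bookkeeping: organising the coproduct $\coprod^{\AAlg}D_i$ over an arbitrary, possibly infinite, index set, describing the homogeneous decomposition of $P'$, and tracking the non-multiplicative section $\sigma$ carefully enough to confirm $\sigma(K)\subseteq J$ in full generality. No new conceptual device beyond the span case is needed --- the vector-space splitting, the use of commutativity to push mixed tensors into the associative ideal, and the two applications of Lemma~\ref{Lemma Mono} all already appear in Theorem~\ref{Thm Amalgam}. It is worth recording explicitly that the diagram of commutative subalgebras of $\M(X)$ to which the claim is applied consists of subalgebra inclusions, so that all of its arrows are monomorphisms, as the notion of amalgam requires.
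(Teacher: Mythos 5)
Your proposal is correct and follows the paper's own route: the paper likewise derives the theorem from Proposition~\ref{Proposition Amalgam} together with the extension of Theorem~\ref{Thm Amalgam} from spans to arbitrary diagrams of monomorphisms, which it asserts ``stays essentially the same'' --- precisely the colimit-as-quotient-of-coproduct bookkeeping, the vector-space splitting $\sigma$ with $\sigma(K)\subseteq J$, and the two applications of Lemma~\ref{Lemma Mono} that you spell out. Your write-up in fact makes explicit the general-diagram details that the paper leaves to the reader.
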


\begin{remark}
	For a given diagram of commutative associative algebras as above, the amalgam $T$ in $\CAAlg$ is also an amalgam in $\AAlg$.
\end{remark}

\begin{remark}\label{reminitial}
	Note that by its construction as a colimit, the weak representation $(T,\tau)$ is automatically an \emph{initial weak representation} (see \cite[Section 5]{WAR}). As explained in \cite[Corollary~5.3, Corollary 5.4]{WAR}, the existence of the initial weak representation also follows from the existence of a weak representation and the fact that $\CAAlg$, as a semi-abelian variety of \emph{universal} algebras, is a total category~\cite{Street}.
\end{remark}

\subsection*{Jacobi--Jordan algebras}

We now want to study the representability of actions of the variety $\JJAlg$ of Jacobi--Jordan algebras. As already mentioned in Section~\ref{SecPrel}, every split extension of $B$ by~$X$ in $\LieAlg$ is represented by a homomorphism ${B \rightarrow \Der(X)}$. For Jacobi--Jordan algebras, the role the derivations have in $\LieAlg$ is played by the so-called \emph{anti-derivations}.

\begin{definition}
	Let $X$ be a Jacobi--Jordan algebra. An \emph{anti-derivation} is a linear map $d \colon X \rightarrow X$ such that
	\[
		d(xy)=-d(x)y-d(y)x, \quad \forall x,y \in X.
	\]
\end{definition}

The (left) multiplications $L_x$ for $x \in X$ are particular anti-derivations, called \emph{inner anti-derivations}. We denote by $\ADer(X)$ the space of anti-derivations of $X$ and by $\Inn(X)$ the subspace of the inner anti-derivations. Anti-derivations play a significant role in the study of cohomology of Jacobi--Jordan algebras: see~\cite{JJAlg2} for further details.

We now want to make explicit what are the derived actions in the category~$\JJAlg$ and how they are related with the anti-derivations. The following is an easy application of \Cref{lemma derived action}.

\begin{proposition}
	Let $X$ and $B$ be two Jacobi--Jordan algebras. Given a pair of bilinear maps
	\[
		l\colon B \times X \rightarrow X, \qquad r \colon X \times B \rightarrow X
	\]
	defined by
	\[
		b \ast x = l(b,x), \qquad x \ast b = r(x,b),
	\]
	we construct $(B \oplus X,\cdot)$ as in \eqref{product on semi direct}. Then $(B \oplus X, \cdot)$ is a Jacobi--Jordan algebra if and only if
	\begin{enumerate}
		\item $b\ast x =x \ast b$;
		\item $b \ast (xx')=-(b \ast x)x'-(b \ast x) \ast x'$;
		\item $(bb') \ast x=-b \ast (b' \ast x) - b' \ast (b \ast x)$;
	\end{enumerate}
	for all $b$, $b' \in B$ and $x$, $x' \in X$.\noproof
\end{proposition}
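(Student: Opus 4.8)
The plan is to apply \Cref{lemma derived action} directly: the pair $(l,r)$ is a derived action of $B$ on $X$ precisely when the algebra $(B\oplus X,\cdot)$ defined by~\eqref{product on semi direct} lies in $\JJAlg$, i.e.\ satisfies commutativity and the Jacobi identity. So the whole proof reduces to expanding these two identities on arbitrary elements $(b,x)$, $(b',x')$, $(b'',x'')$ of $B\oplus X$, using that $X$ and $B$ already satisfy them, and reading off the conditions on $l$ and $r$ that remain.

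First I would treat commutativity. Computing $(b,x)\cdot(b',x') - (b',x')\cdot(b,x)$ gives $(bb'-b'b,\; xx'-x'x + b\ast x' + x\ast b' - b'\ast x - x'\ast b)$. Since $B$ and $X$ are commutative, the first component vanishes and the $xx'-x'x$ part of the second vanishes, leaving the requirement $b\ast x' + x\ast b' = b'\ast x + x'\ast b$ for all $b,b'\in B$, $x,x'\in X$. Setting $b'=0$ (or $x'=0$) forces $b\ast x = x\ast b$, which is condition~(1); conversely~(1) clearly implies the general identity. So commutativity of $B\oplus X$ is equivalent to~(1), and from now on I may write $b\ast x$ unambiguously.

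Next, the Jacobi identity $\sum_{\mathrm{cyc}} (b,x)\big((b',x')(b'',x'')\big)=0$. The $B$-component is $\sum_{\mathrm{cyc}} b(b'b'')$, which is zero since $B\in\JJAlg$. The $X$-component, after expanding~\eqref{product on semi direct} twice, is a sum of terms of several homogeneity types in how many of the three slots come from $X$ versus $B$: the ``all-$X$'' terms $\sum_{\mathrm{cyc}} x(x'x'')$ vanish because $X\in\JJAlg$; the ``two-$B$, one-$X$'' terms assemble into the cyclic sum corresponding to condition~(3) tested on $b,b',x$; and the ``one-$B$, two-$X$'' terms assemble into condition~(2). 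The mixed terms organize themselves so that, using~(1) to identify $b\ast x$ with $x\ast b$ freely, the vanishing of the $X$-component is equivalent to the conjunction of~(2) and~(3) holding for all arguments. I would present this as: expand, group by type, cancel the two intrinsic Jacobi sums, and match the remaining two groups with~(2) and~(3) — noting that each of~(2),~(3) is already a cyclic-type identity so no extra symmetrization is lost.

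The main obstacle — really the only delicate point — is bookkeeping in the $X$-component of the Jacobi expansion: there are many terms and one must be careful that the cyclic sum genuinely collapses to exactly~(2) and~(3) without leftover constraints, and that condition~(1) is used correctly to convert all occurrences of $x\ast b$ into $b\ast x$. I would handle this by fixing one representative cyclic orbit, expanding it fully, and then invoking the cyclic sum; since~(2) and~(3) are themselves stated with a single choice of which variable plays the ``outer'' role, one should double-check that, e.g., condition~(2) with $(b,x,x')$ together with its instances under permuting the two $X$-slots is what the expansion produces — but commutativity of $X$ (hence $xx'=x'x$) makes this automatic. Everything else is routine linear algebra, so I would not write it out in full detail.
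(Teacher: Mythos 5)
Your proposal is correct and is exactly the argument the paper intends: the proposition is stated with no written proof, introduced only as ``an easy application'' of the lemma identifying derived actions with semi-direct products lying in the variety, and your expansion of commutativity (yielding condition~(1)) and of the Jacobi identity on $B\oplus X$, grouped by how many of the three arguments come from $B$ (the all-$X$ and all-$B$ parts vanishing since $X,B\in\JJAlg$, the remaining two groups giving conditions~(2) and~(3)), is precisely the computation left to the reader. One small remark: your expansion of the one-$B$ group yields $b\ast(xx')=-(b\ast x)x'-(b\ast x')x$, i.e.\ the anti-derivation identity consistent with the paper's subsequent identification of derived actions with maps into $\ADer(X)$, which indicates that the printed form of condition~(2) contains a typographical slip in its last term.
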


In an equivalent way, a derived action of $B$ on $X$ in the variety $\JJAlg$ is given by a linear map
\[
	B \rightarrow \ADer(X)\colon b \mapsto b\ast-
\]
which satisfies
\begin{equation}\label{JJeq}
	(bb') \ast x = - b\ast (b' \ast x) - b' \ast (b \ast x), \quad \forall b,b' \in B, \; \forall x \in X.
\end{equation}

\begin{remark}\label{remJJ}
	The vector space $\ADer(X)$ endowed with the \emph{anti-commutator}
	\[
		\langle -,- \rangle \colon \ADer(X) \times \ADer(X) \rightarrow \End(X) , \quad \langle f,f' \rangle = -f \circ f'-f' \circ f
	\]
	is not in general an algebra, since the anti-commutator of two anti-derivations is not in general an anti-derivation: in \cite[Remark 2.2]{JJAlg3}, the authors proved that $\langle f,f' \rangle \in \ADer(X)$ if and only if
	\[
		\langle f,f' \rangle (xy)= - f(x)f'(y) - f'(x)f(y), \quad \forall x,y \in X.
	\]
	Moreover, it can happen that the anti-commutator $\langle -,- \rangle$ is a well defined bilinear operation on the space $\ADer(X)$ but it does not define a Jacobi--Jordan algebra structure: for instance, if $X=\F$ is the abelian one-dimensional algebra, then $\ADer(X)=\End(X) \cong \F$ (every linear endomorphism of~$X$ is of the form $\varphi_\alpha \colon x \mapsto \alpha x$, for some $\alpha \in \F$) and the Jacobi identity is not satisfied. Nevertheless, there are some subspaces of $\ADer(X)$ that are Jacobi--Jordan algebras. For instance, the subspace $\Inn(X)$ of all inner anti-derivations of~$X$. Indeed, the linear map
	\[
		X \rightarrow \ADer(X)\colon x \mapsto L_x,
	\]
	restricts to a Jacobi--Jordan algebra homomorphism $X \rightarrow \Inn(X)$. This is true in general for the image of any linear map $B \rightarrow \ADer(X)$ satisfying equation~\eqref{JJeq}.
\end{remark}

Thus we need to use an algebraic structure which includes the space of anti-derivations endowed with the anti-commutator and which allows us to describe categorically the representability of actions of the variety $\JJAlg$. One possible solution is given by \emph{partial algebras}.

Indeed, the vector space $\ADer(X)$ endowed with the anti-commutator $\langle-,-\rangle$ is a commutative partial algebra. In this case $\Omega$ is the preimage
\[
	\langle-,-\rangle^{-1}(\ADer(X))
\]
of the inclusion $\ADer(X)\hookrightarrow \End(X)$.
\begin{theorem}\label{thmJJAlg}
	Let $X$ be a Jacobi--Jordan algebra and let $U \colon \JJAlg^{\op} \rightarrow \PAlg$ denote the forgetful functor.
	\begin{enumerate}
		\item There exists a natural isomorphism of functors from $\JJAlg^{\op} \rightarrow \Set$
		      \[
			      \rho \colon \SplExt(-,X)\cong\Hom_{\PAlg}(U(-),\ADer(X)),
		      \]
		      where $\SplExt(-,X)=\SplExt_\JJAlg(-,X)$;
		\item if $\ADer(X)$ is a Jacobi--Jordan algebra, then the functor $\SplExt(-,X)$ is representable and $\ADer(X)$ is the actor of $X$;
	\end{enumerate}
\end{theorem}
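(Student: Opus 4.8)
The plan is to mimic, in the setting of Jacobi--Jordan algebras, the proof of \Cref{RemCAAlg} for commutative associative algebras, replacing the associative algebra $\M(X)$ of multipliers by the partial algebra $\ADer(X)$ of anti-derivations with the anti-commutator partial operation. Recall from \Cref{lemma derived action} that a derived action of $B$ on $X$ in $\JJAlg$ is the same thing as a pair of bilinear maps $(l,r)$ making $B\oplus X$ a Jacobi--Jordan algebra, which by the preceding proposition is equivalent to a linear map $\varphi\colon B\to \ADer(X)$, $b\mapsto b\ast-$, satisfying $\varphi(bb')=\langle\varphi(b),\varphi(b')\rangle$ in the sense of equation~\eqref{JJeq} (the condition $b\ast x=x\ast b$ forces $l$ and $r$ to agree, so a single map into $\ADer(X)$ suffices). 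The content of the equivalence is precisely that \eqref{JJeq} says $\varphi$ is a homomorphism of partial algebras $U(B)\to \ADer(X)$: indeed $\varphi(bb')$ is required to equal the anti-commutator $\langle\varphi(b),\varphi(b')\rangle$, and for this to make sense one needs $(\varphi(b),\varphi(b'))\in\Omega$, i.e.\ $\langle\varphi(b),\varphi(b')\rangle\in\ADer(X)$, which is automatic since it equals $\varphi(bb')\in\ADer(X)$. This is exactly the defining condition for a morphism in $\PAlg$.

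First I would make the assignment $\xi\mapsto\rho_B(\xi)$ precise: given a split extension of $B$ by $X$, \Cref{rem_b*} identifies it (up to isomorphism of split extensions) with a semi-direct product $B\ltimes X$, hence with a derived action $(l,r)$, hence with the linear map $b\mapsto b\ast-$ into $\ADer(X)$; by the proposition this map satisfies \eqref{JJeq}, so it lies in $\Hom_{\PAlg}(U(B),\ADer(X))$. Conversely, given such a partial-algebra morphism $\varphi\colon U(B)\to\ADer(X)$, setting $l(b,x)=r(x,b)=\varphi(b)(x)$ produces a pair of bilinear maps satisfying conditions (1)--(3) of the proposition — condition (2) being the anti-derivation property of $\varphi(b)$, condition (1) the symmetry built into the definition, and condition (3) being \eqref{JJeq} — so $B\oplus X\in\JJAlg$ and we obtain a split extension. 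These two assignments are mutually inverse because both sides are, after \Cref{rem_b*}, literally the same data. Naturality in $B$ is the routine check that pulling back a split extension along $f\colon B'\to B$ corresponds to precomposing $\varphi$ with $U(f)$, which is immediate from $\varphi'(b')=\varphi(f(b'))$. This establishes part~(1).

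For part~(2), suppose $\ADer(X)$ happens to be a (total) Jacobi--Jordan algebra, i.e.\ $\Omega=\ADer(X)\times\ADer(X)$ and the anti-commutator $\langle-,-\rangle$ satisfies commutativity and the Jacobi identity. Then $U(\ADer(X))=\ADer(X)$ with its full operation, and by the universal property of $U$ (or simply because every bilinear map is defined everywhere) we get $\Hom_{\PAlg}(U(B),\ADer(X))\cong\Hom_{\JJAlg}(B,\ADer(X))$ naturally in $B$; composing with the isomorphism $\rho$ of part~(1) yields a natural isomorphism $\SplExt(-,X)\cong\Hom_{\JJAlg}(-,\ADer(X))$, which is exactly representability with actor $\ADer(X)$. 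I expect the only subtlety here is to confirm that no hidden condition is needed: one must check that every Jacobi--Jordan algebra homomorphism $B\to\ADer(X)$ automatically satisfies \eqref{JJeq}, but \eqref{JJeq} \emph{is} the statement that the map preserves the multiplication $\langle-,-\rangle$, so this is tautological once the operation is total.

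The main obstacle, and the reason partial algebras are genuinely needed, is entirely in part~(1): one must be careful that $\rho_B$ is well defined and bijective \emph{as a map of sets}, i.e.\ that the target really is $\Hom_{\PAlg}$ and not something smaller or larger. The crucial point — already flagged in \Cref{remJJ} — is that the image of a map satisfying \eqref{JJeq} consists of anti-derivations whose pairwise anti-commutators are \emph{again} anti-derivations (because they equal images of products in $B$), so such a map does factor through the subspace $\Omega$; conversely an arbitrary linear map $B\to\ADer(X)$ need not do so, and it is precisely the morphisms of $\PAlg$ that capture the right condition. Getting this bookkeeping exactly right, together with verifying naturality of $\rho$ against the change-of-base maps (pullbacks of split extensions), is where the real work lies; everything else is a transcription of the commutative associative case.
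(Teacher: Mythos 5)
Your proposal is correct and follows essentially the same route as the paper: define $\rho_B$ by sending a split extension to the map $b\mapsto b\ast-$, use the characterisation of derived actions (conditions (1)--(3) and equation~\eqref{JJeq}) to see that this is a bijection onto $\Hom_{\PAlg}(U(B),\ADer(X))$, check naturality, and deduce part~(2) from the fact that when $\ADer(X)$ is a total Jacobi--Jordan algebra the partial-algebra homomorphisms out of a total algebra are exactly the $\JJAlg$-homomorphisms. Your added remarks on why \eqref{JJeq} forces the anti-commutators of the image to land back in $\ADer(X)$ make explicit a point the paper leaves implicit, but the argument is the same.
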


\begin{proof}
	(1) For a Jacobi--Jordan algebra $B$, we define the component
	\[
		\rho_{B}\colon \SplExt(B,X) \rightarrow \Hom_{\PAlg}(U(B),\ADer(X))
	\]
	as the functor which sends any split extension
	\begin{equation*}
		\begin{tikzcd}
			0\ar[r]
			&X \arrow [r, "i"]
			&A \arrow[r, shift left, "\pi"] &
			B \ar[r]\ar[l, shift left, "s"]
			&0
		\end{tikzcd}
	\end{equation*}
	to the morphism $B \rightarrow \ADer(X)\colon b \mapsto b \ast -$. The transformation $\rho$ is natural. Indeed, for any Jacobi--Jordan algebra homomorphism $f\colon B' \rightarrow B$, it is easy to check that the diagram in $\Set$
	\[
		\begin{tikzcd}
			\SplExt(B,X) \arrow[r, "\rho_{B}"] \arrow[d, "{\SplExt(f,X)}"]
			& \Hom(U(B),\ADer(X)) \arrow[d,"{\Hom(U(f),\ADer(X))}"]\\
			\SplExt(B',X) \arrow[r, "\rho_{B'}"] & \Hom(U(B'),\ADer(X))
		\end{tikzcd}
	\]
	where $\Hom(U(-),-)=\Hom_{\PAlg}(U(-),-)$, is commutative. Moreover, for any Jacobi--Jordan algebra $B$, the morphism $\rho_{B}$ is an injection, as each element of $\SplExt(B,X)$ is uniquely determined by the corresponding action of $B$ on $X$. Thus $\rho$ is a monomorphism of functors. Finally $\rho$ is a natural isomorphism since, given any Jacobi--Jordan algebra $B$ and any homomorphism of partial algebras $\varphi \colon B \rightarrow \ADer(X)$, the bilinear maps $l_\varphi \colon B \times X \rightarrow X\colon (b,x)\mapsto\varphi(b)(x)$, $r_\varphi=l_\varphi$ define a (unique) derived action of $B$ on $X$ such that $\rho_B(l_\varphi,r_\varphi)=\varphi$.

	(2) If $\ADer(X)$ is a Jacobi--Jordan algebra, then by (1) we have a natural isomorphism
	\[
		\SplExt(-,X)\cong\Hom_{\JJAlg}(-,\ADer(X)),
	\]
	hence $\ADer(X)$ is the actor of $X$.
\end{proof}

\subsection*{Two-step nilpotent commutative algebras}

We now analyse the case where~$\cV$ is a subvariety of both $\CAAlg$ and $\JJAlg$, i.e.\ $\cV$ is the variety $\Nil_2(\CAlg)$ of two-step nilpotent commutative algebras. We recall this means that $xyz=0$ is an identity of $\cV$. An example of such an algebra is the \emph{Kronecker algebra} $\mathfrak{k}_1$ (see~\cite{LaRosaMancini1}), which is the three-dimensional algebra with basis $\lbrace e_1,e_2,e_3 \rbrace$ and multiplication determined by $e_1e_2=e_2e_1=e_3$.

We shall show that $\Nil_2(\CAlg)$ is an example of a weakly action representable, operadic, quadratic variety of commutative algebras.

\begin{proposition}\label{propNil2}
	Let $X$ and $B$ be two algebras in $\Nil_2(\CAlg)$. Given a pair of bilinear maps
	\[
		l\colon B \times X \rightarrow X, \qquad r \colon X \times B \rightarrow X,
	\]
	we construct $(B \oplus X,\cdot)$ as in \eqref{product on semi direct}. Then $(B \oplus X, \cdot)$ is in $\Nil_2(\CAlg)$ if and only if
	\begin{enumerate}
		\item $b\ast x =x \ast b$;
		\item $b \ast (xx')=(b \ast x)x'=0$;
		\item $(bb') \ast x=b \ast (b' \ast x) =0$;
	\end{enumerate}
	for any $b$, $b' \in B$ and $x$, $x' \in X$.\noproof
\end{proposition}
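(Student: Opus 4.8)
The plan is to apply \Cref{lemma derived action}, which reduces the problem to checking when $(B\oplus X,\cdot)$ satisfies the identities defining $\Nil_2(\CAlg)$, namely commutativity $uv=vu$ and two-step nilpotency $uvw=0$ (equivalently $u(vw)=0=(uv)w$). Since $B\oplus X$ as a vector space decomposes with product given by~\eqref{product on semi direct}, I would simply expand both identities on arbitrary elements $(b,x)$, $(b',x')$, $(b'',x'')$ and collect the resulting conditions, remembering that $B$ and $X$ already lie in $\Nil_2(\CAlg)$ so that $bb'b''=0$ in $B$, $xx'x''=0$ in $X$, $bb'=b'b$ and $xx'=x'x$.

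First I would handle commutativity: computing $(b,x)\cdot(b',x')$ and $(b',x')\cdot(b,x)$ and using commutativity in $B$ and $X$, equality reduces exactly to $b\ast x' + x\ast b' = b'\ast x + x'\ast b$ for all arguments; specialising $x'=0$ (resp.\ $x=0$) forces $b\ast x = x\ast b$, which is condition (1), and conversely (1) clearly gives back the general equality. Next, for the nilpotency identity, I would compute a triple product such as $\bigl((b,x)\cdot(b',x')\bigr)\cdot(b'',x'')$. The first components multiply to $(bb')b''=0$ in $B$. The second component is a sum of terms, each of which is a product of elements of $X$ together with at least one ``$\ast$''; using~\eqref{product on semi direct} and the fact that $xx'x''=0$ in $X$, the surviving terms are of the shape $(b\ast x')x''$, $(x\ast b')x''$, $(xx')\ast b''$, $(bb')\ast x''$, $b\ast(b'\ast x'')$, $b\ast(x'\ast b'')$, $x\ast(b'\ast b'')$, and so on. Setting two of the three arguments to lie in $B$ and one in $X$ (in each of the three slots) isolates the conditions $(bb')\ast x = 0$ and $b\ast(b'\ast x)=0$; setting two arguments in $X$ and one in $B$ isolates $(xx')\ast b=0$ and $b\ast(xx')=0$ and $(b\ast x)x'=0$; the purely $B$-valued and purely $X$-valued triple products vanish automatically. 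Using condition (1) to rewrite every $x\ast b$ as $b\ast x$, these collapse to exactly the equalities in conditions (2) and (3) of the statement. Conversely, assuming (1)--(3), every term in every expansion of a triple product vanishes, so $\Nil_2(\CAlg)$ holds; one also checks the mixed commutativity relation on the second component of a triple product is subsumed.

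The argument is entirely a bookkeeping computation, so there is no conceptual obstacle; the only thing requiring care is the case analysis in the triple-product expansion, making sure that after imposing commutativity condition (1) one does not lose or double-count any of the mixed $\ast$-terms, and that the ``$=0$'' in (2) and (3) is genuinely forced (not merely that the two displayed expressions agree) — this comes from the triple products with exactly one factor in $B$ forcing, e.g., $(b\ast x)x''=0$ directly, whence also $b\ast(xx')=0$ by (2)'s first equality read in reverse. I would therefore organise the proof as: (i) invoke \Cref{lemma derived action}; (ii) derive (1) from commutativity of $B\oplus X$ and conversely; (iii) expand the generic triple product, discard terms killed by nilpotency of $B$ and of $X$, and read off (2) and (3); (iv) note the converse is immediate since under (1)--(3) all relevant terms vanish.
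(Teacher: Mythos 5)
Your proposal is correct and takes essentially the same approach the paper intends: the proposition is stated there without proof precisely because it is the routine application of \Cref{lemma derived action} that you carry out, namely expanding commutativity and the triple products on $B\oplus X$, discarding the terms killed by two-step nilpotency of $B$ and $X$, and specialising components to zero to isolate conditions (1)--(3). (One cosmetic slip: the term you write as $x\ast(b'\ast b'')$ should be $x\ast(b'b'')$, since $\ast$ is only defined between elements of $B$ and of $X$; this does not affect the argument.)
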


The second equation of~\Cref{propNil2} states that, for every $b \in B$, the linear map $b \ast -$ belongs to the vector space
\[
	[X]_2=\lbrace f \in \End(X) \mid f(xy)=f(x)y=0, \; \forall x \in X \rbrace.
\]
Moreover, seeing $[X]_2$ as an abelian algebra (i.e.\ $\langle f, g \rangle =0_{\End(X)}$, for every $f,g \in [X]_2$), from the third equation we deduce that the linear map
\[
	B \rightarrow [X]_2\colon b \mapsto b \ast -
\]
is an algebra homomorphism.

On the other hand, given a morphism of algebras
\[
	\varphi\colon B \rightarrow [X]_2, \quad \varphi(b) = b \ast -
\]
satisfying
\[
	b \ast (b' \ast x) =0, \quad \forall b,b' \in B, \; \forall x \in X,
\]
we can consider the split extension
\begin{equation*}
	\begin{tikzcd}
		0\ar[r]
		&X \arrow [r, "i"]
		&(B \oplus X, \ast_\varphi) \arrow[r, shift left, "\pi"] &
		B \ar[r]\ar[l, shift left, "s"]
		&0
	\end{tikzcd}
\end{equation*}
where the two-step nilpotent commutative algebra structure of $B \oplus X$ is given by
\[
	(b,x)\ast_\varphi(b',x')=(bb',xx'+b \ast x' + b' \ast x), \quad \forall (b,x),(b',x') \in B \oplus X.
\]

We can now claim the following result.

\begin{theorem}\label{Nil_2com}
	\begin{enumerate}
		\item Let $B$ and $X$ be two-step nilpotent commutative algebras. The isomorphism classes of split extensions of $B$ by $X$ are in bijection with the algebra homomorphisms
		      \[
			      B \rightarrow [X]_2\colon b \mapsto b \ast -
		      \]
		      satisfying
		      \begin{equation}\label{Nil_2}
			      b \ast (b' \ast x)=0, \quad \forall b,b' \in B,\; \forall x \in X.
		      \end{equation}
		\item The variety $\Nil_2(\CAlg)$ is weakly action representable. A weak representation of the functor $\SplExt(-,X)=\SplExt_{\Nil_2(\CAlg)}(-,X)$ is given by
		      \[
			      \tau \colon \SplExt(-,X) \rightarrowtail \Hom_{\Nil_2(\CAlg)}(-,[X]_2),
		      \]
		      where $\tau_B$ is the injection which sends any split extension of $B$ by~$X$ to the corresponding homomorphism $B \rightarrow [X]_2$, defined by $b \mapsto b \ast -$ as above.
		\item A homomorphism $B \rightarrow [X]_2$ is an acting morphism if and only if it satisfies~\Cref{Nil_2}.
	\end{enumerate}
\end{theorem}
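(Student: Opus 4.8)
The plan is to deduce all three parts from Proposition~\ref{propNil2} together with the general correspondence between split extensions, derived actions and semi-direct products recorded in Lemma~\ref{lemma derived action} and Remark~\ref{rem_b*}, and then to verify by a routine check that the resulting assignment defines a monomorphism of functors. No deep new idea is needed; essentially all the computational work has already been done in Proposition~\ref{propNil2}.

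For part~(1), recall from Lemma~\ref{lemma derived action} and Remark~\ref{rem_b*} that every split extension of $B$ by $X$ is isomorphic, fixing $X$ and $B$, to the semi-direct product $B\ltimes X$ associated with its derived action, and that by protomodularity such an isomorphism is unique; moreover an isomorphism of split extensions fixing $X$ and $B$ preserves the derived action (apply the isomorphism to $b\ast x=s(b)i(x)$ and use that $i$ is a monomorphism). Hence isomorphism classes of split extensions of $B$ by $X$ in $\Nil_2(\CAlg)$ correspond bijectively to derived actions of $B$ on $X$ in $\Nil_2(\CAlg)$, and it remains to match these with the homomorphisms in the statement. By Proposition~\ref{propNil2} such a derived action is a pair $(l,r)$ with $l=r$, with $b\ast(xx')=(b\ast x)x'=0$, and with $(bb')\ast x=b\ast(b'\ast x)=0$. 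The first condition turns the action into a single linear map $\varphi\colon B\to\End(X)$, $\varphi(b)=b\ast-$; the second condition is precisely the statement that $\varphi(b)\in[X]_2$ for all $b$; and the third condition splits into $\varphi(bb')=0$ (which, since $[X]_2$ carries the zero multiplication, is exactly the assertion that $\varphi\colon B\to[X]_2$ is an algebra homomorphism) together with $\varphi(b)\circ\varphi(b')=0$ in $\End(X)$, i.e.~\eqref{Nil_2}. Conversely, given any homomorphism $\varphi\colon B\to[X]_2$ satisfying~\eqref{Nil_2}, the maps $l_\varphi(b,x)=r_\varphi(x,b)=\varphi(b)(x)$ satisfy all three conditions of Proposition~\ref{propNil2}, hence define a derived action. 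I would also note here that this bijection is natural in $B$: pulling a split extension back along $f\colon B'\to B$ replaces its action $\varphi$ by $\varphi\circ f$, by the same short pullback computation used in the proof of Theorem~\ref{thmJJAlg}.

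For part~(2), first observe that $[X]_2$ is a genuine object of $\Nil_2(\CAlg)$: it is the vector subspace of $\End(X)$ cut out by the linear conditions $f(xy)=0$ and $f(x)y=0$, and with the zero multiplication it is trivially commutative and two-step nilpotent. Define $\tau_B$ to send the isomorphism class of a split extension to the homomorphism $\varphi$ produced in~(1); this is well defined on isomorphism classes by the remark above that such isomorphisms preserve the derived action. Part~(1) shows each $\tau_B$ is injective (the extension is recovered from its derived action, and the derived action from $\varphi$), and the naturality noted in part~(1) shows $\tau$ is a natural transformation. Hence $\tau\colon\SplExt(-,X)\rightarrowtail\Hom_{\Nil_2(\CAlg)}(-,[X]_2)$ is a monomorphism of functors, $[X]_2$ is a weak actor of $X$, and $\Nil_2(\CAlg)$ is weakly action representable.

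Part~(3) is then immediate: by definition an acting morphism is one lying in the image of some $\tau_B$, and by part~(1) that image consists exactly of the homomorphisms $B\to[X]_2$ satisfying~\eqref{Nil_2}. The only point that requires a little care is the bookkeeping in part~(1) that separates the condition "$\varphi$ is a homomorphism into the abelian algebra $[X]_2$" (equivalently $(bb')\ast x=0$) from the genuinely extra constraint~\eqref{Nil_2} (equivalently $\varphi(b)\circ\varphi(b')=0$); the latter is not automatic for $f,g\in[X]_2$ — for instance it fails for $X=\F^2$ abelian, where $[X]_2=\End(X)$ — which is exactly why $\tau_B$ need not be surjective and why $[X]_2$ is merely a weak actor.
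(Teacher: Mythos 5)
Your proposal is correct and follows essentially the same route as the paper: the paper derives part~(1) from the analysis surrounding Proposition~\ref{propNil2} (condition~(2) placing $b\ast-$ in $[X]_2$, condition~(3) splitting into the homomorphism property into the abelian algebra $[X]_2$ plus the extra constraint~\eqref{Nil_2}), and then checks naturality and injectivity of $\tau$ exactly as you do. Your write-up merely makes explicit some bookkeeping (the reduction of isomorphism classes to derived actions, and the fact that $[X]_2$ with zero multiplication lies in $\Nil_2(\CAlg)$) that the paper leaves implicit.
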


\begin{proof}
	(1) It follows from the analysis above.

	(2) We observe that $\tau$ is a natural transformation. Indeed, for every morphism $f\colon B' \rightarrow B$ in $\Nil_2(\CAlg)$, we can check that the diagram in $\Set$
	\[
		\begin{tikzcd}
			\SplExt(B,X) \arrow[r, "\tau_{B}"] \arrow[d, "{\SplExt(f,X)}"]
			& \Hom(B,[X]_2) \arrow[d,"{\Hom(f,[X]_2)}"]\\
			\SplExt(B',X) \arrow[r, "\tau_{B'}"] & \Hom(B',[X]_2)
		\end{tikzcd}
	\]
	is commutative.	Moreover $\tau_{B}$ is an injection since every isomorphism class of split extensions of $B$ by $X$ is uniquely determined by the corresponding derived action.	Thus $\tau$ is a monomorphism of functors and $\Nil_2(\CAlg)$ is a weakly action representable category.

	(3) Finally, $\varphi \colon B \rightarrow [X]_2$ is an acting morphism if and only if it defines a split extension of $B$ by $X$ in $\Nil_2(\CAlg)$, i.e.\ it satisfies equation~\eqref{Nil_2}.
\end{proof}

Let us observe that not every morphism $B \rightarrow [X]_2$ defines a split extension of~$B$ by $X$. For instance, if $B=\F\{b,b'\}$ and $X=\F\{x\} \cong \F$ are abelian algebras, then~$[X]_2=\End(X)$ and the homomorphism $\varphi \colon B \rightarrow [X]_2$, defined by
\[
	\varphi(b)=\varphi(b')=1_{X}
\]
is not an acting morphism. Indeed,
\[
	\varphi(b)(\varphi(b')(x))=1_X(1_X(x))=x \neq 0.
\]

\subsection*{Anti-commutative anti-associative algebras}

For the variety $\ACAntiAAlg$ of anti-commutative anti-associative algebras, a similar description of split extensions and derived actions can be made as for the variety $\JJAlg$. The role of the anti-derivations is played here by the endomorphisms in the associative partial algebra
\[
	[X] \coloneqq \lbrace f \in \End(X) \; \vert \; f(xy)=-f(x)y, \; \forall x \in X \rbrace,
\]
whose bilinear partial operation is given by
\[
	\langle f,g \rangle = - f \circ g.
\]
It is easy to see that $\langle -,- \rangle$ does not define, in general, a total algebra structure on~$[X]$, nor need it be anti-commutative or anti-associative. An example is given by the abelian two-dimensional algebra $X=\F^2$, where $[X]=\End(X)$.

We may check that a derived action of $B$ by $X$ in the variety $\ACAntiAAlg$ is the same thing as a partial algebra homomorphism
\[
	B \rightarrow [X]\colon b \mapsto b\ast-
\]
which satisfies
\begin{equation*}\label{ACAAA}
	(bb') \ast - = -b \ast (b' \ast -), \quad \forall b,b' \in B.
\end{equation*}
Moreover, we obtain the following result whose proof is similar to the one of \Cref{thmJJAlg}.
\begin{theorem}\label{thmAnti}
	Let $X$ be a an object of $\ACAntiAAlg$ and let $U \colon \ACAntiAAlg \rightarrow \PAlg$ denote the forgetful functor.
	\begin{enumerate}
		\item There exists a natural isomorphism \[\SplExt(-,X)\cong\Hom_{\PAlg}(U(-),[X]),\]
		      where $\SplExt(-,X)=\SplExt_\ACAntiAAlg(-,X)$;
		\item if $[X]$ is an anti-commutative anti-associative algebra, then the functor $\SplExt(-,X)$ is representable and $[X]$ is the actor of $X$; \noproof
	\end{enumerate}
\end{theorem}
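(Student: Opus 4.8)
The plan is to mimic the structure of the proof of \Cref{thmJJAlg} almost verbatim, since \Cref{thmAnti} is the anti-commutative anti-associative analogue, with the anti-derivations replaced by the partial algebra $[X]$ and the anti-commutator replaced by the partial operation $\langle f,g\rangle=-f\circ g$. First I would establish part (1). For an object $B$ of $\ACAntiAAlg$, define the component
\[
	\rho_B\colon\SplExt(B,X)\rightarrow\Hom_{\PAlg}(U(B),[X])
\]
by sending a split extension, i.e.\ (via \Cref{lemma derived action} and \Cref{Rem:SplitV}) a derived action $(l,r)$ of $B$ on $X$, to the map $B\rightarrow[X]\colon b\mapsto b\ast-$. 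One first checks this is well defined: the displayed analysis just before the theorem shows that a derived action is precisely such a map landing in $[X]$ and satisfying $(bb')\ast-=-b\ast(b'\ast-)$; since $\langle b\ast-,b'\ast-\rangle=-(b\ast-)\circ(b'\ast-)$, the last identity says exactly that $b\mapsto b\ast-$ is a partial algebra homomorphism into $[X]$ (anti-commutativity of $X$ forces $r$ to be determined by $l$, so no extra data is lost). Then naturality in $B$ is a routine diagram check, as in \Cref{thmJJAlg}; injectivity of $\rho_B$ follows since each split extension is determined up to isomorphism by its derived action (\Cref{rem_b*}); and surjectivity follows because any partial algebra homomorphism $\varphi\colon B\rightarrow[X]$ yields bilinear maps $l_\varphi(b,x)=\varphi(b)(x)$ and $r_\varphi(x,b)=-\varphi(b)(x)$ which, by the equivalence recalled above, constitute a derived action mapped by $\rho_B$ back to $\varphi$. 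Hence $\rho$ is a natural isomorphism.

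For part (2), suppose $[X]$ is an anti-commutative anti-associative algebra. Then $[X]$ is a total algebra in $\ACAntiAAlg$, so $\Hom_{\PAlg}(U(-),[X])$ restricted to $\ACAntiAAlg$ coincides with $\Hom_{\ACAntiAAlg}(-,[X])$ (a partial algebra homomorphism into a total algebra of the variety whose source lies in the variety is just an algebra homomorphism). Combining with (1) gives a natural isomorphism $\SplExt(-,X)\cong\Hom_{\ACAntiAAlg}(-,[X])$, so $\SplExt(-,X)$ is representable and $[X]$ is the actor of $X$.

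The only genuine point of care — the potential obstacle — is verifying that the partial operation $\langle f,g\rangle=-f\circ g$ together with its domain $\Omega=\langle-,-\rangle^{-1}([X])$ really makes $[X]$ a partial algebra, and that the correspondence between derived actions and partial homomorphisms is faithful in both directions. One must check that $-f\circ g$ indeed lies in $\End(X)$ always but that its membership in $[X]$ is a genuine restriction (as the abelian example $X=\F^2$ with $[X]=\End(X)$ shows), and that the defining identity $(bb')\ast-=-b\ast(b'\ast-)$ translated through $\rho_B$ is precisely the homomorphism condition $\varphi(bb')=\langle\varphi(b),\varphi(b')\rangle$, in particular that the pair $(\varphi(b),\varphi(b'))$ automatically lies in $\Omega$ whenever $\varphi$ arises from a derived action (since then $b\ast(b'\ast-)$ is itself of the form $(bb')\ast-\in[X]$). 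This last observation — that the composite of two ``multiplier-like'' endomorphisms coming from an action is forced back into $[X]$ — is what makes the partial structure the right ambient object, and it is the step most worth stating explicitly rather than leaving to the reader.
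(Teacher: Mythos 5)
Your proposal is correct and follows essentially the same route as the paper, which gives no separate argument for this theorem but explicitly refers the reader to the proof of \Cref{thmJJAlg}; your adaptation (replacing $\ADer(X)$ by $[X]$, the anti-commutator by $\langle f,g\rangle=-f\circ g$, and taking $r_\varphi=-l_\varphi$ to account for anti-commutativity) is exactly what is intended. Your explicit remark that $(\varphi(b),\varphi(b'))$ automatically lands in $\Omega$ when $\varphi$ comes from a derived action, because $-b\ast(b'\ast-)=(bb')\ast-$ already lies in $[X]$, is a worthwhile clarification that the paper leaves implicit.
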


\subsection*{Two-step nilpotent anti-commutative algebras}

We conclude this section by studying the representability of actions of the variety $\Nil_2(\ACAlg)$. An important example of a two-step nilpotent anti-commutative algebra is the $(2n+1)$-dimensional \emph{Heisenberg algebra} $\mathfrak{h}_{2n+1}$, that is the algebra with basis
\[
	\lbrace e_1,\ldots,e_n,f_1,\ldots,f_n,h\rbrace
\]
and non-trivial products $e_if_j=-f_je_i=\delta_{ij}h$, for all $i$, $j=1,\ldots,n$, where $\delta_{ij}$ is the \emph{Kronecker delta}.

An analysis, similar to the case of two-step nilpotent commutative algebras can be made, so we simply state the following theorem:

\begin{theorem}
	\begin{enumerate}
		\item Let $B$ and $X$ be two-step nilpotent anti-commutative algebras. The isomorphism classes of split extensions of $B$ by $X$ are in bijection with the algebra homomorphisms
		      \[
			      B \rightarrow [X]_2\colon	b \mapsto b \ast -
		      \]
		      where $[X]_2$ is defined as in the commutative case, which satisfy the condition
		      \begin{equation}\label{Nil_2Lie}
			      b \ast (b' \ast x)=0, \quad \forall b,b' \in B,\; \forall x \in X.
		      \end{equation}
		\item The variety $\Nil_2(\ACAlg)$ is weakly action representable. A weak representation of $\SplExt(-,X)=\SplExt_{\Nil_2(\ACAlg)}$ is given by
		      \[
			      \tau \colon \SplExt(-,X) \rightarrowtail \Hom_{\Nil_2(\ACAlg)}(-,[X]_2),
		      \]
		      where $\tau_B$ is the injection which associates with any split extension of $B$ by~$X$, the corresponding homomorphism $B \rightarrow [X]_2\colon b \mapsto b \ast -$ as in (1).
		\item A homomorphism $B \rightarrow [X]_2$ is an acting morphism if and only if it satisfies~\Cref{Nil_2Lie}.\noproof
	\end{enumerate}
\end{theorem}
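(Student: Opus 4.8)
The plan is to mirror, \emph{mutatis mutandis}, the treatment of $\Nil_2(\CAlg)$ carried out in \Cref{propNil2} and \Cref{Nil_2com}, keeping track of the sign changes forced by anti-commutativity.

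First I would establish the analogue of \Cref{propNil2}: given algebras $B$, $X$ in $\Nil_2(\ACAlg)$ and bilinear maps $l\colon B\times X\to X$, $r\colon X\times B\to X$, the algebra $(B\oplus X,\cdot)$ built as in \eqref{product on semi direct} lies in $\Nil_2(\ACAlg)$ if and only if $r=-l$ --- which is exactly what anti-commutativity of the semidirect product demands once $B$ and $X$ are themselves anti-commutative --- together with the identities $b\ast(xx')=(b\ast x)x'=0$ and $(bb')\ast x=b\ast(b'\ast x)=0$ for all $b,b'\in B$ and $x,x'\in X$. By \Cref{lemma derived action} this amounts to checking the defining identities of the variety on $B\oplus X$; since the identities involving only factors from $B$ or only factors from $X$ hold because $B,X\in\Nil_2(\ACAlg)$, only the finitely many \emph{mixed} instances (one factor from $B$ and two from $X$, or two from $B$ and one from $X$, in all bracketings) need to be verified, which is routine. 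The conditions $b\ast(xx')=0=(b\ast x)x'$ say precisely that each $b\ast-$ lies in $[X]_2$, and --- viewing $[X]_2$ as an abelian algebra --- the conditions $(bb')\ast x=0$ and $b\ast(b'\ast x)=0$ say that $b\mapsto b\ast-$ is an algebra homomorphism $B\to[X]_2$ satisfying \eqref{Nil_2Lie}; conversely any such homomorphism, paired with $r=-l$, yields a derived action. Combining this with the correspondence between isomorphism classes of split extensions and derived actions (\Cref{lemma derived action} and \Cref{rem_b*}) establishes part~(1).

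For part~(2) I would set $\tau_B$ to be the map sending the class of a split extension of $B$ by $X$ to the homomorphism $b\mapsto b\ast-$ of part~(1). Naturality of $\tau$ is checked as in the proof of \Cref{Nil_2com}: for $f\colon B'\to B$ the relevant square in $\Set$ commutes because precomposing a derived action with $f$ corresponds to precomposing its associated homomorphism with $f$. Each $\tau_B$ is injective because an isomorphism class of split extensions is determined by its derived action, hence by $b\mapsto b\ast-$. Therefore $\tau\colon\SplExt(-,X)\rightarrowtail\Hom_{\Nil_2(\ACAlg)}(-,[X]_2)$ is a monomorphism of functors, and since $[X]_2$ equipped with the zero multiplication is an abelian algebra --- hence trivially an object of $\Nil_2(\ACAlg)$ --- it serves as a weak actor of $X$, so $\Nil_2(\ACAlg)$ is weakly action representable. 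Part~(3) is then immediate: a homomorphism $\varphi\colon B\to[X]_2$ lies in the image of $\tau_B$, i.e.\ is an acting morphism, exactly when it arises from a split extension, which by part~(1) happens precisely when it satisfies \eqref{Nil_2Lie}.

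The only step that needs genuine attention is the first one: the correct enumeration of the identities to impose on $B\oplus X$, together with the observation that here anti-commutativity forces $r=-l$ rather than $r=l$ as in the commutative case. Once this bookkeeping is in place everything else is a faithful adaptation of the argument for \Cref{Nil_2com}; in particular there is no difficulty with the choice of weak actor, since an abelian algebra automatically satisfies both anti-commutativity and two-step nilpotency.
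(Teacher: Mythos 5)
Your proposal is correct and follows exactly the route the paper intends: the paper omits the proof precisely because it is the sign-adjusted transcription of \Cref{propNil2} and \Cref{Nil_2com}, which is what you carry out, including the key observation that anti-commutativity of $B\ltimes X$ forces $r=-l$ and that $[X]_2$ with zero multiplication lies in $\Nil_2(\ACAlg)$.
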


Again, if $B=\F\{b,b'\}$ is the abelian two-dimensional algebra and $X=\F$ is the abelian one-dimensional algebra, the linear map $\varphi \colon B \rightarrow [X]_2=\End(X)$, defined by $\varphi(b)=\varphi(b')=1_X$ is an example of a morphism in $\Nil_2(\ACAlg)$ which is not an acting morphism.

\section{Representability of actions of non-associative algebras}\label{Section General Representability}

We want to extend the results obtained in the previous section by studying the (weak) representability of actions of a general variety of non-associative algebras over a field~$\F$. Again, we assume that $\mathcal{V}$ is an action accessible, operadic variety of non-associative algebras over~$\F$. Thus $\cV$ satisfies a set of multilinear identities
\[
	\Phi_{k,i}(x_1,\ldots,x_k)=0, \quad i=1,\ldots,n,
\]
where $k$ is the degree of the polynomial $\Phi_{k,i}$. We fix $\lambda_1$, \dots, $\lambda_8$, $\mu_1$, \dots, $\mu_8 \in \F$ which determine a choice of $\lambda/\mu$ rules, i.e.\
\begin{align*}
	x(yz)=\lambda_1(xy)z & +\lambda_2(yx)z+\lambda_3z(xy) + \lambda_4 z(yx)                        \\
	                     & + \lambda_5 (xz)y + \lambda_6 (zx)y + \lambda_7 y(xz) + \lambda_8 y(zx)
	\intertext{and}
	(yz)x=\mu_1(xy)z     & +\mu_2(yx)z+\mu_3z(xy) + \mu_4 z(yx)                                    \\
	                     & + \mu_5 (xz)y + \mu_6 (zx)y + \mu_7 y(xz) + \mu_8 y(zx)
\end{align*}
which are identities in $\mathcal{V}$. Note that these are not unique, but fixed for our purposes.

For any object $X$ of $\cV$, we want to define a vector space $\E(X)$ such that
\[
	\Inn(X) \leq \E(X) \leq \End(X)^2,
\]
where $\Inn(X)=\lbrace (L_x, R_x) \mid x \in X \rbrace$ is the vector space of left and right multiplications of $X$, and we want to endow it with a bilinear partial operation
\[
	\langle -,- \rangle \colon \Omega \subseteq X \times X \rightarrow X,
\]
such that we can associate in a natural way a homomorphism of partial algebras~$B \rightarrow \E(X)$, with every split extension of $B$ by $X$ in $\cV$. To do this, we describe derived actions in $\V$ in a similar fashion as in the previous section.

\begin{proposition}\label{prop_semidirect}
	Let $X$ and $B$ be two algebras in $\V$. Given a pair of bilinear maps
	\[
		l\colon B \times X \rightarrow X, \qquad r \colon X \times B \rightarrow X,
	\]
	we construct $(B \oplus X,\cdot)$ as in \eqref{product on semi direct}. Then $(B \oplus X, \cdot)$ is an object of $\V$ if and only if
	\[
		\Phi_{k,i}(\alpha_1,\ldots,\alpha_k)=0, \quad \forall i=1,\ldots,n,
	\]
	where at least one of the $\alpha_1$, \dots, $\alpha_k$ is an element of of the form $(0,x)$, with~$x \in X$, and the others are of the form $(b,0)$, with $b \in B$. The resulting algebra is the \emph{semi-direct product} of $B$ and $X$, denoted by $B \ltimes X$.\noproof
\end{proposition}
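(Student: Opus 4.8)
The plan is to exploit that $\V$ is operadic, so that an algebra belongs to $\V$ precisely when it satisfies each of the multilinear identities $\Phi_{k,i}$; thus $(B\oplus X,\cdot)$ is an object of $\V$ if and only if $\Phi_{k,i}(\gamma_1,\dots,\gamma_k)=0$ for every $i\in\{1,\dots,n\}$ and every choice of $\gamma_1$, \dots, $\gamma_k\in B\oplus X$. The forward implication is then immediate, since the tuples appearing in the statement (those with at least one entry of the form $(0,x)$ and the remaining entries of the form $(b,0)$) are particular such tuples. The whole content lies in the converse, whose key tool is the multilinearity of each $\Phi_{k,i}$ combined with the interaction of the multiplication~\eqref{product on semi direct} with the two canonical subspaces $B\times\{0\}$ and $\{0\}\times X$ of $B\oplus X$.

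First I would record two elementary facts that follow from the bilinearity of $l$ and $r$: one has $(b,0)\cdot(b',0)=(bb',0)$ and $(0,x)\cdot(0,x')=(0,xx')$, because $l(b,0)=l(0,x')=r(x,0)=r(0,b')=0$. Hence $b\mapsto(b,0)$ is an injective algebra homomorphism $B\to B\oplus X$ whose image $B\times\{0\}$ is a subalgebra isomorphic to $B$ (and, dually, $\{0\}\times X$ is a subalgebra---in fact an ideal---isomorphic to $X$). In particular, since $B$ is an object of $\V$, every $\Phi_{k,i}$ vanishes on each tuple all of whose entries lie in $B\times\{0\}$.

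Next, given an arbitrary tuple $(b_1,x_1)$, \dots, $(b_k,x_k)$ of elements of $B\oplus X$, I would write $(b_j,x_j)=(b_j,0)+(0,x_j)$ and use that $\Phi_{k,i}$, being a linear combination of degree-$k$ monomials, is $k$-linear as a map $(B\oplus X)^{k}\to B\oplus X$. Expanding gives
\[
	\Phi_{k,i}\bigl((b_1,x_1),\dots,(b_k,x_k)\bigr)=\sum_{S\subseteq\{1,\dots,k\}}\Phi_{k,i}\bigl(\gamma^{S}_1,\dots,\gamma^{S}_k\bigr),
\]
where $\gamma^{S}_j=(0,x_j)$ if $j\in S$ and $\gamma^{S}_j=(b_j,0)$ otherwise. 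The summand indexed by $S=\varnothing$ vanishes by the previous paragraph, while every summand indexed by a nonempty $S$ is exactly one of the expressions assumed to be zero in the statement. Hence the whole sum is zero, for each $i$ and each tuple, and by the operadic reduction $(B\oplus X,\cdot)\in\V$; together with \Cref{lemma derived action} this identifies it with the semi-direct product $B\ltimes X$.

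I do not expect any genuine obstacle: the only point requiring (minor) care is the bookkeeping of the $2^{k}$-term multilinear expansion, and the observation that $B\times\{0\}$ is a subalgebra isomorphic to $B$, so that the $S=\varnothing$ summand is automatically controlled by the hypothesis $B\in\V$ rather than needing to be listed separately. In effect the argument is the multilinear reformulation of the computation behind \Cref{lemma derived action}, reorganised so as to isolate precisely the ``mixed'' identities that remain to be verified.
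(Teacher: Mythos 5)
Your argument is correct and supplies exactly the multilinear-expansion reasoning that the paper leaves implicit (the proposition is stated with \emph{\noindent no proof}, as the evident analogue of \Cref{lemma derived action} for an operadic variety). The decomposition $(b_j,x_j)=(b_j,0)+(0,x_j)$, the $2^k$-term expansion of each $\Phi_{k,i}$, and the observation that the $S=\varnothing$ summand vanishes because $B\times\{0\}$ is a subalgebra isomorphic to $B$ are precisely the steps needed, so there is nothing to object to.
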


Using the same notation of \Cref{rem_b*}, we obtain the following:

\begin{corollary}
	When every identity of $\cV$ can be deduced from the $\lambda/\mu$ rules, $(B \oplus X, \cdot)$ is an object of $\cV$ if and only if
	\begin{enumerate}
		\item $b \ast (xx')=\lambda_1 (b \ast x)x'+ \cdots + \lambda_8 x (x' \ast b)$;
		\item $(xx') \ast b=\mu_1 (b \ast x)x'+ \cdots + \mu_8 x (x' \ast b)$;
		\item $x(x' \ast b)=\lambda_1 (xx') \ast b + \cdots + \lambda_8 x' (b \ast x)$;
		\item $(x' \ast b)x=\mu_1 (xx') \ast b + \cdots+\mu_8 x' (b \ast x)$;
		\item $x(b \ast x')=\lambda_1 (x \ast b)x' + \cdots +\lambda_8 b \ast (x'x)$;
		\item $(b \ast x')x=\mu_1 (x \ast b)x' + \cdots +\mu_8 b \ast (x'x)$;
		\item $x \ast (bb')=\lambda_1 (x \ast b) \ast b' + \cdots + \lambda_8 b \ast (b' \ast x)$;
		\item $(bb') \ast x=\mu_1 (x \ast b) \ast b' + \cdots + \mu_8 b \ast (b' \ast x)$;
		\item $b \ast (b' \ast x)=\lambda_1 (bb') \ast x + \cdots + \lambda_8 b' \ast (x \ast b)$;
		\item $(b' \ast x) \ast b = \mu_1 (bb') \ast x + \cdots + \mu_8 b' \ast (x \ast b)$;
		\item $b \ast (x \ast b')= \lambda_1 (b \ast x) \ast b' + \cdots + \lambda_8 x \ast (b'b)$;
		\item $(x \ast b) \ast b'= \mu_1 (b \ast x) \ast b' + \cdots + \mu_8 x \ast (b'b)$,	\end{enumerate}
	for all $b$, $b' \in B$ and $x$, $x' \in X$.\noproof
\end{corollary}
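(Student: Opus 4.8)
The plan is to derive this from \Cref{prop_semidirect} by a multilinearity argument. Since $\V$ is action accessible, \Cref{Theorem AC iff Orzech} guarantees that the two $\lambda/\mu$-rules are identities of $\V$, and by the hypothesis of the corollary every identity of $\V$ is, conversely, a consequence of them; hence $\V$ is exactly the variety defined by the $\lambda/\mu$-rules. Writing $A\coloneqq(B\oplus X,\cdot)$ for the vector space $B\oplus X$ equipped with the multiplication~\eqref{product on semi direct}, it follows that $A\in\V$ if and only if $A$ satisfies both $\lambda/\mu$-rules.

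The next step is to observe that the $\lambda/\mu$-rules are multilinear of degree $3$, so each of them holds in $A$ precisely when it holds for all substitutions of its three variables by elements of the two vector-space summands $B\oplus 0$ and $0\oplus X$ of $A$. Since the multiplication~\eqref{product on semi direct} restricts to $(b,0)(b',0)=(bb',0)$ and $(0,x)(0,x')=(0,xx')$ on these summands, $B\oplus 0$ and $0\oplus X$ are subalgebras of $A$ isomorphic, respectively, to $B$ and to $X$; as both lie in $\V$, the substitutions in which all three arguments belong to $B\oplus 0$, or all three belong to $0\oplus X$, hold automatically. Thus $A\in\V$ if and only if both $\lambda/\mu$-rules hold on every \emph{mixed} triple, that is, a triple having at least one argument of the form $(0,x)$ and at least one of the form $(b,0)$ --- which is precisely the criterion that \Cref{prop_semidirect} provides once the defining polynomials $\Phi_{k,i}$ are replaced by the $\lambda/\mu$-rules.

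It remains to translate each mixed substitution into the $\ast$-notation of \Cref{rem_b*}. For a multilinear identity in $x$, $y$, $z$ there are $\binom{3}{1}+\binom{3}{2}=6$ mixed substitution patterns, one for each nonempty proper subset of $\{x,y,z\}$ sent into $0\oplus X$; with the two rules this gives the $12$ equations. Performing the substitution in~\eqref{product on semi direct} and abbreviating $(b,0)(0,x)=(0,b\ast x)$, $(0,x)(b,0)=(0,x\ast b)$, $(b,0)(b',0)=(bb',0)$ and $(0,x)(0,x')=(0,xx')$, one reads off the $X$-component of each side: the $\lambda$-rule under $x\mapsto b$, $y\mapsto x$, $z\mapsto x'$ becomes equation~(1), under $x\mapsto x$, $y\mapsto b$, $z\mapsto b'$ it becomes~(7), and its four remaining patterns give~(3), (5), (9) and~(11); the six patterns of the $\mu$-rule give~(2), (4), (6), (8), (10) and~(12). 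Since $b$, $b'\in B$ and $x$, $x'\in X$ are arbitrary, nothing further is required. The only genuine work is this enumeration --- matching each of the twelve substitution patterns to the correct equation and keeping track of the sides and the ordering of the eight terms on each right-hand side --- and I expect that bookkeeping, rather than any conceptual point, to be the main obstacle.
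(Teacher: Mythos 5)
Your argument is correct and is exactly the route the paper intends: the corollary is left without proof precisely because it is the specialisation of \Cref{prop_semidirect} to the case where the defining identities $\Phi_{k,i}$ are the two (multilinear, degree-$3$) $\lambda/\mu$-rules, and your enumeration of the six mixed substitution patterns per rule, together with the observation that the all-$B$ and all-$X$ substitutions hold automatically because $B\oplus 0$ and $0\oplus X$ are subalgebras lying in $\V$, matches the twelve listed equations. Nothing is missing.
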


\begin{definition}\label{E(X)}
	For every object $X$ of $\mathcal{V}$, we define $\E(X)$ as the subspace of all pairs $(f\ast-,-\ast f) \in \End(X)^2$ satisfying
	\[
		\Phi_{k,i}(\alpha_1,\ldots,\alpha_k)=0, \quad \forall i=1,\ldots,n,
	\]
	for each choice of $\alpha_j=f$ and $\alpha_t \in X$, where $t\neq j\in \{1,\ldots,k\}$ and $fx\coloneq f\ast x$, $xf\coloneq x\ast f$. We endow it with the bilinear map $\langle -,- \rangle \colon \E(X) \times \E(X) \rightarrow \End(X)^2$
	\[
		\langle (f\ast-,-\ast f),(g \ast -, - \ast g) \rangle =(h \ast -,- \ast h),
	\]
	where
	\begin{align*}
		x \ast h=\lambda_1(x\ast f)\ast g & +\lambda_2(f\ast x)\ast g+\lambda_3g\ast(x\ast f) + \lambda_4 g \ast(f\ast x)                                   \\
		                                  & + \lambda_5 (x\ast g)\ast f + \lambda_6 (g\ast x)\ast f + \lambda_7 f\ast (x\ast g) + \lambda_8 f\ast (g\ast x)
		\intertext{and}
		h \ast x=\mu_1(x\ast f)\ast g     & +\mu_2(f\ast x)\ast g+\mu_3g\ast(x\ast f) + \mu_4 g\ast(f\ast x)                                                \\
		                                  & + \mu_5 (x\ast g)\ast f + \mu_6 (g\ast x)\ast f + \mu_7 f\ast (x\ast g) + \mu_8 f\ast (g\ast x).
	\end{align*}
\end{definition}

When every identity of $\cV$ is a consequence of the $\lambda/\mu$ rules, $\E(X)$ becomes the subspace of all pairs $(f\ast-,-\ast f) \in \End(X)^2$ satisfying
\begin{enumerate}
	\item $f \ast (xx')=\lambda_1 (f \ast x)x'+ \cdots + \lambda_8 x (x' \ast f)$;
	\item $(xx') \ast f=\mu_1 (f \ast x)x'+ \cdots + \mu_8 x (x' \ast f)$;
	\item $x(x' \ast f)=\lambda_1 (xx') \ast f + \cdots + \lambda_8 x' (f \ast x)$;
	\item $(x' \ast f)x=\mu_1 (xx') \ast f + \cdots + \mu_8 x' (f \ast x)$;
	\item $x(f \ast x')=\lambda_1 (x \ast f)x' + \cdots + \lambda_8 f \ast (x'x)$;
	\item $(f \ast x')x=\mu_1 (x \ast f)x' + \cdots + \mu_8 f \ast (x'x)$,
\end{enumerate}
for every $x$, $x' \in X$.

Note that the choice of $\lambda/\mu$ rules does not affect to the definition of the underlying vector space of $\E(X)$, but it does play an important role in the bilinear map $\langle -,- \rangle$.
In general, the vector space $\E(X)$ endowed with the bilinear map $\langle -,- \rangle$ is not an object of $\cV$. It may happen that $\langle -,- \rangle$ does not even define a bilinear operation on $\E(X)$, i.e.\ there exist $(f\ast-,-\ast f)$,$(g\ast-,-\ast g) \in \E(X)$ such that
\[
	\langle (f\ast-,-\ast f),(g\ast-,-\ast g)\rangle \not\in \E(X)
\]
or that $(\E(X),\langle -,- \rangle)$ is a non-associative algebra which does not satisfy some identity of $\cV$.

\begin{example}\label{Example associative}
	We may check that, if $\mathcal{V}=\AAlg$, then $\E(X)\cong\Bim(X)$ as vector spaces. Moreover, with the standard choice of $\lambda/\mu$ rules $\lambda_1 = \mu_8 = 1$ and the rest equal to zero, it is also an isomorphism of associative algebras.
\end{example}

\begin{example}
	Let $\mathcal{V}=\LeibAlg$, it is easy to see that $\E(X) \cong \Bider(X)$ as vector spaces. Choosing the $\lambda/\mu$ rules as
	\begin{align*}
		x(yz) & =(xy)z - (xz)y, \\
		(yz)x & =(yx)z - y(xz),
	\end{align*}
	we get the standard multiplication defined in~$\Bider(X)$ as in~\cite{loday1993version}, that defines a weak actor in $\LeibAlg$. On the other hand, choosing the $\lambda/\mu$ rules as
	\begin{align*}
		x(yz) & =(xy)z - (xz)y, \\
		(yz)x & =(yx)z + y(zx),
	\end{align*}
	we get the non-associative algebra structure defined in~\cite[Definition~5.2]{Casas}, which, in general, is not a Leibniz algebra.
\end{example}

\begin{example}\label{Example Nilk}
	If $\mathcal{V}=\Nil_k(\AAlg)$, with $k \geq 3$, then
	\[
		\E(X)=\lbrace (f \ast -, - \ast f) \in \Bim(X) \mid f \ast (x_1 \cdots x_k)=(x_1 \cdots x_k) \ast f =0 \rbrace.
	\]
	With the same choice of $\lambda/\mu$ rules as in~\Cref{Example associative}, the bilinear operation $\langle -,- \rangle$ becomes
	\[
		\langle (f\ast-,-\ast f),(g\ast-,-\ast g) \rangle = (f \ast (g \ast -), (- \ast f) \ast g)
	\]
	which makes $\E(X)$ an associative algebra, but not a $k$-step nilpotent algebra. For instance, let $X$ be the abelian one-dimensional algebra, then
	\[
		\E(X)=\End(X) \times \End(X)^{\op} \cong \F^2
	\]
	which is not nilpotent. Indeed, every linear endomorphism of~$X$ is of the form $\varphi_\alpha \colon x \mapsto \alpha x$, for some $\alpha \in \F$ and
	\[
		\langle (\varphi_\alpha,\varphi_\beta),(\varphi_{\alpha'},\varphi_{\beta'}) \rangle = (\varphi_\alpha \circ \varphi_{\alpha'},\varphi_{\beta'} \circ \varphi_\beta)=(\varphi_{\alpha \alpha'},\varphi_{\beta \beta'}).
	\]
\end{example}

\begin{example}\label{exalt}
	If $\cV=\Alt$ is the variety of alternative algebras over a field $\F$ with $\bchar(\F) \neq 2$, then $\E(X)$ consists of the pairs $(f \ast -, - \ast f)  \in \End(X)^2$ satisfying
	\[
		f \ast (xy) = (x \ast f) y + (f \ast x) y -x (f \ast y),
	\]
	\[
		(xy) \ast f = x (f \ast y) + x (y \ast f) - (x \ast f) y,
	\]
	\[
		x (y \ast f) = (yx) \ast f + (xy) \ast f - y (x \ast f)
	\]
	and
	\[
		(f \ast x) y = f \ast (yx) + f \ast (xy) - (f \ast y) x
	\]
	for any $x,y \in X$, and the bilinear map
	\[
		\langle (f\ast-,-\ast f),(g \ast -, - \ast g) \rangle =(h \ast -,- \ast h)
	\]
	is given by
	\[
		h \ast x=-(f \ast x) \ast g + f \ast (g \ast x) + f \ast (x \ast g)
	\]
	and
	\[
		x \ast h = (x \ast f) \ast g + (f \ast x) \ast g - f \ast (x \ast g).
	\]
	One can check that $\langle -,- \rangle $ does not define an algebra structure. Nevertheless, it is possible to find examples where $\E(X)$ is an alternative algebra.

	For instance, if $X$ is a \emph{unitary}, or \emph{unital}, alternative algebra (i.e.\ there exists an element $e \in X$ such that $xe=ex=x$, for any $x \in X$), such as the algebra of octonions $\mathbb{O}$, then the elements of $\E(X)$ satisfy the following set of equations
	\begin{align*}
		f \ast x = & (x \ast f) e + (f \ast x) e -x (f \ast e),  \\
		x \ast f = & e (f \ast x) + e (x \ast f) - (e \ast f) x, \\
		x \ast f = & x \ast f + x \ast f - x (e \ast f),         \\
		f \ast x = & f \ast x + f \ast x - (f \ast e) x,
	\end{align*}
	for any $x \in X$. Thus, if $\alpha \coloneqq f \ast e$ and $\beta \coloneqq e \ast f$, one has
	\[
		f \ast x = \alpha x = \beta x, \qquad x \ast f = x \alpha = x \beta
	\]
	and, for $x = e$, one obtains $\alpha=\beta$. In other words, an element of $\E(X)$ is uniquely determined by an element $\alpha=f \ast e = e \ast f$ of $X$, i.e.
	\[
		\E(X) \cong \lbrace (\alpha,\alpha)  \mid \alpha \in X \rbrace \cong X
	\]
	is an object of $\Alt$.
\end{example}

\begin{remark}\label{unitalassociative}
	The same result can be obtained for unitary algebras in the variety $\AAlg$. In fact, let $X$ be a unitary associative algebra and let $(f \ast -, - \ast f) \in \Bim(X)$. Thus
	\[
		f \ast x = f \ast (e x) = \alpha x,
	\]
	\[
		x \ast f = (x e) \ast f = x \beta
	\]
	and
	\[
		x \alpha = (x \ast f) e = x \beta,
	\]
	where $\alpha \coloneqq f \ast e$ and $\beta \coloneqq e \ast f$. For $x=e$, we obtain $\alpha=\beta$ and
	\[
		\Bim(X) \cong \lbrace (\alpha,\alpha) \mid \alpha \in X \rbrace \cong X.
	\]
	Since unitary algebras are perfect and have trivial center, from \cite{Casas} we have a natural isomorphism
	\[
		\SplExt(-,X) \cong \Hom_{\AAlg}(-,X)
	\]
	for any unitary associative algebra $X$, i.e.\ $X$ is its own actor.
\end{remark}

The construction of $\E(X)$ gives rise to an alternative characterisation of the split extensions in $\cV$. In fact, a split extension of $B$ by $X$ in $\mathcal{V}$ is the same as a linear map
\[
	B \rightarrow \E(X)\colon b \mapsto (b \ast -,-\ast b),
\]
such that $((bb')\ast-,-\ast(bb'))=\langle (b\ast -,-\ast b),(b' \ast -,- \ast b')\rangle$ and
\[
	\Phi_{k,i}(\alpha_1,\ldots,\alpha_k)=0, \quad i=1,\ldots,n,
\]
where $\alpha_1$, \dots, $\alpha_k$ are as in \Cref{prop_semidirect}.

We remark also that the bilinear map
\[ \langle -,- \rangle \colon \E(X) \times \E(X) \rightarrow \End(X)^2 \]
defines a partial operation $\langle -,- \rangle \colon \Omega \rightarrow \E(X)$, where $\Omega$ is the preimage
\[
	\langle -,- \rangle^{-1}(\E(X))
\]
of the inclusion $\E(X) \hookrightarrow \End(X)^2$.

Now we are ready to announce and prove our main result about the weak representability of actions of non-associative algebras.

\begin{theorem}\label{thmV}
	Let $\mathcal{V}$ be an action accessible, operadic variety of non-associative algebras over a field~$\F$ and let $U \colon \cV \rightarrow \PAlg$ denote the forgetful functor.
	\begin{enumerate}
		\item Let $X$ be an object of $\mathcal{V}$. There exists a monomorphism of functors
		      \[
			      \tau \colon \SplExt(-,X) \rightarrowtail \Hom_{\PAlg}(U(-),\E(X)),
		      \]
		      where $\SplExt(-,X)=\SplExt_\cV(-,X)$ and, for every object $B$ of $\cV$, $\tau_B$ is the injection which sends an element of $\SplExt(B,X)$ to the corresponding partial algebra homomorphism
		      \[
			      B \rightarrow \E(X)\colon b \mapsto (b \ast -, - \ast b).
		      \]
		\item Let $B$, $X$ be objects of $\cV$. The homomorphism of partial algebras
		      \[
			      B \rightarrow \E(X)\colon b \mapsto (b \ast -,- \ast b)
		      \]
		      belongs to $\Imm(\tau_B)$ if and only if $\Phi_{k,i}(\alpha_1,\ldots,\alpha_k)=0$, as in~\Cref{prop_semidirect}.
		\item If $(\E(X),\langle -,- \rangle)$ is an object of $\mathcal{V}$, then $(\E(X),\tau)$ becomes a weak representation of $\SplExt(-,X)$.
		\item If the diagram in $\V$ given by those subalgebras of $\E(X)$ which occur as a codomain of a morphism $(B \rightarrow \E(X)) \in \Imm(\tau_B)$ admits an amalgam in $\V$, then the colimit of that diagram determines an (initial) weak representation of $\SplExt(-,X)$.
		\item If $\cV$ is a variety of commutative or anti-commutative algebras, then $\E(X)$ is isomorphic to the partial algebra
		      \[
			      \lbrace f  \in \End(X) \mid \Phi_{k,i}(f,x_2,\ldots,x_k)=0, \; \forall x_2,\ldots,x_k \in X \rbrace
		      \]
		      endowed with the bilinear partial operation $\langle f, g \rangle = \alpha (f \circ g) + \beta (g \circ f)$, where $\alpha$, $\beta \in \F$ are given by the $\lambda/\mu$ rules.
	\end{enumerate}
\end{theorem}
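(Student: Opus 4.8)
The plan is to construct the natural transformation $\tau$ directly from \Cref{prop_semidirect} and then read off (1)--(5) one by one.

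\emph{Parts (1) and (2).} Given a split extension of $B$ by $X$ in $\cV$, I would take its derived action $(l,r)$ as in \Cref{Rem:SplitV} and put $\varphi(b)\coloneqq(b\ast-,-\ast b)=(l(b,-),r(-,b))\in\End(X)^{2}$. By \Cref{lemma derived action} the semi-direct product $B\ltimes X$ lies in $\cV$, so by \Cref{prop_semidirect} the polynomial $\Phi_{k,i}$ vanishes in $B\ltimes X$ on every tuple with at least one entry $(0,x)$ and the remaining entries of the form $(b,0)$. Evaluating on the tuple whose $j$-th entry is $(b,0)$ and whose other entries are elements $(0,x_{t})$ of $X$, and using $(b,0)\cdot(0,x)=(0,b\ast x)$, $(0,x)\cdot(b,0)=(0,x\ast b)$ together with the fact that once the $b$-slot has acted the remaining products live in $X$, one obtains exactly the defining identity of $\E(X)$ for the pair $\varphi(b)$; hence $\varphi(b)\in\E(X)$. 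Likewise, evaluating the two fixed $\lambda/\mu$-rules on $\bigl((0,x),(b,0),(b',0)\bigr)$ produces precisely the two formulas of \Cref{E(X)} for $\langle\varphi(b),\varphi(b')\rangle$, with left-hand sides $x\ast(bb')$ and $(bb')\ast x$, so that $\langle\varphi(b),\varphi(b')\rangle$ is defined and equals $\varphi(bb')\in\E(X)$; thus $\varphi$ is a homomorphism of partial algebras and I set $\tau_{B}(\xi)=\varphi$. The map $\tau_{B}$ is injective because a split extension is determined up to isomorphism by its derived action (\Cref{rem_b*}), i.e.\ by $\varphi$; it is natural because change of base along $f\colon B'\to B$ amounts to precomposition of derived actions with $f$. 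Hence $\tau$ is a monomorphism of functors, which is~(1). For~(2): conversely, a partial-algebra homomorphism $\varphi\colon B\to\E(X)$ determines, via $l(b,x)\coloneqq b\ast x$ and $r(x,b)\coloneqq x\ast b$, a candidate derived action, and by \Cref{lemma derived action} and \Cref{prop_semidirect} it is a genuine one --- equivalently $\varphi\in\Imm(\tau_{B})$ --- exactly when $\Phi_{k,i}(\alpha_{1},\dots,\alpha_{k})=0$ for all the mixed tuples of \Cref{prop_semidirect}; the conditions forcing each $\varphi(b)$ to lie in $\E(X)$ and the one expressing $\langle\varphi(b),\varphi(b')\rangle=\varphi(bb')$ hold automatically, so the content of~(2) lies in the remaining conditions.

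\emph{Parts (3) and (4).} If $(\E(X),\langle-,-\rangle)$ is a total algebra lying in $\cV$, then $U(\E(X))$ is total, and for $B\in\cV$ a partial-algebra homomorphism $U(B)\to U(\E(X))$ is exactly an algebra homomorphism $B\to\E(X)$; since $\cV$ is full in $\NAlg$, this identifies $\Hom_{\PAlg}(U(-),\E(X))$ with $\Hom_{\cV}(-,\E(X))$ naturally, so $\tau$ becomes a monomorphism $\SplExt(-,X)\rightarrowtail\Hom_{\cV}(-,\E(X))$ and $(\E(X),\tau)$ is a weak representation, which is~(3). For~(4) I would argue as in \Cref{Proposition Amalgam}: for $\varphi\in\Imm(\tau_{B})$ the image $\varphi(B)\leq\E(X)$ is closed under $\langle-,-\rangle$ (because $\langle\varphi(b),\varphi(b')\rangle=\varphi(bb')$) and, being a quotient of $B$, is an object of $\cV$; let $\mathcal D$ be the diagram of all such subalgebras of $\E(X)$ with their inclusions. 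Factor $\tau_{B}(\xi)$ as a surjection $B\twoheadrightarrow M_{\xi}$ onto its image followed by $M_{\xi}\hookrightarrow\E(X)$. If $\mathcal D$ admits an amalgam $T$ in $\cV$, i.e.\ its colimit cocone $(\mu_{M}\colon M\to T)_{M}$ is monic, set $\tau'_{B}(\xi)\coloneqq\mu_{M_{\xi}}\circ(B\twoheadrightarrow M_{\xi})$; naturality of $\tau'$ follows from that of $\tau$ and of the cocone, and injectivity of each $\tau'_{B}$ follows --- exactly as in \Cref{Proposition Amalgam} --- from uniqueness of image factorisations, thinness of $\mathcal D$, and injectivity of $\tau_{B}$. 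Then $(T,\tau')$ is a weak representation, and since $T$ is a colimit it is an initial one (cf.\ \Cref{reminitial}).

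\emph{Part (5).} Suppose $xy=\varepsilon yx$ in $\cV$ with $\varepsilon=\pm1$. Then any derived action satisfies $x\ast b=i(x)s(b)=\varepsilon\, s(b)i(x)=\varepsilon(b\ast x)$, so every element of $\E(X)$ has the form $(f\ast-,\varepsilon f\ast-)$ and the first projection embeds $\E(X)$ linearly into $\End(X)$. Substituting $-\ast f=\varepsilon(f\ast-)$ turns the defining identities of $\E(X)$ into conditions on $f\in\End(X)$ and products in $X$ only; using (anti-)commutativity and the reduction of the $\lambda/\mu$-rules to $x(yz)=\alpha(xy)z+\beta(xz)y$ recalled in \Cref{SecComm}, the conditions obtained by placing $f$ in the various slots of each generating identity all reduce to the single one with $f$ in the first slot (as one checks directly for associativity in $\CAAlg$ or the Jacobi identity in $\JJAlg$), so the image of $\E(X)$ in $\End(X)$ is $\{f\in\End(X)\mid\Phi_{k,i}(f,x_{2},\dots,x_{k})=0\ \forall x_{2},\dots,x_{k}\}$. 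Finally, with the $\lambda/\mu$-rules chosen so that only $\lambda_{1}=\alpha$ and $\lambda_{5}=\beta$ are nonzero (and the $\mu_{i}$ determined accordingly), and using $u\ast g=\varepsilon(g\ast u)$ for $u\in X$, the formula of \Cref{E(X)} for $\langle-,-\rangle$ collapses to $\langle f,g\rangle=\alpha(f\circ g)+\beta(g\circ f)$, which is the asserted partial operation.

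\emph{Main obstacle.} The genuinely delicate part is the term-by-term bookkeeping in~(1) and~(5): matching the substitutions of \Cref{prop_semidirect} inside $B\ltimes X$ with the defining identities of $\E(X)$ and with the formula for $\langle-,-\rangle$, and, in~(5), verifying that in the (anti-)commutative case it is enough to place $f$ in the first slot (and that the $\varepsilon$-signs and the order of composition come out as stated). In~(4) the only subtlety, inherited verbatim from \Cref{Proposition Amalgam}, is the thinness argument that secures injectivity of $\tau'_{B}$.
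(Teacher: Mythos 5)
Your proposal is correct and follows essentially the same route as the paper: the same construction of $\tau_B$ from derived actions via \Cref{prop_semidirect}, the same reduction of (4) to the argument of \Cref{Proposition Amalgam}, and the same isomorphism $f\mapsto(f,\pm f)$ in (5). If anything, you supply more detail than the published proof does (notably the bookkeeping showing $\varphi(b)\in\E(X)$ and $\langle\varphi(b),\varphi(b')\rangle=\varphi(bb')$, and the reduction of the slot conditions in the (anti-)commutative case), which the paper leaves implicit in the discussion preceding the theorem.
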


Because of these results, we can give the following definitions.

\begin{definition}
	Let $X$ be an object of an action accessible, operadic variety of non-associative algebras $\cV$ with a choice of $\lambda/\mu$ rules. The partial algebra $\E(X)$ is called \emph{external weak actor} of $X$. The pair $(\E(X),\tau)$ is called \emph{external weak representation} of the functor $\SplExt(-,X)$. When $\tau$ is a natural isomorphism, we say that $\E(X)$ is an \emph{external actor} of $X$.
\end{definition}

\begin{proof}
	(1) The collection $\lbrace \tau_B \rbrace_B$ gives rise to a natural transformation since, for every algebra homomorphism $f\colon B' \rightarrow B$, the diagram in $\Set$
	\[
		\begin{tikzcd}
			\SplExt(B,X) \arrow[r, "\tau_{B}"] \arrow[d, "{\SplExt(f,X)}"]
			& \Hom(U(B),\E(X)) \arrow[d,"{\Hom(U(f),\E(X))}"]\\
			\SplExt(B',X) \arrow[r, "\tau_{B'}"] & \Hom(U(B'),\E(X))
		\end{tikzcd}
	\]
	where $\Hom(U(-),-)=\Hom_{\PAlg}(U(-),-)$, is commutative.	Moreover, for every object $B$ of $\cV$, the map $\tau_{B}$ is an injection, since every element of $\SplExt(B,X)$ is uniquely determined by the corresponding derived action of $B$ on $X$, i.e.\ by the pair of bilinear maps
	\[
		l \colon B \times X \rightarrow X,  \qquad r \colon X \times B \rightarrow X
	\]
	defined as in~\Cref{Rem:SplitV}. Thus $\tau$ is a monomorphism of functors.

	(2) Let $B$, $X$ be objects of $\cV$. A homomorphism of partial algebras $B \rightarrow \E(X)$ belongs to $\Imm(\tau_B)$ if and only if it defines a split extension of $B$ by $X$ in $\cV$. This is equivalent to saying that
	\[
		\Phi_{k,i}(\alpha_1,\ldots,\alpha_k)=0, \quad \forall i=1,\ldots,n,
	\]
	where $\alpha_1,\ldots,\alpha_k$ are as in \Cref{prop_semidirect}

	(3) If $(\E(X),\langle -,- \rangle)$ is an object of $\cV$, then we have a monomorphism of functors
	\[
		\tau \colon \SplExt(-,X) \rightarrowtail \Hom_{\cV}(-,\E(X)),
	\]
	and $(\E(X),\tau)$ is a weak representation of $\SplExt(-,X)$.

	(4) We may copy the ``if'' part of the proof of \Cref{Proposition Amalgam}, replacing the subalgebras of $\M(X)$ in $\CAAlg$ with those subalgebras of $\E(X)$ in $\V$ which occur as codomain of a morphism $(B \rightarrow \E(X)) \in \Imm(\tau_B)$. As in \Cref{reminitial}, by its construction as a colimit, the weak representation thus obtained is automatically an initial weak representation \cite{WAR}.

	(5) If $\cV$ is a variety of commutative (resp.\ anti-commutative) algebras, then for every object $X$ of $\cV$, $\E(X)$ consists of pairs of the form $(f \ast -, - \ast f)$ with $x \ast f = f \ast x$ (resp.\ $x \ast f = -f \ast x$), for every $x \in X$. Thus, an explicit isomorphism
	\[
		\lbrace f \in \End(X) \mid \Phi_{k,i}(f,x_2,\ldots,x_k)=0\rbrace \rightarrow \E(X)
	\]
	is given by $f \mapsto (f, \pm f)$.
\end{proof}

\begin{example}
	We may check that, with the \emph{obvious} choices of the $\lambda /\mu$ rules,
	\begin{enumerate}
		\item if $\mathcal{V}=\AbAlg$, then $\E(X) =0$ is the actor of $X$;
		\item if $\mathcal{V}=\CAAlg$, then $\E(X) \cong \M(X)$ is an external actor of $X$ (see \Cref{RemCAAlg});
		\item if $\cV=\JJAlg$, then as observed in \Cref{thmJJAlg}, the external actor $\E(X)$ is isomorphic to the partial algebra $\ADer(X)$ of anti-derivations of $X$;
		\item if $\mathcal{V}=\LieAlg$, then $\E(X)\cong \Der(X)$ is the actor of $X$;
		\item if $\mathcal{V}=\ACAntiAAlg$, then as observed in \Cref{thmAnti}, the external actor $\E(X)$ is isomorphic to the partial algebra $[X]$;
		\item if $\mathcal{V}=\Nil_2(\CAlg)$ or $\mathcal{V}=\Nil_2(\ACAlg)$, then $\E(X) \cong [X]_2$ is a weak actor of $X$;
		\item if $\cV=\Nil_2(\NAlg)$, then $\E(X)$ is an abelian algebra and it is a weak actor of $X$;
		\item if $\cV=\Alt$ over a field $\F$ with $\bchar(\F)\neq 2$ and $X$ is a unitary alternative algebra, then $\E(X) \cong X$ is an alternative algebra and we have a natural isomorphism
		      \[
			      \SplExt(-,X) \cong \Hom_{\Alt}(-,X)
		      \]
		      i.e.\ $X$ is the actor of itself. In particular, the algebra of octonions $\mathbb{O}$ has representable actions.
	\end{enumerate}
\end{example}

\begin{remark}
	The construction of the vector space $\E(X)$ can be done also in a variety of non-associative algebras $\cV$ which is not action accessible. However, there is no canonical way to endow $\E(X)$ with a bilinear map $\langle -,- \rangle$ as in~\Cref{E(X)} so we only have a monomorphism of functors
	\[
		\tau \colon \SplExt(-,X) \rightarrow \Hom_{\Vect}(U(-),\E(X)),
	\]
	where $U \colon \cV \rightarrow \Vect$ denotes the forgetful functor.
\end{remark}

\begin{remark}
	As described in \cite[Section~3]{CigoliManciniMetere}, for every Orzech category of interest~$\mathcal{C}$ and for every object $X$ of $\mathcal{C}$, it is possible to define a monomorphism of functors
	\[
		\mu \colon \SplExt(-,X) \rightarrowtail \Hom_{\mathcal{C}'}(V(-),\USGA(X)),
	\]
	where $\mathcal{C}'$ is a category which contains $\mathcal{C}$ as a full subcategory,$\USGA(X)$ is an object of $\mathcal{C}'$ called the \emph{universal strict general actor} of $X$ \cite{Casas} and $V \colon \C \rightarrow \C'$ denotes the forgetful functor. We further recall that $\USGA(X)$ is unique up to isomorphism, once the presentation of the Orzech category of interest $\C$ is fixed.

	For a variety of non-associative algebras $\cV$, a presentation is given by a choice of constants $\lambda_1$, \dots, $\lambda_8$, $\mu_1$, \dots, $\mu_8 \in \F$ which determine the $\lambda/\mu$ rules. In this case, it turns out that $\cV'=\NAlg$. Thus we have monomorphism of functors
	\[
		\mu \colon \SplExt(-,X) \rightarrowtail \Hom_{\NAlg}(V(-),\USGA(X))
	\]
	and, by~\Cref{thmV}, another monomorphism of functors
	\[
		\tau \colon \SplExt(-,X) \rightarrowtail \Hom_{\PAlg}(U(-),\E(X)).
	\]
	As explained at the beginning of \cite[Section 4]{Casas}, $\USGA(X)$ is the algebraic closure of the external weak actor $\E(X)$ with respect to the bilinear partial operation $\langle -,- \rangle$. When $\langle -,- \rangle$ is well defined on $\E(X) \times \E(X)$, then $\USGA(X)=\E(X)$ and $\mu=\tau$.
\end{remark}

However, it is often more convenient to work with the external weak actor $\E(X)$, since it is easier to construct than the universal strict general actor $\USGA(X)$. In fact, in the next section we shall present the construction of $\E(X)$ in different varieties of non-associative algebras.

\section{The quadratic case}\label{Section Quadratic}

In this section we introduce a systematic approach to finding the explicit structure of $\E(X)$ in the setting of operadic, quadratic varieties of algebras. Here we shall denote an element $(f\ast-,-\ast f)$ of $\E(X)$ by the symbol $f$; this means that $fx\coloneq f\ast x$ and $xf\coloneq x\ast f$.

Let $\cV$ be an action accessible, operadic, quadratic variety of non-associative algebras with no identities of degree $2$. Let us consider the free non-associative algebra generated by the symbols $f$, $x$ and $y$, and let us focus on its multilinear component of degree~$3$. There are $12$ possible monomials which we order as follows:
\begin{align*}
	f(xy) & > f(yx) > (xy)f > (yx)f
	> (fy)x > (fx)y                 \\ & > (yf)x > (xf)y
	> x(fy) > y(fx) > x(yf) > y(xf).
\end{align*}
Permuting the variables determines an action of the symmetric group~$\SSS_3$ on this space. For a given variety of algebras~$\cV$, we can write the orbit under the $\SSS_3$-action of its defining equations in matrix form, where each row corresponds to an equation and each column corresponds to a monomial, ordered as above. Let us denote this matrix by $M_3$, and its reduced row echelon form by $RM_3$. Action accessibility implies the following:

\begin{lemma}
	The rank of $M_3$ is at least $4$. Moreover, the $4 \times 4$ minor located on the top left of $RM_3$ is the identity matrix. \noproof
\end{lemma}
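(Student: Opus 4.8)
The statement concerns the $\SSS_3$-orbit matrix $M_3$ of the defining degree-$3$ equations of an action accessible, operadic, quadratic variety $\cV$ with no degree-$2$ identities, written in the monomial basis $f(xy) > f(yx) > (xy)f > (yx)f > (fy)x > (fx)y > (yf)x > (xf)y > x(fy) > y(fx) > x(yf) > y(xf)$ of the multilinear degree-$3$ component. The first four columns correspond to the monomials $f(xy)$, $f(yx)$, $(xy)f$, $(yx)f$; I claim these are exactly the ``reducible'' monomials governed by the $\lambda/\mu$-rules of \Cref{Theorem AC iff Orzech}. First I would recall that action accessibility gives us, for each triple of variables, the two identities
\begin{align*}
f(xy)&=\lambda_1(fx)y+\lambda_2(xf)y+\lambda_3y(fx)+\lambda_4y(xf)+\lambda_5(fy)x+\lambda_6(yf)x+\lambda_7x(fy)+\lambda_8x(yf),\\
(xy)f&=\mu_1(fx)y+\mu_2(xf)y+\mu_3y(fx)+\mu_4y(xf)+\mu_5(fy)x+\mu_6(yf)x+\mu_7x(fy)+\mu_8x(yf),
\end{align*}
obtained by substituting $x\mapsto f$, $y\mapsto x$, $z\mapsto y$ into the $\lambda/\mu$-rules. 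Swapping the roles of $x$ and $y$ produces analogous expressions for $f(yx)$ and $(yx)f$. These four identities are $\F$-linear combinations of the $\SSS_3$-orbit of the defining equations (since the $\lambda/\mu$-rules themselves are consequences of the identities of $\cV$, and hence of their orbit), so they lie in the row space of $M_3$.

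**Key steps.** The heart of the argument is that these four identities, read as rows, have a triangular shape on the first four columns. Indeed the identity for $f(xy)$ has coefficient $1$ in column $1$ and $0$ in columns $2,3,4$ (its right-hand side involves only the eight ``irreducible'' monomials $(fy)x$ through $y(xf)$); similarly the identity for $f(yx)$ is $0,1,0,0$ on the first four columns, the one for $(xy)f$ is $0,0,1,0$, and the one for $(yx)f$ is $0,0,0,1$. Thus within the row space of $M_3$ there are four vectors forming, on the first four coordinates, the $4\times 4$ identity matrix. This immediately gives $\operatorname{rank}(M_3)\ge 4$. For the statement about $RM_3$: since those four reduction identities lie in the row space and exhibit pivots in columns $1,2,3,4$, Gaussian elimination on $M_3$ must produce pivots in exactly columns $1,2,3,4$ among the first four columns — no earlier pivot is possible because column $1$ is the first column, and once we pivot in column $1$ the row-echelon process forces a pivot in column $2$ (the $f(yx)$-reduction survives with a leading $1$ there after clearing column $1$), and likewise for columns $3$ and $4$. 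Hence the top-left $4\times 4$ block of $RM_3$ is the identity.

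**The main obstacle.** The subtle point, and the one I would spell out carefully, is \emph{why the first four pivots of $RM_3$ land precisely in columns $1$–$4$ rather than being ``eaten'' by a relation among those columns coming from the degree-$2$-free hypothesis}. Concretely: one must check that no nonzero linear combination of the orbit rows is supported only on columns $5$–$12$ and yet forces, via reduction, a collapse making some of columns $1$–$4$ non-pivotal. The cleanest way around this is the observation that the four reduction identities are genuinely present as rows of the row space with the stated triangular pattern, so the rank of the first four columns of $M_3$ is exactly $4$ and these columns are linearly independent in the column space; combined with the fact (to be justified from ``no identities of degree $2$'', which rules out collapses like $f(xy)=0$ that would move a pivot) that the reduced echelon form cannot have a pivot strictly to the left of column $1$, we conclude the top-left minor is $I_4$. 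The rest — that the remaining rows of $RM_3$ are supported on columns $\ge 5$ — is automatic from the definition of reduced row echelon form once the first four pivots are located. I expect the bookkeeping of which monomial goes to which column under the $\SSS_3$-action, and verifying the triangular pattern above, to be the only genuinely computational part; it is routine but must be done with the chosen ordering in hand.
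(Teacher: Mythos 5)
Your proposal is correct and is exactly the argument the paper intends (the lemma is stated with no written proof, but the preceding sentence ``Action accessibility implies the following'' points to precisely your reasoning): substituting into the $\lambda/\mu$-rules of \Cref{Theorem AC iff Orzech} produces four multilinear degree-$3$ identities expressing $f(xy)$, $f(yx)$, $(xy)f$, $(yx)f$ in terms of the last eight monomials, and since the variety is operadic and quadratic with no degree-$2$ identities these lie in the row space of $M_3$, giving four row-space vectors whose restriction to the first four coordinates is $I_4$; hence those columns are linearly independent, all four are pivot columns, and the top-left block of $RM_3$ is the identity. The only blemish is the ``main obstacle'' paragraph, where the worry about a pivot ``strictly to the left of column $1$'' is vacuous and the no-degree-$2$-identities hypothesis is invoked for the wrong step (it is needed to know the $\SSS_3$-orbit spans \emph{all} multilinear degree-$3$ identities, not to protect the pivot positions), but this does not affect the validity of the argument.
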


The vector space $\E(X)$ will be the subspace of $\End(X)^2$ formed by the pairs that satisfy the identities coming from $RM_3$.

Our task now is to endow this vector space with a partial multiplication, induced by action accessibility, and to provide strategies to check
\begin{enumerate}
	\item when this multiplication is total;
	\item when it induces a $\cV$-algebra structure on $\E(X)$.
\end{enumerate}

Let us rename the tags on the columns of $M_3$ by the following rule: $f \mapsto x$, $x \mapsto f$ and $y \mapsto g$. Then, the third and first columns of $RM_3$ will give us equations of the form
\begin{align*}
	(fg)x = \lambda_1 (fg)x & + \lambda_2 (fx)g + \lambda_3 (gf)x + \lambda_4 (xf)g                   \\
	                        & + \lambda_5 x(fg) + \lambda_6 y(fx) + \lambda_7 x(gf) + \lambda_8 g(xf) \\
	\intertext{and}
	x(fg) = \mu_1 (fg)x     & + \mu_2 (fx)g + \mu_3 (gf)x + \mu_4 (xf)g                               \\
	                        & + \mu_5 x(fg) + \mu_6 g(fx) + \mu_7 x(gf) + \mu_8 g(xf).
\end{align*}
At a first glance, these rules seem to yield a way of multiplying two elements $f$ and~$g$ belonging to $\E(X)$. However, this choice might not be unique. If the rank of $M_3$ is strictly larger than $4$, the lower rows will have zeroes in the first four positions, so adding any linear combination of them will produce a new bracket in~$\E(X)$.
Let us exemplify this with a concrete variety:

\begin{example}\label{Leibniz}
	The most common presentation of the variety of right Leibniz algebras is given by the identity $(xy)z - (xz)y - x(yz) = 0$. Then, $M_3$ will be the matrix
	\[
		\resizebox{\textwidth}{!}{$
				\begin{blockarray}{cccccccccccc}
					f(xy) & f(yx) & (xy)f & (yx)f & (fy)x & (fx)y & (yf)x & (xf)y & x(fy) & y(fx) & x(yf) & y(xf)
					\\
					\begin{block}{(cccccccccccc)}
						-1 & 0 & 0 & 0 & -1 & 1 & 0 & 0 & 0 & 0 & 0 & 0 \\
						0 & -1 & 0 & 0 & 1 & -1 & 0 & 0 & 0 & 0 & 0 & 0 \\
						0 & 0 & -1 & 0 & 0 & 0 & 0 & 1 & -1 & 0 & 0 & 0 \\
						0 & 0 & 1 & 0 & 0 & 0 & 0 & -1 & 0 & 0 & -1 & 0 \\
						0 & 0 & 0 & -1 & 0 & 0 & 1 & 0 & 0 & -1 & 0 & 0 \\
						0 & 0 & 0 & 1 & 0 & 0 & -1 & 0 & 0 & 0 & 0 & -1\\
					\end{block}
				\end{blockarray}
			$}
	\]
	while its reduced row echelon form is
	\[
		\resizebox{\textwidth}{!}{$
				\begin{blockarray}{cccccccccccc}
					f(xy) & f(yx) & (xy)f & (yx)f & (fy)x & (fx)y & (yf)x & (xf)y & x(fy) & y(fx) & x(yf) & y(xf) \\
					\begin{block}{(cccccccccccc)}
						1 & 0 & 0 & 0 & 1 & -1 & 0 & 0 & 0 & 0 & 0 & 0 \\
						0 & 1 & 0 & 0 & -1 & 1 & 0 & 0 & 0 & 0 & 0 & 0 \\
						0 & 0 & 1 & 0 & 0 & 0 & 0 & -1 & 0 & 0 & -1 & 0 \\
						0 & 0 & 0 & 1 & 0 & 0 & -1 & 0 & 0 & 0 & 0 & -1 \\
						0 & 0 & 0 & 0 & 0 & 0 & 0 & 0 & 1 & 0 & 1 & 0 \\
						0 & 0 & 0 & 0 & 0 & 0 & 0 & 0 & 0 & 1 & 0 & 1\\
					\end{block}
				\end{blockarray}
			$}
	\]
	Removing the rows in odd position---which we are entitled to, thanks to the obvious symmetry---we obtain that~$\E(X)$ is formed by the elements of $\End(X)^2$ satisfying the following identities:
	\begin{equation}\label{Leibeqs}
		\begin{aligned}
			f(xy) & = (fx)y - (fy)x \\
			(xy)f & = (xf)y + x(yf) \\
			x(fy) & = x(yf)
		\end{aligned}
	\end{equation}
	These are exactly the identities satisfied by biderivations.
	With the change of tag in the columns described before, we obtain
	\[
		\resizebox{\textwidth}{!}{$
				\begin{blockarray}{cccccccccccc}
					x(fg) & x(gf) & (fg)x & (gf)x & (xg)f & (xf)g & (gx)f & (fx)g & f(xg) & g(xf) & f(gx) & g(fx) \\
					\begin{block}{(cccccccccccc)}
						1 & 0 & 0 & 0 & 1 & -1 & 0 & 0 & 0 & 0 & 0 & 0 \\
						0 & 1 & 0 & 0 & -1 & 1 & 0 & 0 & 0 & 0 & 0 & 0 \\
						0 & 0 & 1 & 0 & 0 & 0 & 0 & -1 & 0 & 0 & -1 & 0 \\
						0 & 0 & 0 & 1 & 0 & 0 & -1 & 0 & 0 & 0 & 0 & -1 \\
						0 & 0 & 0 & 0 & 0 & 0 & 0 & 0 & 1 & 0 & 1 & 0 \\
						0 & 0 & 0 & 0 & 0 & 0 & 0 & 0 & 0 & 1 & 0 & 1\\
					\end{block}
				\end{blockarray}
			$}
	\]
	Therefore, the multiplication
	\begin{equation}\label{LeibMulti}
		\begin{aligned}
			(fg)x = (fx)g + f(gx) + \alpha_1 \big( f(xg) + f(gx) \big) + \alpha_2 \big( g(xf) + g(fx) \big) \\
			x(fg) = (xf)g - (xg)f + \beta_1 \big( f(xg) + f(gx) \big) + \beta_2 \big( g(xf) + g(fx) \big)
		\end{aligned}
	\end{equation}
	induces a partial algebra structure on $\E(X)$, for any choice of $\alpha_1, \alpha_2, \beta_1, \beta_2 \in \F$.
\end{example}

Now that we have a partial algebra structure induced on a general $\E(X)$, the next step is to verify when it is total. To do so, we have to focus on a partial subset of the set of consequences of degree~$4$. Let us consider the $120$-dimensional space formed by the multilinear monomials of degree~$4$ in the free non-associative algebra generated by the symbols $f, g, x, y$.
To gather all the consequences of the identities in degree~$3$, we have three different ways of operating. Let us take any identity from~$RM_3$.
The first way is to multiply it from the right or from the left by~$g$.
The second way, is to substitute $x$ by $(gx)$ or $(xg)$. Finally, we can substitute~$y$ by~$(gy)$ or by $(yg)$.
Doing all these substitutions together with the permutations of $f$ and~$g$, we obtain all the consequences. Note that in none of these identities the terms $(fg)$ or $(gf)$ will appear.

Now we need to check if the defining bracket satisfies the identities of $\E(X)$. To do so, we take again the identities from $RM_3$, substitute $f$ by $(fg)$ and expand it by the already defined product. The bracket will be closed if and only if these new obtained equations are linear combination of the previously obtained consequences.

To conclude, we shall check when the bracket satisfies the identities of the variety. This can be done just by directly substituting elements of $\E(X)$ in the defining equations of the variety. After applying them to a generic element $x$, once on the left and once on the right, it is a matter of substituting the bracket on $\E(X)$ when necessary.

\begin{example}
	Continuing with the Leibniz algebras example~\ref{Leibniz}, applying the procedure described before to the first equation in~\eqref{Leibeqs} yields:
	\begin{align*}
		g(f(xy)) & = g((fx)y) - g((fy)x), \\
		(f(xy))g & = ((fx)y)g - ((fy)x)g, \\
		f((gx)y) & = (f(gx))y - (fy)(gx), \\
		f((xg)y) & = (f(xg))y - (fy)(xg), \\
		f(x(gy)) & = (fx)(gy) - (f(gy))x, \\
		f(x(yg)) & = (fx)(yg) - (f(yg))x.
	\end{align*}
	It is a straightforward computation to check that the rank of the matrix formed by all the consequences is~$72$. Then, we have to compare it with the multiplication defined in~\Cref{LeibMulti}. For instance, taking again the first equation in~\Cref{Leibeqs} we have to expand the identity
	\[
		(fg)(xy) = ((fg)x)y - ((fg)y)x,
	\]
	which gives us
	\begin{align*}
		f(g(xy))+(f(xy))g + \alpha_1 \big(f(g(xy)) + f((xy)g)\big) + \alpha_2 \big(g(f(xy)) + g((xy)f)\big)   \\
		= (f(gx))y+((fx)g)y + \alpha_1 \big((f(gx))y + (f(xg))y\big) + \alpha_2 \big((g(fx))y + (g(xf))y\big) \\
		- (f(gy))x+((fy)g)x + \alpha_1 \big((f(gy))x + (f(yg))x\big) + \alpha_2 \big((g(fy))x + (g(yf))x\big).
	\end{align*}
	After a linear algebra computation it can be checked that no matter which $\alpha_1$ and~$\alpha_2$ we choose, it belongs to the subspace generated by the consequences. In fact, this will be true for all the identities~\eqref{Leibeqs}, so any $\alpha_1$, $\alpha_2$, $\beta_1$, $\beta_2 \in \F$ will produce a total multiplication on $\E(X)$.

	To check whether the induced bracket endows $\E(X)$ with a Leibniz algebra structure, we just need to check when the following identities hold
	\begin{align*}
		(f(gh))x & = ((fg)h)x - ((fh)g)x  \\
		x(f(gh)) & = x((fg)h) - x((fh)g).
	\end{align*}
	A quick computation tells us that this is only true when $( \alpha_1, \alpha_2, \beta_1, \beta_2 ) = (1, 0, 0, 0)$, so that we recover exactly the multiplication defined in~\cite[Definition~5.1]{Casas}.
\end{example}

We consider some further examples.

\begin{example}
	In the case of associative algebras any choice of bracket will induce a total algebra structure, but only the already known example of bimultipliers will be an associative algebra.
\end{example}

\begin{example}
	The variety of \emph{symmetric Leibniz algebras} is formed by the intersection between the varieties of right and left Leibniz algebras, i.e.\ the variety determined by $(xy)z-(xz)y-x(yz)=0$ (right Leibniz identity) and $z(xy)- (xz)y-x(zy)=0 $ (left Leibniz identity). The space generated by its bilinear identities of degree~$3$ has dimension~$10$, which means that there are $12$ parameters to define a product in $\E(X)$. With the help of a computer algebra system such as \texttt{Macaulay2}~\cite{M2} we check that any choice will give us a total algebra structure, and the set of variables that induces a symmetric Leibniz algebra structure on~$\E(X)$ forms an affine variety of dimension~$2$.
\end{example}

\begin{example}
	Following the algorithm proposed before, it can be checked easily that the variety of two-step nilpotent (non-commutative) algebras is weakly action representable. In fact, a weak actor may be given by the expected structure
	\[
		\E(X) = \{ f \in \End(X)^2 \mid f(xy)=(xy)f = 0 =(fx)y=x(yf)\}
	\]
	with product $fg= 0=gf$. Nevertheless, this is not the only product that can be induced. Since the space generated by its bilinear identities of degree~$3$ has maximum dimension~$12$, there are $16$ parameters that can be taken into account to define a product in $\E(X)$. All of them induce a total multiplication on it, and the set of parameters which induce a two-step nilpotent algebra on $\E(X)$ forms an affine variety of dimension~$3$. Note that these algebras were studied and classified in~\cite{LaRosaMancini1, LaRosaMancini2, LaRosaMancini3}.
\end{example}

\begin{example}
	Although commutative Poisson algebras are usually defined as a variety with two operations, in~\cite{MaRe} it was shown that with the depolarisation technique they can be seen as a quadratic variety (with one operation), so they fit in the scope of this section. The algorithmic approach presented before shows that it is possible to induce several total algebra structures on $\E(X)$, more precisely it gives rise to a $3$-parametric family.
\end{example}

\begin{example}\label{Example Novikov}
	The varieties of Novikov algebras or anti-associative algebras do not allow a total algebra structure on their respective $\E(X)$ induced by action accessibility, but it is still an open problem whether or not these varieties are weakly action representable.
\end{example}

\section{Open Questions and further directions}\label{Section Questions}

\subsection*{Converse of the implication ``\textit{weakly action representable category} $\Rightarrow$ \textit{action accessible category}''}

We studied the representability of actions of a general operadic variety of non-associative algebras over a field but we were not able to find an example of an action accessible variety which is not weakly action representable. Does the converse of the implication
\[
	\textit{weakly action representable category} \Rightarrow \textit{action accessible category}
\]
hold in this context?

\subsection*{Subvarieties}
We do not know how the condition (WRA) behaves under taking subvarieties (especially in the non-quadratic case, when the degree of the identities may be higher than $3$). For instance, we know that the variety $\AAlg$ is weakly action representable, but we do not know whether the subvariety $\Nil_k(\AAlg)$ for $k\geq 3$ satisfies the same condition. We recall that in this case, $\E(X)$ is an associative algebra, but it is not $k$-step nilpotent in general (see \Cref{Example Nilk}).

\subsection*{Initial weak representation}
As already mentioned in \Cref{reminitial}, a variety of non-associative algebras $\V$ is weakly action representable if and only if it is \emph{initially weakly action representable}, which means that for every object $X$, the functor $\SplExt(-,X)$ admits an initial weak representation. We do not know whether or not the weak representations that occur in this article when the external weak actor $\E(X)$ is an object of $\cV$ (such as for Leibniz algebras or associative algebras) are initial, or how we would check this in practice.

\subsection*{Representability of actions of \emph{unitary} algebras}

In the recent article \cite{IdeallyExact}, G.~Janelidze introduced the notion of \emph{ideally exact} category, with the aim of generalising semi-abelian categories in a way which includes relevant examples of \emph{non-pointed} categories, such as the categories $\textbf{Ring}$ and $\textbf{CRing}$ of (commutative) rings with unit.

A category $\C$ is \emph{ideally exact} when it is Barr-exact and Bourn-protomodular with finite coproducts, such that the unique morphism $0 \to 1$ in $\C$ is a regular epimorphism. Thus, semi-abelian categories are precisely the pointed ideally exact categories.

G.\ Janelidze also extended the notions of action representability and weak action representability to ideally exact categories, showing $\textbf{Ring}$ and $\textbf{CRing}$ are action representable, with the actor of a (commutative) unitary ring $X$ being isomorphic to~$X$ itself. We do not recall the construction here, since it is essentially the same as the one for alternative algebras with unit given in \Cref{exalt}.

We recall that a variety of non-associative algebras $\cV$ is said to be \emph{unitary closed} if for any object $X$ in it, the algebra $\tilde{X}$ spanned by $X$ and the element $1$, equipped with the multiplication $x \cdot 1 = 1 \cdot x = x$ for any $x \in X$, is still an object of~$\cV$. For instance, $\AAlg$ and $\Alt$ are unitary closed, and the category $\LeibAlg$, or any variety of anti-commutative algebras over a field of characteristic different from $2$, such as $\LieAlg$, are examples of varieties which are not. Thus, the condition of being unitary closed depends on the set of identities which determine the variety $\cV$.

When a variety of algebras $\cV$ is unitary closed, one can consider the subcategory~$\cV_1$ of unitary algebras of $\cV$ with the arrows being the algebra morphisms of~$\cV$ that preserve the unit. Of course, $\cV_1$ is an ideally-exact category and it is not pointed.

Examples \ref{exalt} and \ref{unitalassociative} suggest that for a unitary closed variety $\V$, one may use the construction of the external weak actor $\E(X)$ to study the representability of actions of the subcategory $\V_1$. For instance, it follows easily that $\Alt_1$ and $\AAlg_1$ are action representable, with the actor of an object $X$ in both cases being the object $X$ itself.

\section*{Acknowledgements}
First we would like to thank the anonymous referee, whose careful reading and invaluable comments helped us correct a crucial mistake in the first version of the text.

We are grateful to Abdenacer Makhlouf for recommending us to study the representability of actions of Jacobi--Jordan algebras, to Giuseppe Metere for suggesting us the name \emph{external weak actor}, and to Gabor P.\ Nagy for helping us with finding an explicit example of a $2$-Engel Lie algebra in characteristic $3$ which is not two-step nilpotent.

We would like to express our sincere gratitude to the Institut de Recherche en Mathématique et Physique (IRMP) for the warm reception we received during our visits to Louvain-la-Neuve. We would also like to extend our heartfelt appreciation to the Universities of Santiago de Compostela and Vigo for the generous support and welcoming atmosphere provided during our time there.


\providecommand{\noopsort}[1]{}
\providecommand{\bysame}{\leavevmode\hbox to3em{\hrulefill}\thinspace}
\providecommand{\MR}{\relax\ifhmode\unskip\space\fi MR }
\providecommand{\MRhref}[2]{%
	\href{http://www.ams.org/mathscinet-getitem?mr=#1}{#2}
}
\providecommand{\href}[2]{#2}

\end{document}